\newtheorem{lemma}{Lemma}
\newtheorem{theorem}{Theorem}
\newtheorem{corollary}{Corollary}
\newtheorem{claim}{Claim}
\newtheorem{conjecture}{Conjecture}
\newcommand{\dss}{\displaystyle\sum}
\newcommand{\lp}{\left(}
\newcommand{\rp}{\right)}
\newcommand{\cx}{{\bf x}}
\newcommand{\cz}{{\bf z}}
\newcommand{\vol}{{\rm Vol}}
\DeclarePairedDelimiter{\ceil}{\lceil}{\rceil}
\DeclarePairedDelimiter{\floor}{\lfloor}{\rfloor}
\newcommand{\bx}{\mathbf{x}}
\newcommand{\one}{\mathbf{1}}
\title{On the maximum spread of planar and outerplanar graphs}
\author{
Zelong Li, \thanks{University of California, Los Angeles, Los Angeles, CA. ({\tt lizelong831@g.ucla.edu}).
 The author is partially supported by NSF DMS 2038080 grant through a summer REU program. }
\and 
William Linz, \thanks{University of South Carolina, Columbia, SC. ({\tt wlinz@mailbox.sc.edu}). The author is partially supported by NSF DMS 2038080 grant.}
\and Linyuan Lu, \thanks{University of South Carolina, Columbia, SC. ({\tt lu@math.sc.edu}). The author is partially supported by NSF DMS 2038080 grant.}
\and Zhiyu Wang \thanks{Georgia Institute of Technology, Atlanta, GA.
({\tt zwang672@gatech.edu}).}
}
\begin{document}

\maketitle

\abstract{
The spread of a graph $G$ is the difference between the largest and smallest eigenvalue of the adjacency matrix of $G$. Gotshall, O'Brien and Tait conjectured that for sufficiently large $n$, the $n$-vertex outerplanar graph with maximum spread is the graph obtained by joining a vertex to a path on $n-1$ vertices. In this paper, we disprove this conjecture by showing that the extremal graph is the graph obtained by joining a vertex to a path on $\ceil{(2n-1)/3}$ vertices and $\floor{(n-2)/3}$ isolated vertices. For planar graphs, we show that the extremal $n$-vertex planar graph attaining the maximum spread is the graph obtained by joining two nonadjacent vertices to a path on $\ceil{(2n-2)/3}$ vertices and $\floor{(n-4)/3}$ isolated vertices.
}

\section{Introduction}

Given a square matrix $M$, the \textit{spread} of $M$, denoted by $S(M)$, is defined as $S(M):= \max_{i,j} |\lambda_i -\lambda_j|$, where the maximum is taken over all pairs of eigenvalues of $M$. In other words, $S(M)$ is the diameter of the spectrum of $M$.
Given a graph $G=(V,E)$, the \textit{spread} of $G$, denoted by $S(G)$, is defined as the spread of the adjacency matrix $A(G)$ of $G$. Let $\lambda_1(G) \geq \cdots \geq \lambda_n(G)$ be the eigenvalues of $A(G)$. Since $A(G)$ is a real symmetric matrix, we have that the $\lambda_i$s are all real numbers. Thus $S(G) = \lambda_1 -\lambda_n$.

The systematic study of the spread of graphs was initiated by Gregory, Hershkowitz, and Kirkland \cite{GHK2001}. One of the central focuses of this area is to find the maximum or minimum spread over a fixed family of graphs and characterize the extremal graphs. Problems of such extremal flavor have been investigated for trees~\cite{AP2015}, graphs with few cycles~\cite{FWG2008, PBA2009, Wu-Shu2010}, the family of all $n$-vertex graphs~\cite{Aouchiche2008, BRTU2021+, Riasanovsky2021,Stanic2015, Stevanovic2014, Urschel2021}, the family of bipartite graphs~\cite{BRTU2021+}, graphs with a given matching number~\cite{LZZ2007}, girth~\cite{WZS2013}, or size~\cite{Liu-Liu2009}, and very recently for the family of outerplanar graphs~\cite{GBT2022}. We note that the spreads of other matrices associated with a graph have also been extensively studied (see e.g. references in \cite{GBT2022, Cao-Vince93, Cvetkovic-Rowlinson90}).

A graph $G$ is {\em planar} if it can be embedded in the plane, \textit{i.e.}, if it can be drawn on the plane in such a way that edges intersect only at their endpoints. A graph is {\em outerplanar} if it can be embedded in the plane so that all vertices lie on the boundary of its outer face. By Wagner's Theorem, a graph is planar if and only if it does not contain $K_5$ nor $K_{3,3}$ as a minor. Analogously, a graph is outerplanar if and only if it does not contain $K_4$ nor $K_{2,3}$ as a minor.
In this paper, we call an outerplanar graph $G$ {\em linear} if it contains a vertex $u$ that is adjacent to all other vertices. Here we call $u$ the {\em center} vertex of $G$. Similarly, we call a planar graph $G$ {\em linear} if it contains two vertices $u$ and $w$ that are each adjacent to all vertices of $V(G)\backslash\{u,w\}$, and call $u,w$ the {\em center} vertices of $G$. 
A graph $F$ is called a \textit{linear forest} if $F$ is a disjoint union of paths. 

Given two graphs $G$ and $H$, the \textit{join} of $G$ and $H$, denoted by $G\vee H$, is the graph obtained from the disjoint union of $G$ and $H$ by connecting every vertex of $G$ with every vertex of $H$. Let $P_k$ denote the path on $k$ vertices. Given two graphs $G$ and $H$, let $G\cup H$ denote the disjoint union of $G$ and $H$. Given a graph $G$ and a positive integer $k$, we use $kG$ to denote the disjoint union of $k$ copies of $G$. Given $v \subseteq V(G)$, let $N_G(v)$ denote the set of neighbors of $v$ in $G$, and let $d_G(v)$ denote the degree of $v$ in $G$, i.e., $d_G(v) = |N(v)|$. Given $S\subseteq V(G)$, define $N_G(S)$ as $N_G(S) = \{N_G(v):v\in S\}$. We may ignore the subscript $G$ when there is no ambiguity. 

There has been extensive research on finding the maximum spectral radius (i.e. the largest eigenvalue) of planar and outerplanar (hyper)graphs and the corresponding extremal (hyper)graphs; see, for example,~\cite{Boots-Royle91, Cao-Vince93, Cvetkovic-Rowlinson90, Dvorak-Mohar2010,Ellingham-Zha00, Yuan-Shu1999, Lin-Ning19, RF-Schwenk78, Rowlinson90, You-Liu2012, Yuan88, Yuan95, Yuan98, Ellingham-Lu-Wang22}.
In \cite{GBT2022}, Gotshall, O'Brien and Tait studied the maximum spread of outerplanar graphs and narrowed down the structure of the extremal graph attaining the maximum spread.

\begin{theorem}\cite{GBT2022}
For sufficiently large $n$, any graph which maximizes the spread over the family of outerplanar graphs on $n$ vertices is of the form $K_1\vee F$, where $F$ is a linear forest with $\Omega(n)$ edges.
\end{theorem}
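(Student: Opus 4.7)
My plan is to prove the theorem in three stages: (i) establish coarse eigenvalue bounds on any extremal outerplanar graph, (ii) run a local edge-swapping argument to extract a universal vertex, and (iii) deduce the linear-forest structure from outerplanarity and the bound $|E(F)| = \Omega(n)$ via a finer spectral comparison.

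First, since the star $K_{1,n-1}$ is outerplanar with spread exactly $2\sqrt{n-1}$, any extremal $G$ satisfies $S(G) \geq 2\sqrt{n-1}$. Combined with the classical upper bound $\lambda_1(G) = O(\sqrt{n})$ on the spectral radius of an outerplanar graph (of Cao--Vince and Rowlinson type), any extremal $G$ has $\lambda_1(G), -\lambda_n(G) = \Theta(\sqrt{n})$. Let $\bx, \by$ be unit eigenvectors for $\lambda_1, \lambda_n$, and let $u^*, v^*$ be vertices maximizing $|x_v|, |y_v|$. The eigenvalue equation $\lambda_1 x_{u^*} = \sum_{w \sim u^*} x_w$ together with $|x_w| \leq |x_{u^*}|$ forces $d(u^*) \geq \lambda_1 = \Omega(\sqrt{n})$, and similarly $d(v^*) = \Omega(\sqrt{n})$. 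A telescoping argument on the eigenvector equation further shows that $\bx, \by$ are concentrated at $u^*, v^*$ with $|x_{u^*}|, |y_{v^*}| = \Theta(1)$ while typical entries have magnitude $\Theta(1/\sqrt n)$.

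The central and most delicate step is showing $u^*$ is adjacent to every other vertex. I would argue by contradiction: if some $w \notin N(u^*)$ exists, construct an outerplanar $G'$ with $S(G') > S(G)$ by exchanging a carefully chosen edge $ab \in E(G)$ (not incident to $u^*$) for $u^*w$. By the Rayleigh quotient,
\[
S(G') - S(G) \geq 2(x_{u^*} x_w - x_a x_b) - 2(y_{u^*} y_w - y_a y_b),
\]
where the gain terms $|x_{u^*}x_w|, |y_{u^*}y_w|$ are of order $1/\sqrt n$ while the loss terms $|x_a x_b|, |y_a y_b|$ are $O(1/n)$ for typical $ab$. The main obstacle is guaranteeing that $G' = G - ab + u^*w$ remains outerplanar; using the outerplanar edge bound $|E(G)| \leq 2n-3$, a pigeonhole argument on $\sum_{ab \in E(G)}(|x_ax_b| + |y_ay_b|)$, and (when necessary) local modifications rerouting through a vertex near $u^*$ with negligible eigenvector mass, a suitable $ab$ can be located. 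Iterating the swap produces a universal vertex $u$, so $G = K_1 \vee F$.

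Once $G = K_1 \vee F$, outerplanarity forces $F$ to be a linear forest: any cycle in $F$ together with $u$ produces a wheel and hence a $K_4$ minor, and any vertex of degree $\geq 3$ in $F$ combined with $u$ yields a $K_{2,3}$ minor. Finally, to obtain $|E(F)| = \Omega(n)$, I would perform a finer spectral comparison: analyzing the characteristic equation $\mu = \sum_t c_t/(\mu - \nu_t)$ for eigenvalues of $K_1 \vee F$ (with $\nu_t$ the eigenvalues of $F$ and $c_t = \langle\one,\bu_t\rangle^2$) via a perturbative expansion in $1/\mu$, one relates $S(K_1 \vee F) - 2\sqrt{n-1}$ to $\one^T A_F\one = 2|E(F)|$ and to higher spectral moments of $F$. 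An explicit Chebyshev-type computation of $S(K_1 \vee P_{n-1})$ then gives a lower bound on the extremal spread that is incompatible with $|E(F)| = o(n)$, forcing the linear edge count.
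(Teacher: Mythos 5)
This statement is quoted from \cite{GBT2022}; the present paper does not reprove it, so your proposal can only be measured against the known Tait--Tobin-style argument (which is what \cite{GBT2022} uses, and which the present paper mirrors for the planar case in Lemmas \ref{lem:planarlambdan}--\ref{lem:degn-2}). Your outline has the right overall shape --- coarse bounds $\lambda_1, -\lambda_n = (1+o(1))\sqrt{n}$, eigenvector concentration, extraction of a dominant vertex by a local modification, and a first-order spectral comparison to force $\Omega(n)$ edges --- and stages (i) and the final two steps (linear forest from forbidden $K_4$/$K_{2,3}$ minors; $\Omega(n)$ edges from the $\one'A_F\one = 2|E(F)|$ term in the expansion of the spread, compared against $K_1\vee P_{n-1}$) are sound in substance, though the concentration lemma is asserted rather than proved and is itself several pages of work.

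The genuine gap is in your central step. First, the single-edge swap $G' = G - ab + u^*w$ has two problems you do not resolve. Outerplanarity is not a local condition, and no pigeonhole count over $\sum_{ab}(|x_ax_b|+|y_ay_b|)$ selects an edge $ab$ whose removal licenses the new edge $u^*w$; ``local modifications rerouting through a vertex near $u^*$'' is not an argument. More seriously, for the $\lambda_n$-part of your Rayleigh estimate you need $y_{u^*}y_w \le 0$, i.e.\ $w$ must lie on the opposite sign class of $\by$ from $u^*$, and a single-edge swap gives you no freedom to arrange this since you cannot flip the sign of $y_w$ without disturbing the contributions of $w$'s other edges. The standard repair --- used in Lemma \ref{lem:degn-2} of this paper --- is to delete \emph{all} edges at a non-neighbor $w$ of $u^*$ and attach $w$ to $u^*$ alone: this trivially preserves outerplanarity (a pendant vertex on an outerplanar graph is outerplanar), and one tests the new graph with a vector agreeing with $\by$ except that the entry at $w$ is replaced by $-|y_w|$, which makes the sign work out. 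For that modification to gain, you must first know that $w$ has $O(1)$ or $O(\sqrt n)$ neighbors each with eigenvector entry $O(1/\sqrt n)$, which in turn requires upgrading your bound $d(u^*) = \Omega(\sqrt n)$ to $d(u^*) = n - o(n)$ via the concentration estimates; your sketch never makes this connection, and without it the loss terms in the swap are uncontrolled.
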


In the same paper, Gotshall, O'Brien and Tait \cite{GBT2022} asked whether or not $F$ should be a path on $n-1$ vertices, and conjectured in the affirmative.
\begin{conjecture}\label{conj:gotconj}\cite{GBT2022}
For $n$ sufficiently large, the outerplanar graph on $n$ vertices with the maximum spread is given by $K_1 \vee P_{n-1}$.
\end{conjecture}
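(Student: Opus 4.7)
Since the abstract indicates the conjecture is false, the plan is to disprove it by solving the extremal problem within the family $K_1 \vee F$, with $F$ a linear forest on $n-1$ vertices, to which the extremum is restricted by the preceding theorem of Gotshall--O'Brien--Tait.

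The main tool is the secular equation for a join: if $(x_0,\bx)$ is an eigenvector of $K_1\vee H$ with $x_0\neq 0$, then $(\lambda I - A_H)\bx = x_0\one$ and $\lambda x_0 = \one^\top \bx$, so such eigenvalues satisfy
\[
\lambda \;=\; \one^\top(\lambda I - A_H)^{-1}\one \;=\; \sum_j \frac{(\one^\top u_j)^2}{\lambda-\mu_j},
\]
where $(\mu_j,u_j)$ runs over the eigenpairs of $A_H$; the remaining eigenvalues of $K_1\vee H$ come from eigenvectors of $A_H$ orthogonal to $\one$. Since the spectrum of any linear forest lies in $[-2,2]$ while $|\lambda_1|,|\lambda_n|\sim\sqrt{n-1}$ for large $n$, the two extremal eigenvalues of $K_1\vee H$ are precisely the extremal roots of the secular equation.

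Expanding $\one^\top(\lambda I - A_H)^{-1}\one = \sum_{r\geq 0} W_r/\lambda^{r+1}$, with $W_r = \one^\top A_H^r\one$ the number of $r$-walks in $H$, and solving $\lambda^2 = W_0 + W_1/\lambda + W_2/\lambda^2 + \cdots$ asymptotically, gives, for $F = P_k \cup m K_1$ with $m = n-1-k$ (so $W_0 = n-1$, $W_1 = 2(k-1)$, $W_2 = 4k-6$),
\[
\lambda_1 - \lambda_n \;=\; 2\sqrt{n-1} \;+\; \frac{4k-6}{(n-1)^{3/2}} \;-\; \frac{3(k-1)^2}{(n-1)^{5/2}} \;+\; o\!\left((n-1)^{-5/2}\right).
\]
The two correction terms have opposite signs, and differentiating in $k$ yields an interior optimum at $k \approx 1 + \tfrac{2}{3}(n-1)$, matching $k = \ceil{(2n-1)/3}$, $m = \floor{(n-2)/3}$ from the abstract up to integer rounding. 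In particular, the interior optimum beats the fan by approximately $\tfrac{1}{3\sqrt{n-1}}$, a strictly positive gap that disproves Conjecture~\ref{conj:gotconj}.

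The main obstacles will be: (i) because the gain over the fan is only $\Theta(1/\sqrt{n})$, the asymptotic expansion must be upgraded to a rigorous remainder estimate, which I would obtain by working directly with the exact secular equation and the closed-form $P_k$ eigenpairs rather than a pure Taylor expansion; and (ii) I must rule out linear forests $F$ other than $P_k\cup mK_1$ (for instance $P_{k-1}\cup P_2 \cup (m-1)K_1$, which has the same vertex and edge counts). For (ii) I expect a more refined local-move argument, leveraging the closed-form path spectra inside the secular equation to compare the spread of $K_1\vee F$ with that of its ``one long path plus isolated vertices'' rearrangement; this is the step I expect to be hardest, since the two candidates have the same leading spread behavior and differ only at the order of the gap that refutes the conjecture.
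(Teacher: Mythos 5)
Your proposal follows essentially the same route as the paper: the secular equation $\lambda=\one^\top(\lambda I-A_F)^{-1}\one$ is Lemma~\ref{l1} combined with the eigen-equation at the apex, the walk-count expansion with $W_1=2(k-1)$, $W_2=4k-6$ leading to the optimum of $2x-\tfrac32x^2$ at $x=\tfrac{\ell-1}{n-1}=\tfrac23$ and a $\tfrac{1}{3\sqrt{n-1}}$ gain over the fan is exactly the paper's computation, and your anticipated obstacle (ii) is handled by the paper's merge operation (Lemmas~\ref{l2} and~\ref{l3}), which only needs strict monotonicity applied to a putative extremal graph rather than the quantitative comparison you feared. The one inaccuracy is your remainder $o\left((n-1)^{-5/2}\right)$: the omitted next term is $2c_4/(n-1)^{3/2}=\Theta\left((n-1)^{-3/2}\right)$, which is harmless for refuting the conjecture (the gap $\tfrac{1}{3\sqrt{n-1}}$ dominates it) but is precisely what the paper must compute (through $c_6$, with error $O\left((n-1)^{-3}\right)$) to resolve the integer rounding and identify $\ell=\left\lceil\tfrac{2n-1}{3}\right\rceil$ exactly.
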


In this paper, we disprove this conjecture and determine the unique outerplanar graph attaining the maximum spread for sufficiently large $n$. 
\begin{theorem}\label{main}
For $n$ sufficiently large, the outerplanar graph on $n$ vertices with the maximum spread is  
  $K_1 \vee \left( P_{\left\lceil \frac{2n-1}{3} \right\rceil} \cup \left\lfloor \frac{n-2}{3}\right\rfloor  P_1\right)$.
\end{theorem}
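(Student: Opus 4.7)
The starting point is the theorem of Gotshall, O'Brien, and Tait cited above, which asserts that for sufficiently large $n$ the extremal graph is $K_1 \vee F$ for some linear forest $F$ with $\Omega(n)$ edges. Write $F = P_{k_1} \cup \cdots \cup P_{k_m} \cup \ell P_1$ with $k_i \geq 2$ and $\sum_i k_i + \ell = n-1$. The central analytic tool is the \emph{coronal} $\gamma_H(\lambda) := \mathbf{1}^T (\lambda I - A_H)^{-1}\mathbf{1}$, which satisfies $\phi_{K_1 \vee F}(\lambda) = \phi_F(\lambda)(\lambda - \gamma_F(\lambda))$ and, by block decomposition, $\gamma_F = \ell/\lambda + \sum_i \gamma_{P_{k_i}}$. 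The eigenvalues of $K_1 \vee F$ lying outside the spectrum of $F$ are exactly the roots of the \emph{secular equation} $\lambda = \gamma_F(\lambda)$.

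Solving the tridiagonal system $(\lambda I - A_{P_k})y = \mathbf{1}$ under the parametrization $\lambda = \pm 2\cosh\alpha$ yields the closed form
\[
\gamma_{P_k}(2\cosh\alpha) \;=\; \frac{1}{\lambda - 2}\left[\,k \;-\; \frac{\sinh(k\alpha/2)}{\sinh(\alpha/2)\cosh((k+1)\alpha/2)}\right],
\]
with an analogous expression for $\lambda < -2$. Letting $k \to \infty$ (with $\alpha$ bounded away from $0$) gives the uniform expansion $\gamma_{P_k}(\lambda) = k/(\lambda-2) - C_\pm(\lambda) + O(e^{-c\alpha k})$, where $C_+(\lambda) = e^{-\alpha/2}/(4\sinh^3(\alpha/2))$ for $\lambda = 2\cosh\alpha > 2$ and $C_-(\lambda) = e^{-\alpha/2}/(4\cosh^3(\alpha/2))$ for $\lambda = -2\cosh\alpha < -2$ are positive.

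\emph{Step 1 (single path is optimal).} Substituting the expansion into the secular equation yields $\lambda^2 - \ell = \lambda k/(\lambda - 2) - m\,\lambda\,C_\pm(\lambda) + o(1)$, where $k = \sum_i k_i$. Implicit differentiation in $m$ at the extreme roots gives $\partial \lambda_1/\partial m \approx -C_+(\lambda_1)/2$ and $\partial \lambda_n/\partial m \approx -C_-(|\lambda_n|)/2$. Since $\sinh(\alpha/2) < \cosh(\alpha/2)$, we have $C_+ > C_-$ at matched $\alpha$; hence $\partial(\lambda_1 - \lambda_n)/\partial m < 0$ and the spread is strictly decreasing in the number of nontrivial path components. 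Thus the optimum satisfies $m = 1$, reducing $F$ to $P_k \cup \ell P_1$ with $k + \ell = n-1$.

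\emph{Step 2 (optimization of $k$ and discrete refinement).} For $m = 1$, clearing denominators in the secular equation reduces it at leading order to the cubic $\Phi(\lambda) = \lambda^3 - 2\lambda^2 - (n-1)\lambda + 2\ell$, whose extreme roots approximate $\lambda_1, \lambda_n$ to $o(1)$. Vieta's formulas (with the middle root $\lambda_2 \approx 2\ell/(n-1) + O(1/n)$) give, after careful bookkeeping of the $O(1/n)$ correction to $\lambda_2$,
\[
(\lambda_1 - \lambda_n)^2 \;=\; 4(n-1) + 4(1-\alpha)(1 + 3\alpha) + o(1), \qquad \alpha := \ell/(n-1).
\]
The function $h(\alpha) = (1-\alpha)(1+3\alpha)$ is strictly concave on $[0,1]$ with unique maximum $4/3$ at $\alpha = 1/3$, so $\ell \sim (n-1)/3$ and $k \sim 2(n-1)/3$. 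A sharper comparison of spreads for the $O(1)$ integer values of $\ell$ near $(n-1)/3$, using the next term in the expansion and the strict concavity of $h$, pins down the unique integer optimum at $k = \lceil (2n-1)/3 \rceil$ and $\ell = \lfloor (n-2)/3 \rfloor$. The main obstacle is Step 1: the merging advantage is only of order $O(n^{-3/2})$, so the coronal expansion must be controlled uniformly in all component sizes; in particular, short components $P_{k_i}$ with $k_i = O(1)$ fall outside the large-$k$ regime and require a separate non-asymptotic argument using explicit small-$k_i$ values of $\gamma_{P_{k_i}}$. A related subtlety is that the $o(1)$ error in Step 2 must be replaced by an explicit decaying bound in order to distinguish adjacent integer values of $\ell$ in each residue class of $n$ modulo $3$.
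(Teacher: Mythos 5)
Your overall strategy is sound and your leading-order answer agrees with the paper: the secular equation $\lambda=\gamma_F(\lambda)$ is exactly the paper's equation \eqref{eigen_u2} in disguise (the paper expands the coronal as $\sum_k \lambda^{-(k+1)}\mathbf{1}'A_\ell^k\mathbf{1}$ rather than via a $\cosh$ closed form), and your $(\lambda_1-\lambda_n)^2=4(n-1)+4(1-\alpha)(1+3\alpha)+o(1)$ matches the paper's $2\sqrt{n-1}+2c_2/\sqrt{n-1}+\cdots$ after squaring. However, there are two genuine gaps. The decisive one is in Step 2: the cubic $\Phi(\lambda)=\lambda^3-2\lambda^2-(n-1)\lambda+2\ell$ is obtained by discarding the term $-\lambda(\lambda-2)C_\pm(\lambda)$, which is $2+O(n^{-1/2})$, not $o(1)$; its $\ell$-dependence (through $\lambda$) feeds into $S(G_{\ell+1})-S(G_\ell)$ at order $n^{-5/2}$, which is precisely the order at which adjacent integers are separated near the optimum. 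Concretely, the paper's computation shows that the second-order coefficient $c_4$ shifts the critical point of $\ell\mapsto f(\ell)$ from $\frac{2n}{3}-\frac16$ to $\frac{2n}{3}-\frac{25}{54}$, a shift of $-\frac{8}{27}$ that changes the rounded maximizer when $n\equiv 2\pmod 3$. So ``Vieta on the cubic plus concavity of $h$'' cannot distinguish $\ell_0$ from $\ell_0+1$ for a third of all $n$; you must carry the expansion of the spread to additive precision $o(n^{-5/2})$ with explicit $\ell$-dependence (the paper's $c_2$, $c_4$, and control of $c_6$ and the remainder, justified by Lemma \ref{lem:laurent}). Your closed form for $\gamma_{P_k}$ does in principle contain this information, but the ``careful bookkeeping'' you defer is where essentially all of the work lies.

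The second gap is Step 1. Differentiating the approximate secular equation in $m$ with $k$ and $\ell$ held fixed does not correspond to the merge operation (which sends $k\mapsto k-1$, $\ell\mapsto\ell+1$, $m\mapsto m-1$), though either reduction lands in the same one-parameter family so this is repairable. More seriously, the advantage you are trying to detect is only $\Theta(n^{-3/2})$ (as you note, $C_+-C_-\asymp n^{-3/2}$), so the expansion $\gamma_{P_{k_i}}=k_i/(\lambda-2)-C_\pm+O(e^{-ck_i\alpha})$ must be controlled uniformly to that precision over all component sizes, and components with $k_i=O(1)$ need a separate argument that you acknowledge but do not supply. The paper sidesteps all of this with an exact, non-asymptotic variational argument (Lemmas \ref{l2} and \ref{l3}): use the extremal eigenvector of $G$ as a test vector for the merged graph $G'$, with sign control coming from Corollary \ref{c1}. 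If you want to salvage your analytic route for Step 1, you would be better served by adopting that eigenvector-rewiring argument and reserving the coronal machinery for Step 2.
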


We also extend our investigation to planar graphs, and determine the unique extremal $n$-vertex planar graph attaining the maximum spread.

\begin{theorem}\label{mainplanar}
For $n$ sufficiently large, the planar graph on $n$ vertices with the maximum spread is  
$(K_1 \cup K_1) \vee \left( P_{\lceil \frac{2n - 2}{3} \rceil} \cup \lfloor \frac{n-4}{3} \rfloor P_1\right).$
\end{theorem}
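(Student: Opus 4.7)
The plan is to adapt the strategy used in the outerplanar case (Theorem~\ref{main}) to the planar setting, exploiting the fact that a $K_5$- and $K_{3,3}$-minor-free graph can accommodate two universal vertices rather than one. Let $G^*$ be an $n$-vertex planar graph maximizing the spread, and let $\bx, \by$ be unit eigenvectors for $\lambda_1 := \lambda_1(G^*)$ and $\lambda_n := \lambda_n(G^*)$. First I would establish a lower bound $S(G^*) \geq 2\sqrt{2(n-2)} + \Omega(1)$ by evaluating the Rayleigh quotient of the conjectured extremal graph on test vectors modeled after the $\lambda_1$ and $\lambda_n$ eigenvectors of $K_{2,n-2}$, which yields the leading spectral behavior while the path structure supplies the $\Omega(1)$ correction.

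Second, a structural reduction analogous to that in \cite{GBT2022} would identify two ``heavy'' vertices $u$ and $w$ (maximizers of $|\bx_v|$ and $|\by_v|$ respectively). Pushing mass in the Rayleigh quotients, combined with Cauchy interlacing and the lower bound $S(G^*) = \Omega(\sqrt{n})$, forces $\bx_u, |\by_w| = \Omega(1)$ and $d_{G^*}(u), d_{G^*}(w) \geq n - o(n)$. A careful edge-switching argument, certifying at each step that the resulting graphs remain $K_5$- and $K_{3,3}$-minor-free, then shows that both $u$ and $w$ must be adjacent to every other vertex of $G^*$. To rule out the edge $uw$ I would compare $(K_1\cup K_1)\vee H$ with $K_2\vee H$ where $H := G^* - \{u,w\}$: a rank-one perturbation analysis in the coordinates $\bx_u\bx_w$ and $\by_u\by_w$ shows that the contribution of this edge to $\lambda_1$ is dominated by its (opposite-signed) contribution to $\lambda_n$, so omitting $uw$ strictly increases the spread. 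Finally, by Wagner's theorem, $H$ must be $K_4$- and $K_{2,3}$-minor-free, hence outerplanar, and a further spectral monotonicity step shows any cycle or extra branching in $H$ strictly decreases the spread, forcing $H$ to be a linear forest.

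Third, among all linear forests $H = \bigcup_i P_{a_i}$ with $\sum_i a_i = n-2$, I would optimize the spread of $(K_1\cup K_1)\vee H$ via a quotient-matrix reduction that expresses $\lambda_1$ and $\lambda_n$ as the extreme roots of a low-degree characteristic polynomial with coefficients depending only on the multiset $\{a_i\}$. An exchange argument showing that merging any two path components of length $\geq 2$ strictly increases the spread reduces the problem to $H = P_a \cup b P_1$ with $a + b = n - 2$, and an asymptotic expansion in $1/\sqrt{n}$ followed by a discrete optimization in $a$ locates the maximum at $a = \lceil (2n-2)/3 \rceil$, yielding the stated extremal graph.

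The main obstacle is the combined structural step: proving the non-adjacency of $u$ and $w$ and the reduction from outerplanar $H$ to a linear forest. The spreads of natural candidates such as $(K_1\cup K_1)\vee P_{n-2}$ and $K_2\vee P_{n-2}$ agree at leading order $\Theta(\sqrt{n})$, so separating them requires carefully controlling the second-order behavior of $\lambda_1$ and $\lambda_n$ under single-edge perturbations, which in turn demands tight pointwise bounds on the entries of $\bx$ and $\by$ on the ``light'' vertices, analogous to but strictly more delicate than the corresponding step in the proof of Theorem~\ref{main}.
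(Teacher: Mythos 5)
Your overall architecture (heavy-vertex identification, reduction to two universal vertices via edge switching, merge/exchange on path components, and a $1/\sqrt{n}$ expansion followed by discrete optimization at $\ell=\lceil(2n-2)/3\rceil$) matches the paper's proof. But there is a genuine gap at the step where you ``rule out the edge $uw$'' by asserting that for the fixed graph $H=G^*-\{u,w\}$, omitting $uw$ strictly increases the spread. First, the sign structure is not as you describe: the smallest-eigenvalue eigenvector has $\by_u$ and $\by_w$ of the \emph{same} sign (these are the two heavy positive vertices), so adding $uw$ increases \emph{both} $\lambda_1$ and $\lambda_n$ by amounts $2\bx_u\bx_w/\|\bx\|^2$ and $2\by_u\by_w/\|\by\|^2$ that agree to leading order; the effect on the spread is a difference of two comparable positive quantities and is decided only at order $n^{-1/2}$. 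Second, at that order the sign genuinely depends on $H$ and your pointwise claim is false. Writing $t=(\ell-2)/(n-2)$ for the unique nontrivial path length, the relevant coefficient of $2/\sqrt{2(n-2)}$ in the spread is $-\tfrac32 t^2+2t$ without the edge $uw$ and $-\tfrac32 t^2+\tfrac32 t+\tfrac18$ with it; the difference $\tfrac12 t-\tfrac18$ is \emph{negative} for $t<\tfrac14$. In the extreme case $H=\overline{K}_{n-2}$ this is elementary: $S(K_2\vee\overline{K}_{n-2})=\sqrt{8n-15}>2\sqrt{2(n-2)}=S(K_{2,n-2})$, so deleting $uw$ strictly \emph{decreases} the spread there. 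Hence you cannot discard the edge $uw$ before optimizing over $H$; the argument only closes if you first locate the optimal $H$ within the $K_2\vee T$ family (where $t\approx\tfrac12$) and then compare, which already requires the second-order expansions for both families.

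The paper avoids this trap by keeping all three candidate families that survive the structural reduction --- the double wheel $(K_1\cup K_1)\vee C_{n-2}$, the first kind $(K_1\cup K_1)\vee T$, and the second kind $K_2\vee T$ with $T$ a linear forest --- computing the Laurent expansion of the spread within each, and comparing the three \emph{optima} (coefficients $\tfrac12$, $\tfrac23$, and $\tfrac12$ respectively, so the first kind wins). Relatedly, your proposal glosses over the double wheel: with two universal vertices, $H$ has maximum degree at most $2$ and the one configuration not excluded by a $K_5$-minor argument is $H=C_{n-2}$, whose spread $\sqrt{8n-12}$ must be beaten explicitly rather than by a generic ``spectral monotonicity'' appeal. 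With these two cases handled by explicit second-order comparison, the remainder of your plan is sound and coincides with the paper's.
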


The remainder of the paper is organized as follows. In Section \ref{sec:outerplanar}, we first reduce the problem for outerplanar graphs to a special family of linear outerplanar graphs with only one non-trivial path (after deleting the center vertex). 
Theorem \ref{main} is proved by calculating the spread of these linear outerplanar graphs up to the error term $O(n^{-3})$.
In Section \ref{sec:planar}, we first prove that the maximum-spread planar graphs $G$ must contain $K_{2,n-2}$ as a subgraph by carefully estimating the eigenvectors of $\lambda_n$ and $\lambda_1$. From this structural lemma, we prove that $G$ is one of three types of planar graphs: a double wheel, a linear planar graph of the first kind, or a linear planar graph of the second kind (defined later). For each family, we calculate the maximum spread with enough precision to distinguish the maximum-spread planar graph. In the appendix, we prove a lemma on the existence of the Laurent series of the solution of certain equations. We believe that our method could be useful for other problems on determining the maximum or minimum spectral parameters of some family of graphs.

\section{Maximum spread over all outerplanar graphs}\label{sec:outerplanar}

Let $G$ be a simple graph and $\lambda_1\geq \ldots \geq \lambda_n$ be the eigenvalues of the adjacency matrix of $G$. Here $\lambda_1$ is called the \emph{spectral radius} of $G$. Let $A(G)$ denote the adjacency matrix of $G$. Given a vector $\cx \in \mathbb{R}^n$, let $\cx'$ denotes its transpose, and for each $i\in [n]$, let $\cx_i$ denote the $i$-th coordinate of $\cx$. 
Using the Rayleigh quotient of symmetric matrices, we have the following equalities for $\lambda_1$ and $\lambda_n$:
    \begin{align}
        \label{Rayleigh1}
        \lambda_1 &= \max_{\substack{\cx \in \mathbb{R}^n\\\cx\neq 0}} \frac{\cx' A(G) \cx}{\cx'\cx} = \max_{\substack{\cx \in \mathbb{R}^n\\\cx\neq 0}} \frac{2\sum_{ij\in E(G)} \cx_i \cx_j}{\cx'\cx} \\
        \label{Rayleighn}
         \lambda_n &= \min_{\substack{\cx \in \mathbb{R}^n\\\cx\neq 0}}  \frac{\cx' A(G) \cx}{\cx'\cx} = \min_{\substack{\cx \in \mathbb{R}^n\\\cx\neq 0}} \frac{2\sum_{ij\in E(G)} \cx_i \cx_j}{\cx'\cx}
    \end{align}

Consider a linear outerplanar graph $G= K_1 \vee (P_{\ell_1} \cup P_{\ell_2} \cup \cdots P_{\ell_r})$. Let $u$ be the center vertex of $G$ and $v_1,v_2,\ldots, v_{\ell}$ be the vertices in order of a path component $P_{\ell}$ in $G-u$ where $\ell \in \{\ell_i: i\in [r]\}$. Let $\alpha$ be a normalized eigenvector corresponding to an eigenvalue $\lambda$ of the adjacency matrix of $G$ so that $\alpha(u)=1$. Let $A_\ell$ be the adjacency matrix of $P_\ell$, $x_i=\alpha(v_i)$ for $1\leq i \leq \ell$,
$\bx=(x_1,\ldots, x_\ell)'\in {\mathbb R}^{\ell}$, and $\one = (1, \ldots, 1)' \in \mathbb{R}^{\ell}$. Let $I$ denote the identity matrix. The following lemma computes the vector ${\bf x}$.

\begin{lemma}\label{l1}
    Let $G= K_1 \vee (P_{\ell_1} \cup P_{\ell_2} \cup \cdots \cup P_{\ell_r})$ be a linear outerplanar graph on $n$ vertices with center vertex $u$. Suppose $\lambda$ is an eigenvalue of $A(G)$ with $|\lambda|\geq 2$, and $\alpha$ is a normalized eigenvector of $A(G)$ corresponding to $\lambda$ such that $\alpha(u)=1$. 
    Let $P_{\ell}$ be one of the path components of $G - u$ and let ${\bf x}$ and $A_{\ell}$ be defined as above.
    Then
    \begin{equation} \label{eq:taylor_eq} 
        {\bf x} =\sum_{k=0}^\infty \lambda^{-(k+1)} A_\ell^k \one.
    \end{equation}
\end{lemma}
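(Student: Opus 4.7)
The plan is to derive a linear system for $\mathbf{x}$ from the eigenvalue equation on the path, and then expand the resolvent as a Neumann series, using the hypothesis $|\lambda|\geq 2$ to guarantee convergence.

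First I would write down the eigenvalue equation $A(G)\alpha=\lambda\alpha$ restricted to the vertices $v_1,\dots,v_\ell$ of the path component $P_\ell$. Since the only neighbors of $v_i$ outside the path are $u$, and $\alpha(u)=1$, the equations take the form $\lambda x_1 = x_2 + 1$, $\lambda x_i = x_{i-1}+x_{i+1}+1$ for $2\leq i\leq \ell-1$, and $\lambda x_\ell = x_{\ell-1}+1$. Collected, these are exactly $\lambda \mathbf{x} = A_\ell \mathbf{x} + \mathbf{1}$, i.e.\ $(\lambda I - A_\ell)\mathbf{x}=\mathbf{1}$.

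Next I would invert $\lambda I - A_\ell$. The path adjacency matrix $A_\ell$ has spectral radius $2\cos(\pi/(\ell+1))<2$, so by hypothesis $\|\lambda^{-1}A_\ell\|<1$, where $\|\cdot\|$ denotes the spectral norm. (Note that $|\lambda|=2$ is still fine because the strict inequality $2\cos(\pi/(\ell+1))<2$ holds for all finite $\ell$.) Hence the Neumann series
\begin{equation*}
(\lambda I - A_\ell)^{-1} \;=\; \lambda^{-1}\bigl(I-\lambda^{-1}A_\ell\bigr)^{-1} \;=\; \sum_{k=0}^\infty \lambda^{-(k+1)} A_\ell^k
\end{equation*}
converges absolutely in operator norm. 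Applying this to $\mathbf{1}$ yields the claimed identity
\begin{equation*}
\mathbf{x} \;=\; (\lambda I - A_\ell)^{-1}\mathbf{1} \;=\; \sum_{k=0}^\infty \lambda^{-(k+1)} A_\ell^k \mathbf{1}.
\end{equation*}

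There is no real obstacle here; the only subtlety is justifying invertibility of $\lambda I - A_\ell$ and convergence of the geometric series. Both follow from the uniform bound $\|A_\ell\|<2\leq|\lambda|$ on the path spectrum, independently of $\ell$. This is important for later use, since the same Laurent expansion will then allow the authors to compute $\mathbf{x}^\top\mathbf{1}$ and $\mathbf{x}^\top\mathbf{x}$ as power series in $\lambda^{-1}$ with coefficients given by explicit walk counts in $P_\ell$.
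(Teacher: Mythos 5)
Your proposal is correct and follows essentially the same route as the paper: restrict the eigen-equation to the path component to get $(\lambda I - A_\ell)\mathbf{x} = \mathbf{1}$, then expand the resolvent as a Neumann series, justified by $\lambda_1(A_\ell) < 2 \le |\lambda|$. Your explicit identification of $\lambda_1(A_\ell) = 2\cos(\pi/(\ell+1))$ is a slightly more detailed justification of the convergence than the paper gives, but the argument is identical in substance.
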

\begin{proof}
Each vertex $v_i$ is adjacent to $u$ and $\alpha(u)=1$. Hence when restricting the coordinates of $A(G)\alpha$ to $v_1,\ldots, v_{\ell}$, we have that
\begin{equation}\label{eq:bx}
    A_\ell\bx + \one = \lambda \bx.
\end{equation}
It then follows that
\begin{align}
    \bx &= (\lambda I-A_\ell)^{-1}\one  \nonumber\\
    &=\lambda^{-1}  (I-\lambda^{-1}A_\ell)^{-1}\one  \nonumber\\
    &= \lambda^{-1} \sum_{k=0}^\infty (\lambda^{-1}A_\ell)^{k} \one\nonumber\\
    &= \sum_{k=0}^\infty \lambda^{-(k+1)} A_\ell^k \one. 
    \end{align}
Here we use the assumption that $|\lambda|\geq 2 > \lambda_1(A_\ell)$ so that the infinite series converges.
\end{proof}

\begin{corollary}\label{c1}
  Let $G= K_1 \vee (P_{\ell_1} \cup P_{\ell_2} \cup \cdots P_{\ell_r})$ be a linear outerplanar graph on $n$ vertices with center vertex $u$. Let $\lambda_1\geq \ldots \geq \lambda_n$ be the eigenvalues of $A(G)$ and $\{\alpha_i\}_{i\in [n]}$ be a set of pairwise orthogonal eigenvectors of $A(G)$ such that $\alpha_i$ corresponds to $\lambda_i$ and $\alpha_i(u)=1$ for each $i\in [n]$. Then the following properties hold:
  \begin{enumerate}[(i)]
    \item If $\lambda_i \geq 2$, then all entries of $\alpha_i$ are positive.
    \item If $\lambda_i \leq -2$,  then all entries of $\alpha_i$ but the $u$-entry are negative.
    \item For $2\leq i\leq n-1$, we have $\lambda_i\in(-2,2)$.
\end{enumerate}
\end{corollary}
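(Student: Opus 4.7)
The plan is to prove parts (iii), (i), (ii) in order of increasing difficulty. For (iii), I will apply Cauchy's interlacing theorem to the principal submatrix $A(G-u)$, which is block-diagonal with blocks $A(P_{\ell_j})$. Since each $A(P_{\ell_j})$ has spectrum $\{2\cos(k\pi/(\ell_j+1)):1\le k\le \ell_j\} \subset (-2,2)$, interlacing yields $\lambda_2(G) \le \lambda_1(G-u) < 2$ and $\lambda_{n-1}(G) \ge \lambda_{n-1}(G-u) > -2$, proving (iii). For (i), the hypothesis $\lambda_i \ge 2 > \lambda_1(A_\ell)$ puts us in the setting of Lemma~\ref{l1}: every term $\lambda^{-(k+1)} A_\ell^k \one$ of the series is entry-wise nonnegative, and the $k=0$ summand $\lambda^{-1}\one$ is strictly positive, so $\bx > 0$ entry-wise; combined with $\alpha_i(u)=1$, this gives $\alpha_i > 0$.

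Part (ii) requires more work since the series in Lemma~\ref{l1} has alternating signs. Setting $\mu = -\lambda \ge 2$, I will rewrite $\bx = -(\mu I + A_\ell)^{-1}\one$, so it suffices to show $\mathbf{z} := (\mu I + A_\ell)^{-1}\one$ is entry-wise positive. The vector $\mathbf{z}$ satisfies the tridiagonal recurrence $\mu z_i + z_{i-1} + z_{i+1} = 1$ with $z_0 = z_{\ell+1} = 0$. Writing $\mu = 2\cosh t$ with $t > 0$, the characteristic equation $r^2 + \mu r + 1 = 0$ has roots $r_\pm = -e^{\mp t}$ satisfying $r_+ r_- = 1$, and a constant particular solution is $C = 1/(\mu+2) = 1/(4\cosh^2(t/2))$. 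Using the ansatz $z_i = A r_+^i + B r_-^i + C$, solving for $A,B$ from the two boundary conditions, and simplifying via $r_{\pm}^k = (-1)^k e^{\mp kt}$ together with the identities $\sinh A + \sinh B = 2\sinh\frac{A+B}{2}\cosh\frac{A-B}{2}$ and $\cosh A - \cosh B = 2\sinh\frac{A+B}{2}\sinh\frac{A-B}{2}$, the solution should collapse to the closed form
\begin{equation*}
z_i = \frac{\sinh((\ell+1)t) - (-1)^i \sinh((\ell+1-i)t) - (-1)^{\ell+i+1}\sinh(it)}{4\cosh^2(t/2)\,\sinh((\ell+1)t)}.
\end{equation*}

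The denominator is positive for $t>0$, so the sign of $z_i$ is that of the numerator $N_i$, which I plan to verify is positive by a parity case analysis on $(i \bmod 2, \ell \bmod 2)$. In three of the four cases, $N_i$ is either a sum of three positive $\sinh$ terms (when both $i$ and $\ell$ are odd) or reduces directly to a strict comparison $\sinh((\ell+1)t) > \sinh(kt)$ with $k \le \ell$ (the two mixed-parity cases), each of which is immediate from the strict monotonicity of $\sinh$ on $[0,\infty)$. In the last case ($i$ even, $\ell$ odd), applying sum-to-product rewrites $N_i$ as $2\sinh\bigl(\tfrac{(\ell+1)t}{2}\bigr)\bigl[\cosh\bigl(\tfrac{(\ell+1)t}{2}\bigr) - \cosh\bigl(\tfrac{(\ell+1-2i)t}{2}\bigr)\bigr]$, which is positive because $|\ell+1-2i| \le \ell-1 < \ell+1$ and $\cosh$ is even and strictly increasing on $[0,\infty)$. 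The main obstacle is the derivation of the closed form for $z_i$: the algebra is routine but the sign tracking through the hyperbolic identities is delicate. Once the formula is in hand, the four-case verification is short, and it yields $\bx = -\mathbf{z} < 0$ on the path component, completing (ii) together with $\alpha_i(u)=1$.
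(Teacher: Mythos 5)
Your proof is correct, but parts (ii) and (iii) take genuinely different routes from the paper (part (i) is essentially identical). For (iii), the paper deduces the bound from (i) and (ii): if $\lambda_2\ge 2$ then $\alpha_1$ and $\alpha_2$ would both be entrywise positive, contradicting $\alpha_1\perp\alpha_2$, and symmetrically for $\lambda_{n-1}\le -2$. Your Cauchy interlacing argument on the block-diagonal matrix $A(G-u)$, whose spectrum $\{2\cos(k\pi/(\ell_j+1))\}$ lies in $(-2,2)$, is independent of (i) and (ii) and arguably cleaner. For (ii), the paper stays with the series of Lemma~\ref{l1}: since each vertex of $P_\ell$ has degree at most $2$, the sequence $|\lambda|^{-(k+1)}(A_\ell^k\one)_j$ is monotone decreasing, so the alternating series is bounded above by its first two terms, $\frac{1}{\lambda}+\frac{2}{\lambda^2}\le 0$ for $\lambda\le-2$ --- a two-line argument. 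Your explicit solution of the tridiagonal system $(\mu I+A_\ell)\mathbf{z}=\one$ via the hyperbolic ansatz is correct (the closed form and all four parity cases check out) but considerably heavier; what it buys is an exact formula for the eigenvector entries rather than just their signs. One shared wrinkle: your parametrization $\mu=2\cosh t$ with $t>0$ covers only $\lambda<-2$, and at $\lambda=-2$ the formula degenerates --- indeed for $\ell=3$ the middle entry is exactly $0$ there, so strict negativity can fail at the endpoint. The paper's proof has the same blind spot (its ``monotone decreasing'' sequence is not strictly decreasing for the middle vertex of $P_3$ at $\lambda=-2$), and the issue is harmless downstream since $\lambda_n<-2$ strictly wherever the corollary is invoked.
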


\begin{proof}
Let $P_{\ell} = v_1 \ldots v_{\ell}$ be one of the path components in $G - u$. 
When $\lambda_i \geq 2$, by Equation \eqref{eq:taylor_eq}, since each entry in the vector $A_{\ell}^k \one$ is non-negative, it is clear that $\alpha_i(v_j) >0$ for each $j\in [\ell]$. This holds for every path component of $G-u$. Moreover, $\alpha_i(u)=1$. Hence all entries of $\alpha_i$ are positive.

For (ii), note that for each $j\in [\ell]$, $(A_{\ell}^k \one)_j$ counts the number of walks starting from $v_j$ in $P_{\ell}$. Since the degree of $v_j$ is at most $2$ in $G-u$ and $|\lambda|\geq 2$, we have that \{$|\lambda|^{-(k+1)} (A_{\ell}^k \one)_j, k\geq 0\}$ is a monotone decreasing sequence. Hence, if $\ell \ge 2$, then for each $j \in [\ell]$, the alternating series $\sum_{k=0}^{\infty} \lambda^{-(k+1)} (A_{\ell}^k \one)_j$ satisfies 
$$\sum_{k=0}^{\infty} \lambda^{-(k+1)} (A_{\ell}^k \one)_j
< \frac{1}{\lambda} (A_{\ell}^0\one)_j + \frac{1}{\lambda^2} (A_{\ell}\one)_j\leq \frac{1}{\lambda}+ \frac{1}{\lambda^2}\cdot 2 \leq 0,$$
since $\lambda\leq -2$ and $(A_{\ell}^k\one)_{j} > 0$ for each choice of $k$ and $j$. If $\ell = 1$, then we have $\sum_{k=0}^{\infty}\lambda^{-(k+1)} (A_{\ell}^k \one)_j = \frac{1}{\lambda} (A_{\ell}^0)_1 < 0$. 

 To prove (iii), we will prove that $\lambda_2 < 2$ and $\lambda_{n-1} > -2$. If $\lambda_2 \ge 2$, then by (i) the corresponding normalized eigenvectors $\alpha_1$ and $\alpha_2$ both have all entries positive, contradicting the spectral theorem that $\alpha_1\perp \alpha_2$.  Similarly, if $\lambda_{n-1} \le -2$, then by (ii) the corresponding normalized eigenvectors $\alpha_{n-1}$ and $\alpha_{n}$ both have all negative entries except for the $u$-entry (which is $1$), contradicting $\alpha_{n-1}\perp \alpha_{n}$.
\end{proof}

One of our main ideas in determining the extremal structure  is to merge the paths in the linear forests until there is only one non-trivial path. Consider a linear outerplanar graph
$G= K_1 \vee (P_{\ell_1} \cup P_{\ell_2} \cup \cdots P_{\ell_r})$.
A merge operation replaces $G$
by $G'=K_1\vee (P_{\ell_1+\ell_2-1} \cup P_1\cup \cdots P_{\ell_r})$, \textit{i.e.}, it merges two non-trivial paths in $G-u$ into one. Note that a merge operation does not change the number of edges in $G$. In the following two lemmas, we show that a merge operation increases $\lambda_1$ of $G$ and decreases $\lambda_n$ of $G$, and thus increases the spread of $G$.
\begin{lemma}\label{l2}
    Let $G= K_1 \vee (P_{\ell_1} \cup P_{\ell_2} \cup \cdots \cup P_{\ell_r})$ be a linear outerplanar graph on $n\geq 5$ vertices with $\ell_1, \ell_2 \geq 2$, and let $G'$ be obtained from $G$ by applying a merge operation, i.e., $G'=K_1\vee (P_{\ell_1+\ell_2-1} \cup P_1\cup \cdots \cup P_{\ell_r})$.  Then $\lambda_1(G') > \lambda_1(G)$.
\end{lemma}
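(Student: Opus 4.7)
The plan is to reduce the spectral-radius comparison to an analytic inequality between walk-generating functions of paths. For any linear outerplanar graph $H = K_1 \vee (P_{m_1}\cup\cdots\cup P_{m_s})$ with center $u$, normalising an eigenvector $\alpha$ so that $\alpha(u)=1$ and reading the eigenvalue equation at $u$ yields $\lambda = \sum_{v\sim u}\alpha(v)$; combined with Lemma \ref{l1} applied to each path component, this shows that, for $|\lambda|>2$, $\lambda$ is an eigenvalue of $A(H)$ with $\alpha(u)\neq 0$ if and only if
\[
h_H(\lambda) \;:=\; \lambda - \sum_{i=1}^{s} f_{m_i}(\lambda) \;=\; 0,
\qquad f_m(\lambda) \;:=\; \one^{T}(\lambda I - A_m)^{-1}\one \;=\; \sum_{k\geq 0}\lambda^{-(k+1)} W_k(P_m),
\]
where $W_k(P_m) := \one^{T} A_m^{k}\one$ counts the walks of length $k$ in $P_m$. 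Since $G$ and $G'$ both strictly contain a triangle (formed by $u$ together with any edge of one of the path components), $\lambda_1(G), \lambda_1(G') > 2$; combined with the strict positivity of their Perron vectors (Corollary \ref{c1}(i)), this identifies $\lambda_1(G)$ and $\lambda_1(G')$ as the largest roots of $h_G$ and $h_{G'}$ respectively.

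The heart of the argument will be the strict inequality
\[
f_{\ell_1+\ell_2-1}(\lambda) + f_1(\lambda) \;>\; f_{\ell_1}(\lambda) + f_{\ell_2}(\lambda) \qquad \text{for every }\lambda>2,
\]
which via the walk expansion reduces to the combinatorial claim that $W_k(P_{\ell_1+\ell_2-1}) + W_k(P_1) \geq W_k(P_{\ell_1}) + W_k(P_{\ell_2})$ for every $k\geq 0$, with strict inequality for some $k$. I would prove this by a direct walk embedding: labelling the vertices of $P_{\ell_1+\ell_2-1}$ as $1,2,\ldots,L$ with ``glue vertex'' $m := \ell_1$, the subpaths $P_{\ell_1}$ and $P_{\ell_2}$ are the induced subgraphs on $\{1,\ldots,m\}$ and $\{m,m+1,\ldots,L\}$ respectively. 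Every walk in either embeds as a walk in $P_L$, and the two embeddings overlap precisely in the single length-$0$ walk at $m$; this double count is cancelled exactly by $W_k(P_1)$, which equals $1$ if $k=0$ and $0$ otherwise. The walks in $P_L$ that visit both $\{1,\ldots,m-1\}$ and $\{m+1,\ldots,L\}$---for instance $m-1,m,m+1$ and its reverse, which exist since $\ell_1,\ell_2\geq 2$---are extra and yield strict positivity at $k=2$.

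To finish, since $h_G(\lambda_1(G))=0$, the inequality above gives $h_{G'}(\lambda_1(G)) = -[f_{\ell_1+\ell_2-1} + f_1 - f_{\ell_1} - f_{\ell_2}](\lambda_1(G)) < 0$; because $h_{G'}$ is continuous on $(2,\infty)$ and $h_{G'}(\lambda)\to+\infty$ as $\lambda\to\infty$, the intermediate value theorem produces a root of $h_{G'}$ strictly greater than $\lambda_1(G)$, and this root is necessarily an eigenvalue of $A(G')$; hence $\lambda_1(G')>\lambda_1(G)$. The main obstacle will be the walk bookkeeping at the glue vertex: one must account correctly for walks passing through $m$ without double counting, and it is precisely the $+f_1(\lambda)$ correction---equivalently, the isolated vertex created by the merge---that makes the accounting balance. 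A naive Rayleigh-quotient attempt using the Perron vector of $G$ as a test vector on $G'$ does not obviously work, because the endpoint value on the shorter path $P_{\ell_1}$ can be smaller than the near-endpoint value on $P_{\ell_2}$, so the two-edge swap induced by the merge need not immediately increase the quadratic form.
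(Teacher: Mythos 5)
Your proof is correct, but it takes a genuinely different route from the paper. The paper realizes the merge as a two-edge swap (add $v_1v_2'$, delete $v_1'v_2'$) and plugs the Perron vector $\alpha$ of $G$ into the Rayleigh quotient for $G'$: the change in the quadratic form is $2x_1y_2-2y_1y_2=2y_2(x_1-y_1)$, which is nonnegative after assuming $x_1\ge y_1$ without loss of generality, and equality in $\lambda_1(G')\ge\lambda_1(G)$ is excluded because $\alpha$ is not an eigenvector of $A(G')$. Your closing objection to this approach rests on comparing the wrong pair of entries: the swap requires comparing the two \emph{endpoint} values $x_1$ and $y_1$ (one of which may be assumed larger by choosing which path to break), not an endpoint against a near-endpoint, so the paper's argument does go through. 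Your alternative -- reducing $\lambda_1$ to the largest root of $h_H(\lambda)=\lambda-\sum_i f_{m_i}(\lambda)$ on $(2,\infty)$, proving the termwise walk inequality $W_k(P_{\ell_1+\ell_2-1})+W_k(P_1)\ge W_k(P_{\ell_1})+W_k(P_{\ell_2})$ with strictness at $k=2$, and concluding via $h_{G'}(\lambda_1(G))<0$ and the intermediate value theorem -- is complete and correct: the triangle argument legitimately gives $\lambda_1>2$, the overlap of the two walk embeddings at the glue vertex is exactly compensated by $W_k(P_1)$, and the extra walks crossing the glue vertex supply strictness. What your approach buys is that it needs no case analysis on eigenvector entries and no separate argument to upgrade the weak inequality to a strict one; what it costs is the machinery of the characteristic function and the walk bookkeeping, where the paper gets by with a three-line quadratic-form computation. (Note, though, that your termwise positivity argument uses $\lambda^{-(k+1)}>0$ and so does not transfer directly to the $\lambda_n$ analogue in Lemma \ref{l3}, where the paper's Rayleigh-quotient template does.)
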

\begin{proof}
Let $\lambda_1=\lambda_1(G)$ be the largest eigenvalue of $G$ and $\alpha$ be an eigenvector
of $A(G)$ corresponding to $\lambda_1$. Assume that $\alpha$ is normalized so that $\alpha(u)=1$ at the center vertex $u$. Moreover, assume $x_1,\ldots, x_{\ell_1}$ are the entries of $\alpha$ at the vertices $v_1, \ldots v_{\ell_1}$ of $P_{\ell_1}$, and $y_1, \ldots, y_{\ell_2}$ are the entries of $\alpha$ at the vertices $v_1', \ldots, v_{\ell_2}'$ of $P_{\ell_2}$. By the Perron-Frobenius Theorem (or Corollary \ref{c1}), all $x_i$'s and $y_j$'s are positive. 
Without loss of generality, we can assume $x_1\geq y_1$. We can then obtain an isomorphic copy of $G'$ from $G$ by adding an edge
$v_1 v_2'$ and deleting an edge $v'_1v'_2$. It then follows that
\begin{align*}
    \lambda_1(G') &\geq \frac{\alpha'A_{G'}\alpha}{\|\alpha\|^2}\\
    &= \frac{\alpha'A_{G}\alpha+2x_1y_2-2y_1y_2}{\|\alpha\|^2}\\
    &\geq \frac{\alpha'A_{G}\alpha}{\|\alpha\|^2}\\
    &=\lambda_1(G).
\end{align*}
Note that the equality cannot hold since $\alpha$ is not an eigenvector of $A_{G'}$ by Lemma \ref{l1}.
Thus, the merge operation on linear outerplanar graphs strictly increases the largest eigenvalue $\lambda_1$.
\end{proof}

\begin{lemma} \label{l3}
    Let $G= K_1 \vee (P_{\ell_1} \cup P_{\ell_2} \cup \cdots \cup P_{\ell_r})$ be a linear outerplanar graph on $n\geq 8$ vertices with $\ell_1, \ell_2 \geq 2$, and let $G'$ be obtained from $G$ by applying a merge operation, i.e., $G'=K_1\vee (P_{\ell_1+\ell_2-1} \cup P_1\cup \cdots \cup P_{\ell_r})$.  Then $\lambda_n(G') < \lambda_n(G)$.
\end{lemma}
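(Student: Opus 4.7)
My plan is to mirror the proof of Lemma \ref{l2} but with an eigenvector of $\lambda_n$ in place of $\lambda_1$. Corollary \ref{c1}(ii) will provide the reverse sign pattern, so the same edge-swap argument pushes $\lambda_n$ \emph{downward} rather than upward. Concretely, I let $\alpha$ be a normalized eigenvector of $\lambda_n := \lambda_n(G)$ with $\alpha(u)=1$ and substitute $\alpha$ into the Rayleigh quotient characterization \eqref{Rayleighn} to upper bound $\lambda_n(G')$.

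In order to invoke Corollary \ref{c1}(ii) I first need $\lambda_n(G) \leq -2$. This follows from Cauchy interlacing: since $G-u$ is a linear forest on $n-1 \geq 7$ vertices, its independence number $\sum_{i=1}^r \lceil \ell_i/2 \rceil$ is at least $\lceil (n-1)/2 \rceil \geq 4$, so $G$ contains an induced copy of $K_{1,4}$, yielding $\lambda_n(G) \leq \lambda_5(K_{1,4}) = -2$. Corollary \ref{c1}(ii) then gives $\alpha(v) < 0$ for every $v \neq u$, and since the explicit formula in Lemma \ref{l1} is symmetric along each path, $\alpha(v_1) = \alpha(v_{\ell_1})$ and $\alpha(v_1') = \alpha(v_{\ell_2}')$.

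Next, I realize $G'$ as an edge swap of $G$ exactly as in Lemma \ref{l2}: writing $P_{\ell_1} = v_1 v_2 \cdots v_{\ell_1}$ and $P_{\ell_2} = v_1' v_2' \cdots v_{\ell_2}'$, I add the edge $v_1 v_2'$ and delete the edge $v_1' v_2'$, obtaining a graph isomorphic to $G'$ whose nontrivial merged path is $v_{\ell_1} \cdots v_1 v_2' \cdots v_{\ell_2}'$ (and $v_1'$ becomes the new trivial component of $G'-u$). By the endpoint symmetry observed above, I may relabel the two paths so that $\alpha(v_1) \geq \alpha(v_1')$. Then
\[
\alpha' A_{G'} \alpha - \alpha' A_{G} \alpha \;=\; 2\alpha(v_2')\bigl[\alpha(v_1) - \alpha(v_1')\bigr] \;\leq\; 0,
\]
since $\alpha(v_2')<0$ and $\alpha(v_1)-\alpha(v_1')\geq 0$. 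Hence $\lambda_n(G') \leq \alpha' A_{G'} \alpha / \|\alpha\|^2 \leq \lambda_n(G)$.

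For the strict inequality, if $\lambda_n(G') = \lambda_n(G)$ then $\alpha$ attains the minimum in the Rayleigh quotient of $A_{G'}$ and is therefore an eigenvector of $A_{G'}$ with eigenvalue $\lambda_n(G)$. Combining with $A_G \alpha = \lambda_n(G)\alpha$ gives $(A_{G'} - A_G)\alpha = 0$, and reading off the $v_1$-coordinate of this equation forces $\alpha(v_2') = 0$, contradicting $\alpha(v_2') < 0$. The main obstacle I anticipate is certifying $\lambda_n(G) \leq -2$ so that the sign hypotheses of Corollary \ref{c1}(ii) are available; once that bound is in hand, the remainder is a sign-reversed mirror of Lemma \ref{l2}.
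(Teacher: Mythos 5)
Your proof is correct and follows essentially the same route as the paper: realize $G'$ as the edge swap that adds $v_1v_2'$ and deletes $v_1'v_2'$, feed the $\lambda_n(G)$-eigenvector into the Rayleigh quotient for $A_{G'}$, and use the sign pattern from Corollary \ref{c1}(ii) together with the normalization $\alpha(v_1)\ge\alpha(v_1')$ to see the quadratic form does not increase; your strictness argument via $(A_{G'}-A_G)\alpha=0$ forcing $\alpha(v_2')=0$ is a more explicit version of the paper's one-line appeal to Lemma \ref{l1}. The one place you genuinely diverge is the bound $\lambda_n(G)\le -2$: the paper verifies $8\le n\le 10$ by computer and uses the test vector $\beta(u)=1$, $\beta(v)=-1/\sqrt{n-1}$ for $n\ge 10$, whereas you observe that $u$ together with four independent vertices of the linear forest $G-u$ (which exist since the forest on $n-1\ge 7$ vertices has independence number at least $\lceil (n-1)/2\rceil\ge 4$) induces a $K_{1,4}$, so Cauchy interlacing gives $\lambda_n(G)\le\lambda_5(K_{1,4})=-2$ uniformly for all $n\ge 8$. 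That substitution is clean, correct, and arguably preferable since it removes the case split and the computer check.
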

\begin{proof}
Consider a linear outerplanar graph $G= K_1 \vee (P_{\ell_1} \cup P_{\ell_2} \cup \cdots)$ with $\ell_1, \ell_2\geq 2$. Let $G'=K_1\vee (P_{\ell_1+\ell_2-1} \cup P_1\cup \cdots)$ be the graph after merging. Let $\lambda_n=\lambda_n(G)$ be the smallest eigenvalue of $G$ and $\alpha$ be an eigenvector
of $A_G$ corresponding to $\lambda_n$. Assume that $\alpha$ is normalized so that $\alpha(u)=1$ at the center vertex $u$. Moreover, assume $x_1,\ldots, x_{\ell_1}$ are the entries of $\alpha$ at the vertices $v_1, \ldots v_{\ell_1}$ of $P_{\ell_1}$, and $y_1, \ldots, y_{\ell_2}$ are the entries of $\alpha$ at the vertices $v_1', \ldots, v_{\ell_2}'$ of $P_{\ell_2}$.

We claim now that $\lambda_n(G) \leq -2$ for any $n\geq 8$. For $8\leq n\leq 10$, it could be easily verified by computer. Suppose now $n\geq 10$. Let $\beta \in \mathbb{R}^n$ be a column vector indexed by the vertices of $G$ such that $\beta(u) = 1$ and $\beta(v) = -\frac{1}{\sqrt{n-1}}$ for any $v\neq u$. It follows by Equation \eqref{Rayleighn} that
$$\lambda_n \leq \frac{2\sum_{ij\in E(G)} \beta(i) \beta(j)}{\beta' \beta} = 2\cdot \frac{(n-2)\lp\frac{1}{\sqrt{n-1}}\rp^2-(n-1)\frac{1}{\sqrt{n-1}}}{(n-1)\lp\frac{1}{\sqrt{n-1}}\rp^2+1} \leq -2,$$
which holds for $n\geq 10$.

Now since $\lambda_n\leq -2$, all $x_i$'s and $y_j$'s are negative by Corollary \ref{c1}. Without loss of generality, we can assume
$|x_1|\leq |y_1|$. (Here we don't assume any relation between $\ell_1$ and $\ell_2$.)
We can then obtain an isomorphic copy of $G'$ from $G$ by adding an edge
$v_1 v_2'$ and deleting an edge $v'_1v'_2$. It now follows that
\begin{align*}
    \lambda_n(G') &\leq \frac{\alpha'A_{G'}\alpha}{\|\alpha\|^2}\\
    &= \frac{\alpha'A_{G}\alpha+2x_1y_2-2y_1y_2}{\|\alpha\|^2}\\
    &\leq \frac{\alpha'A_{G}\alpha}{\|\alpha\|^2}\\
    &=\lambda(G).
\end{align*}
Here we use the assumption $|x_1|\leq |y_1|$ and $x_1, y_1, y_2$ are all negative.
Note that the equality cannot holds since $\alpha$ is not an eigenvector of $A_{G'}$ by Lemma \ref{l1}.
Thus, the merge operation on a linear outerplanar graph strictly decreases the smallest eigenvalue $\lambda_1$.
\end{proof}

Hence we have the following corollary.
\begin{corollary}\label{cor:merge_S}
  Let $G= K_1 \vee (P_{\ell_1} \cup P_{\ell_2} \cup \cdots \cup P_{\ell_r})$ be a linear outerplanar graph on $n\geq 8$ vertices with $\ell_1, \ell_2 \geq 2$, and let $G'$ be obtained from $G$ by applying a merge operation, i.e., $G'=K_1\vee (P_{\ell_1+\ell_2-1} \cup P_1\cup \cdots \cup P_{\ell_r})$  Then $S(G') > S(G)$. 
\end{corollary}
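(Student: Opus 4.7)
The plan is straightforward since this corollary is essentially the combination of Lemmas \ref{l2} and \ref{l3} that immediately precede it. The merge operation in question is identical in all three statements, so I would simply invoke the two lemmas and subtract.

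First I would observe that the hypotheses $n \geq 8$ and $\ell_1, \ell_2 \geq 2$ are precisely those of Lemma \ref{l3}, and they strictly imply the hypothesis $n \geq 5$ of Lemma \ref{l2}. Hence both lemmas apply to the pair $(G, G')$. Applying Lemma \ref{l2} gives $\lambda_1(G') > \lambda_1(G)$, and applying Lemma \ref{l3} gives $\lambda_n(G') < \lambda_n(G)$. Subtracting the second strict inequality from the first yields
\[
S(G') = \lambda_1(G') - \lambda_n(G') > \lambda_1(G) - \lambda_n(G) = S(G),
\]
which is the desired conclusion.

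There is essentially no obstacle here: the content of the corollary is entirely contained in the two preceding lemmas, and no new computation or estimate is required. The only thing worth double-checking is that the $n \geq 8$ threshold used in Lemma \ref{l3} (which in turn arose from needing $\lambda_n(G) \leq -2$ to apply Corollary \ref{c1}(ii)) is the binding hypothesis, so the corollary is stated with the same bound.
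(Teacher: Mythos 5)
Your proposal is correct and is exactly how the paper obtains this result: the corollary is stated immediately after Lemmas \ref{l2} and \ref{l3} with no further argument, since $S(G') = \lambda_1(G') - \lambda_n(G') > \lambda_1(G) - \lambda_n(G) = S(G)$ follows directly from the two strict inequalities. Your observation about the $n\geq 8$ threshold being inherited from Lemma \ref{l3} is also accurate.
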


Repeatedly applying Lemma \ref{l2} and Lemma \ref{l3}, we conclude
that the maximum spread is reached at a linear outerplanar graph with only one non-trivial path (after deleting the center vertex). Now we are ready to prove Theorem \ref{main}.

\begin{proof}[Proof of Theorem \ref{main}]
Let $G$ be a graph attaining the maximum spread among all outerplanar graphs on $n$ vertices (for sufficiently large $n$) and let $u$ be its center vertex.
By Corollary \ref{cor:merge_S}, the merge operation strictly increases
the spread of a linear outerplanar graph. Hence, we can assume that there is at most one non-trivial path in $G-u$. Thus, $G = G_\ell=K_1 \vee \left(P_\ell \cup (n-1-\ell)K_1\right)$ for some $\ell \in [n-1]$. 
Let $\lambda_1\geq \ldots \geq \lambda_n$ be the eigenvalues of $A(G_{\ell})$.
Suppose $\lambda \in \{\lambda_1, \lambda_n\}$ and $\alpha$ is a normalized eigenvector of $A(G_{\ell})$ corresponding to $\lambda$ with $\alpha(u)=1$. Let $\cx=(x_1,\ldots, x_\ell)$ be the vector of $\alpha$ restricted to the vertices of $P_\ell$. By the eigen-equation, the entry of $\alpha$ at those vertices not on $P_\ell$ and not $u$ is equal to $\frac{1}{\lambda}$.
The eigen-equation at $u$ is given by
\begin{equation} \label{eigen_u}
    \lambda =  (n-1-\ell)\frac{1}{\lambda} + \sum_{i=1}^\ell x_i.
\end{equation}
Applying Lemma \ref{l1}, we get
\begin{align*}
 \sum_{i=1}^\ell x_i &= \one' \cdot \bx\\
 &=\one' \cdot \sum_{k=0}^\infty \lambda^{-(k+1)} A_\ell^k \one \\
 &=   \sum_{k=0}^\infty \lambda^{-(k+1)} \one' A_\ell^k \one. 
\end{align*}
Plugging it into Equation \eqref{eigen_u}, we have 
\begin{equation} \label{eigen_u2}
    \lambda =    (n-1-\ell)\frac{1}{\lambda} +
  \sum_{k=0}^\infty \lambda^{-(k+1)} \one' A_\ell^k \one.  
\end{equation}

Recall that $P_{\ell} = v_1 v_2 \ldots v_{\ell}$ is the only nontrivial path in the neighborhood of $u$. Given $v_j \in V(P_{\ell})$, let $w_k(v_j)$ denote the number of walks of length $k$ starting from $v_j$
in $P_{\ell}$. Observe that for each $j \in [\ell]$, $(A_{\ell}^k \one)_j = w_k(v_j)$ and $\one' A_{\ell}^k \one = \dss_{j\in [\ell]} w_k(v_j)$. When
$k= 1$, $\one' A_\ell^k \one = \dss_{j \in [\ell]} d(v_j) = 2(\ell-2)+ 2 = 2(\ell-1)$. When $k = 2$,
it is not hard to see that $w_2(v_1) = w_2(v_{\ell}) = 2$; $w_2(v_2) = w_2(v_{\ell-1}) = 3$; and $w_2(v_j) = 4$ for all $j \in [3,\ell-2]$. Hence 
$$\one' A_\ell^2 \one = 4(\ell-4) + 3 \cdot 2 + 2 \cdot 2 = 4\ell-6.$$
Similarly, when $k = 3$, it is not hard to see that $w_3(v_1) = w_3(v_{\ell}) = 3$; $w_3(v_2) = w_3(v_{\ell-1}) = 6$; $w_3(v_3) = w_3(v_{\ell-2}) = 7$; and $w_3(v_j) = 8$ for all $j \in [4,\ell-3]$. Hence 
$$\one' A_\ell^3 \one = 8(\ell-6) + 7 \cdot 2 + 6 \cdot 2 + 3 \cdot 2 = 8(\ell-2).$$
Similarly, when $k=4$, it is not hard to see that $w_4(v_1) = w_4(v_{\ell}) = 6$; $w_4(v_2) = w_4(v_{\ell-1}) = 10$; $w_4(v_3) = w_4(v_{\ell-2}) = 14$;
$w_4(v_4) = w_4(v_{\ell-3}) = 15$;
and $w_4(v_j) = 16$ for all $j \in [5,\ell-4]$. Hence 
$$\one' A_\ell^4 \one = 16(\ell-8) + 15 \cdot 2 +
14 \cdot 2 + 10 \cdot 2 + 6 \cdot 2 = 16\ell-38.$$
Similarly, when $k=5$, it is not hard to see that $w_5(v_1) = w_5(v_{\ell}) = 10$; $w_5(v_2) = w_5(v_{\ell-1}) = 20$; $w_5(v_3) = w_5(v_{\ell-2}) = 25$;
$w_5(v_4) = w_5(v_{\ell-3}) = 30$; $w_5(v_5) = w_5(v_{\ell-4}) = 31$;
and $w_5(v_j) = 32$ for all $j \in [6,\ell-5]$. Hence 
$$\one' A_\ell^5 \one = 32(\ell-10) + 31 \cdot 2 +
30 \cdot 2 + 25 \cdot 2 + 20 \cdot 2 + 10 \cdot 2  = 32\ell-88.$$

Similarly, for $k\geq 6$, we have that
\begin{align*}
    \dss_{k=6}^{\infty} \lambda^{-k} \one' A_\ell^k \one \leq \dss_{k=6}^{\infty} \frac{2^k \ell}{\lambda^k} = O\left(\frac{\ell}{\lambda^6}\right).
\end{align*}

Multiplying both sides of \eqref{eigen_u2} by $\lambda$ and simplifying it, we get
\begin{equation} \label{eigen_main}
    \lambda^2 =    (n-1) +
    \frac{2(\ell-1)}{\lambda}
    + \frac{4\ell-6}{\lambda^2}
+ \frac{8(\ell-2)}{\lambda^3} + \frac{16\ell-38 }{\lambda^4} 
+  \frac{32\ell-88}{\lambda^5} +  O\left(\frac{\ell}{\lambda^6}\right).  
\end{equation}
Equation \eqref{eigen_main} has two real roots, which determines $\lambda_1$ and $\lambda_n$. $\lambda_1$ is near $\sqrt{n-1}$ and
$\lambda_n$ is near $-\sqrt{n-1}$.
By Lemma~\ref{lem:laurent} in the appendix, $\lambda_1$ has a series expansion
\begin{equation} 
    \lambda_1=\sqrt{n-1}+ c_1+ \frac{c_2}{\sqrt{n-1}}+ \frac{c_3}{n-1}+
\frac{c_4}{(n-1)^{3/2}}+ \frac{c_5}{(n-1)^2} + \frac{c_6}{(n-1)^{5/2}}+ O\left(\frac{1}{(n-1)^{3}}\right). 
\end{equation}
Plugging it into Equation \eqref{eigen_main}, and comparing the terms,
using SageMath, we compute the values of all $c_i$s as follows:
\begin{align}
    c_1&= \frac{\ell-1}{n-1}, \label{eq:c1} \\
    c_2&=2\left(\frac{\ell-1}{n-1}\right) - \frac{3}{2} \left(\frac{\ell-1}{n-1}\right)^2,  \label{eq:c2} \\
    c_3&= 4\left(\frac{\ell-1}{n-1}\right) - 8 \left(\frac{\ell-1}{n-1}\right)^2 
    +4\left(\frac{\ell-1}{n-1}\right)^3,\label{eq:c3} \\
    c_4&= -1 + 8\left(\frac{\ell-1}{n-1}\right) - 30 \left(\frac{\ell-1}{n-1}\right)^2 
    +35\left(\frac{\ell-1}{n-1}\right)^3
    -\frac{105}{8} \left(\frac{\ell-1}{n-1}\right)^4,  \label{eq:c4} \\ 
    c_5&= -4+ 20\left(\frac{\ell-1}{n-1}\right) 
    - 96\left(\frac{\ell-1}{n-1}\right)^2 
   +192\left(\frac{\ell-1}{n-1}\right)^3
    -160 \left(\frac{\ell-1}{n-1}\right)^4 
     +48 \left(\frac{\ell-1}{n-1}\right)^5.  \label{eq:c5} \\ 
       c_6&= -11+ 62\left(\frac{\ell-1}{n-1}\right) 
    - \frac{595}{2}\left(\frac{\ell-1}{n-1}\right)^2    +840\left(\frac{\ell-1}{n-1}\right)^3
        -1155 \left(\frac{\ell-1}{n-1}\right)^4 \nonumber \\
     & \hspace*{1cm}+\frac{3003}{4} \left(\frac{\ell-1}{n-1}\right)^5
 -\frac{3003}{16} \left(\frac{\ell-1}{n-1}\right)^6.     
     \label{eq:c6}
\end{align}
By a similar calculation, we get the following series expansion of $\lambda_n$:
\begin{equation} 
    \lambda_n=-\sqrt{n-1}+ c_1- \frac{c_2}{\sqrt{n-1}}+ \frac{c_3}{n-1}-
\frac{c_4}{(n-1)^{3/2}} + \frac{c_5}{(n-1)^2}-
\frac{c_6}{(n-1)^{5/2}}+ O\left(\frac{1}{(n-1)^3}\right). 
\end{equation}
Here $c_1, c_2, c_3,c_4,c_5,c_6$ are the same quantities as in Equations \eqref{eq:c1}, \eqref{eq:c2}, \eqref{eq:c3}, \eqref{eq:c4}, \eqref{eq:c5}, and \eqref{eq:c6}.
Observe that all $c_i$s are bounded since they are polynomials of $\frac{\ell-1}{n-1}$ over the interval $[0,1]$.
Thus, the spread of $G_\ell$, written by $f(\ell)$, can be expressed as follows:
  $$ f(\ell)= \lambda_1-\lambda_n = 2\sqrt{n-1} + \frac{2c_2}{\sqrt{n-1}} + \frac{2c_4}{(n-1)^{3/2}}
   + \frac{2c_6}{(n-1)^{5/2}} + O\left(\frac{1}{(n-1)^3}\right).$$
Since
    $$c_2(\ell) =2\left(\frac{\ell-1}{n-1}\right) - \frac{3}{2} \left(\frac{\ell-1}{n-1}\right)^2
    = \frac{2}{3}- \frac{3}{2}\left(\frac{\ell-1}{n-1}-\frac{2}{3}\right)^2
    \leq \frac{2}{3},$$
the function $c_2$ reaches the maximum at $\ell_1=\frac{2}{3}(n-1)+1$.
Let $\ell_0=\left\lceil\frac{2n-1}{3}\right\rceil$ be the target argument maximum of $f(\ell)$.
We have 
$$f(\ell_0)= 2\sqrt{n-1} + \frac{4}{3\sqrt{n-1}}+O\left(\frac{1}{(n-1)^{3/2}}\right).$$
\begin{claim}
There is a constant $C$ such that all argument maximum point(s) of $f(\ell)$ must belong to the interval
$$(\ell_1-C\sqrt{n-1}, \ell_1+C\sqrt{n-1}).$$
\end{claim}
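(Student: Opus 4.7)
The plan is to use the asymptotic expansion
\[
f(\ell) = 2\sqrt{n-1} + \frac{2c_2}{\sqrt{n-1}} + \frac{2c_4}{(n-1)^{3/2}} + \frac{2c_6}{(n-1)^{5/2}} + O\!\left(\frac{1}{(n-1)^3}\right)
\]
that has already been derived, together with the explicit formula
$c_2 = \frac{2}{3} - \frac{3}{2}\!\left(\frac{\ell-1}{n-1}-\frac{2}{3}\right)^{\!2}$, to argue that any $\ell$ far from $\ell_1$ cannot beat $f(\ell_0)$.

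The first step is to note that $c_4$ and $c_6$, being fixed polynomials in $t := \frac{\ell-1}{n-1}$ evaluated on $[0,1]$, are bounded by absolute constants independent of $n$ and $\ell$. Hence there is an absolute constant $M$ so that
\[
f(\ell) \le 2\sqrt{n-1} + \frac{2c_2(\ell)}{\sqrt{n-1}} + \frac{M}{(n-1)^{3/2}}
\]
for all $\ell \in [n-1]$ and all sufficiently large $n$. In the same way, the lower bound $f(\ell_0) \ge 2\sqrt{n-1} + \frac{4}{3\sqrt{n-1}} - \frac{M'}{(n-1)^{3/2}}$ holds for some absolute $M'$, since $c_2(\ell_0) = \frac{2}{3} + O(1/(n-1)^2)$ as $\ell_0$ differs from $\ell_1$ by at most $1$.

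Next, suppose $|\ell-\ell_1| \ge C\sqrt{n-1}$ for some constant $C$ to be chosen. Then $\left|\frac{\ell-1}{n-1} - \frac{2}{3}\right| \ge \frac{C}{\sqrt{n-1}}$, which gives
\[
c_2(\ell) \le \frac{2}{3} - \frac{3C^2}{2(n-1)},
\]
and therefore
\[
f(\ell) \le 2\sqrt{n-1} + \frac{4}{3\sqrt{n-1}} - \frac{3C^2}{(n-1)^{3/2}} + \frac{M}{(n-1)^{3/2}}.
\]
Comparing with the lower bound on $f(\ell_0)$, we obtain $f(\ell_0) - f(\ell) \ge \frac{3C^2 - M - M'}{(n-1)^{3/2}}$. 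Choosing $C$ with $3C^2 > M + M'$, this difference is strictly positive for all sufficiently large $n$, so no such $\ell$ can be an argument maximum of $f$.

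The main obstacle is merely the bookkeeping of the absolute constants $M$ and $M'$ absorbed in the $O(\cdot)$ error terms. This requires knowing that the remainder in the Laurent expansion of $\lambda_1$ and $\lambda_n$ is uniform in $\ell \in [n-1]$, which follows from the uniform convergence of the series $\sum_{k\ge 6}\lambda^{-k}\mathbf{1}'A_\ell^k\mathbf{1}$ already noted before equation \eqref{eigen_main} and from the fact, proved in Lemma~\ref{lem:laurent} in the appendix, that the Laurent coefficients $c_i$ are polynomials in the bounded quantity $\frac{\ell-1}{n-1}$. Once this uniformity is established the argument above is a one-line comparison, and the claim follows.
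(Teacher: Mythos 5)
Your proof is correct and follows essentially the same route as the paper's: bound $c_2(\ell)$ by $\frac{2}{3}-\frac{3C^2}{2(n-1)}$ for $\ell$ outside the interval, absorb the $c_4$, $c_6$ and remainder terms into a uniform $O\left(\frac{1}{(n-1)^{3/2}}\right)$ error, and choose $C$ large enough that the resulting deficit dominates. Your explicit bookkeeping of the constants $M$, $M'$ and the remark on uniformity of the error in $\ell$ only make explicit what the paper leaves implicit.
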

\begin{proof}
Otherwise, for any $\ell$ not in this interval, we have
$$c_2(\ell)\leq \frac{2}{3}-\frac{3C^2}{2(n-1)}.$$
This implies
\begin{align*}
    f(\ell) &\leq 2\sqrt{n-1} + 2\frac{\frac{2}{3}-\frac{3C^2}{2(n-1)}}{\sqrt{n-1}} +O\left(\frac{1}{(n-1)^{3/2}}\right)< f(\ell_0).
\end{align*}
Here we choose the constant $C$ big enough
such that $$ -\frac{3C^2}{(n-1)^{3/2}} + O\left(\frac{1}{(n-1)^{3/2}}\right)<0.$$ \end{proof}
From now on,  we assume $\ell \in (\ell_1-C\sqrt{n-1}, \ell_1+C\sqrt{n-1})$.
Let us compute $f(\ell+1)-f(\ell)$. We have
\begin{align*}
    c_2(\ell+1)-c_2(\ell) &= \frac{2}{n-1}-\frac{3}{2}\frac{(2\ell-1)}{(n-1)^2}=\frac{2(n-1)-\frac{3}{2}(2\ell-1)}{(n-1)^2},\\
    c_4(\ell+1)-c_4(\ell) &= \frac{8}{n-1}-30\frac{(2\ell-1)}{(n-1)^2}
    + 35\frac{(3\ell^2-3\ell+1)}{(n-1)^3} -\frac{105}{8}\frac{(4\ell^3-6\ell^2+4\ell-1)}{(n-1)^4},\\
    c_6(\ell+1)-c_6(\ell) &=O\left(\frac{1}{n-1}\right).
\end{align*}
Plugging $\ell=\ell_1\cdot \left(1+ O\left(\frac{1}{\sqrt{n-1}}\right)\right)$
into $c_4(\ell+1)-c_4(\ell)$, we have
\begin{align*} 
  c_4(\ell+1)-c_4(\ell) &=   \frac{1}{n-1}\left(
 8-30\cdot 2\cdot \frac{2}{3} + 35  \cdot 3\cdot \left(\frac{2}{3}\right)^2
 -\frac{105}{8} \cdot 4\cdot \left(\frac{2}{3}\right)^3+ O\left(\frac{1}{\sqrt{n-1}}\right)\right)\\
 &=-\frac{8}{9(n-1)} + O\left(\frac{1}{(n-1)^{3/2}}\right).
\end{align*}
Therefore,
we have
\begin{align}
    f(\ell+1)-f(\ell) &=2 \frac{c_2(\ell+1)-c_2(\ell)}{\sqrt{n-1}}
    + 2 \frac{c_4(\ell+1)-c_4(\ell)}{(n-1)^{3/2}} 
    + 2\frac{c_6(\ell+1)-c_6(\ell)}{(n-1)^{5/2}} 
    + O\left(\frac{1}{(n-1)^{3}}\right) \nonumber \\
    &=\frac{4(n-1)-3(2\ell-1)}{(n-1)^{5/2}}
-\frac{16}{9(n-1)^{5/2}} +  O\left(\frac{1}{(n-1)^{3}}\right) \nonumber
\\
&= \frac{4n-6\ell-\frac{25}{9}}{(n-1)^{5/2}}
+  O\left(\frac{1}{(n-1)^{3}}\right). \label{eq:diff}
\end{align}
When $\ell\geq \ell_0$, we have
$$4n-6\ell-\frac{25}{9}\leq 4n-6\ell_0-\frac{25}{9}\leq 4n -6\cdot \frac{2n-1}{3}-\frac{25}{9}=-\frac{7}{9}<0.$$
Plugging it into Equation \eqref{eq:diff}, we have

$$f(\ell+1)-f(\ell) \leq  \frac{-\frac{7}{9}}{(n-1)^{5/2}}
+  O\left(\frac{1}{(n-1)^{3}}\right) <0.$$
When $\ell\leq \ell_0-1$, we have
$$4n-6\ell-\frac{25}{9}\geq 4n-6(\ell_0-1)-\frac{25}{9}\geq 4n - 6 \cdot\left (\frac{2n-1}{3}-\frac{1}{3}\right) -\frac{25}{9}=\frac{11}{9}>0.
$$
Thus, $$f(\ell+1)- f(\ell)\geq  \frac{\frac{11}{9}}{(n-1)^{5/2}}
+  O\left(\frac{1}{(n-1)^{3}}\right) >0.$$
Therefore, $f(\ell)$ reaches the unique maximum at 
$\ell_0$. This completes the proof of the theorem.
\end{proof}

\section{Maximum spread over all planar graphs}\label{sec:planar}
Let $G$ be the planar graph on $n$ vertices with maximum spread, and $\lambda_1\geq \ldots \geq \lambda_n$ be the eigenvalues of the adjacency matrix of $G$. 

\subsection{Structure of maximum spread planar graphs}
As a first step, we want to show that $G$ must contain $K_{2,n-2}$ as subgraph. We recall the result of Tait and Tobin \cite{TT2017} on the maximum spectral radius of planar graphs.

\begin{theorem}\cite{TT2017}\label{thm:taittobin}
For $n$ sufficiently large, the planar graph on $n$ vertices with maximal spectral radius is $P_2 \vee P_{n-2}$.
\end{theorem}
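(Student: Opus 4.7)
The plan is to combine (i) a spectral lower bound from the candidate graph, (ii) a Perron-eigenvector concentration argument singling out two central vertices, (iii) planarity-preserving edge swaps, and (iv) a final path-extension step analogous to those in Section~\ref{sec:outerplanar}. First, since $K_{2,n-2}$ is a subgraph of $P_2 \vee P_{n-2}$, a Laurent expansion in the spirit of the proof of Theorem~\ref{main} gives $\lambda_1(P_2 \vee P_{n-2}) = \sqrt{2n-4} + \Theta(1)$, so any extremal planar graph $G$ satisfies $\lambda_1(G) \geq \sqrt{2n-4} + \Theta(1)$. Meanwhile, Euler's formula yields $|E(G)| \leq 3n-6$ and hence $\lambda_1(G) \leq \sqrt{6n-12}$, pinning $\lambda_1(G)$ to order $\sqrt{n}$.

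Next, let $\alpha$ be the Perron eigenvector normalized so that $\max_v \alpha(v) = 1$, attained at a vertex $u_1$. From $\lambda_1 \alpha(u_1) = \sum_{v \sim u_1} \alpha(v)$, combined with $\sum_v \alpha(v)^2$ bounded after rescaling and the lower bound on $\lambda_1$, one deduces that $\deg(u_1) = (1-o(1))n$ and that there exists a second vertex $u_2 \neq u_1$ with $\alpha(u_2) = 1 - o(1)$; otherwise $\sum_{v \sim u_1} \alpha(v)$ could not attain order $\sqrt{n}$. By symmetry $\deg(u_2) = (1-o(1))n$. Every other vertex $w$ then has $\alpha(w) = o(1)$, and a local edge swap that deletes an edge $wv$ with $\alpha(v) = o(1)$ and inserts $u_i w$ strictly increases the Rayleigh quotient; planarity is preserved because $u_1$ and $u_2$ can be placed in two faces that jointly cover any such $w$. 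Iterating yields $K_{2,n-2} \subseteq G$, and adding the edge $u_1 u_2$ if absent also strictly increases $\lambda_1$, so $G = P_2 \vee H$ for some graph $H$ on $n-2$ vertices.

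Since $G$ is planar, $H$ must be outerplanar. A merging argument in the spirit of Lemma~\ref{l2}, using that the Perron eigenvector of $P_2 \vee H$ is strictly positive on $H$, then shows that $\lambda_1(P_2 \vee H)$ is strictly maximized when $H$ has a single nontrivial path component. Finally, a refined Laurent-series analysis of $\lambda_1(P_2 \vee (P_\ell \cup (n-2-\ell) P_1))$, parallel to the computation in the proof of Theorem~\ref{main} but restricted to the largest eigenvalue, pins the optimum at $\ell = n-2$ and yields $G = P_2 \vee P_{n-2}$. The main obstacle is the eigenvector-concentration step combined with the planarity-preserving swaps: one must quantitatively rule out a third vertex carrying constant eigenvector weight, and verify that every proposed edge swap admits a planar embedding without creating a $K_5$ or $K_{3,3}$ minor. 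A secondary delicate point is the final path-length optimization, which requires enough Laurent-series precision to separate $\lambda_1(P_2 \vee P_{n-2})$ from $\lambda_1(P_2 \vee (P_\ell \cup (n-2-\ell) P_1))$ for every $\ell < n-2$ and large $n$.
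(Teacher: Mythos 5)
This statement is not proved in the paper at all: it is quoted verbatim from Tait and Tobin \cite{TT2017} and used as a black box (only in the proof of Lemma~\ref{lem:planarlambdan}, to upper-bound $\lambda_1$). So there is no in-paper proof to compare against; your proposal has to be judged as a reconstruction of the Tait--Tobin argument. Its overall architecture (lower bound from $K_{2,n-2}$, Perron-eigenvector concentration on two hub vertices, $K_{3,3}$-freeness to cap all other degrees, edge swaps forcing $K_{2,n-2}\subseteq G$, then optimizing the leftover graph) is indeed the right skeleton, and it parallels Section~\ref{sec:planar} of this paper.

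There are, however, two concrete gaps. First, the step ``adding the edge $u_1u_2$ if absent also strictly increases $\lambda_1$, so $G=P_2\vee H$'' is not valid as stated: adding $u_1u_2$ can destroy planarity. The double wheel $(K_1\cup K_1)\vee C_{n-2}$ contains $K_{2,n-2}$, is planar, has the same number of edges $3n-6$ as $P_2\vee P_{n-2}$, and has $\lambda_1=1+\sqrt{2n-3}$, which agrees with $\lambda_1(P_2\vee P_{n-2})$ to within $O(1)$; it cannot be dismissed by a swap argument and must be beaten by an explicit eigenvalue comparison (exactly why Corollary~\ref{cor:planarstruct} in this paper carries the double wheel as a separate case). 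Second, ``$H$ must be outerplanar'' is the wrong conclusion and too weak for your merging step: what you need, and what actually holds, is that $H$ is a linear forest --- $K_{3,3}$-freeness forces $\Delta(H)\le 2$, and once $u_1u_2$ is an edge a cycle component of $H$ would give a $K_5$ minor. A smaller point in your favor: the final optimization over $\ell$ needs no Laurent-series precision, since $K_2\vee(P_\ell\cup(n-2-\ell)P_1)$ is a spanning subgraph of the connected graph $K_2\vee P_{n-2}$ and the spectral radius is strictly monotone under edge addition; the delicate expansions are needed only for the spread (where $\lambda_n$ is not monotone), not for $\lambda_1$. The eigenvector-concentration and planarity-preserving-swap steps you flag as the main obstacles are asserted rather than carried out, but they are the genuinely technical part of \cite{TT2017} and your outline of them is at least pointed in the right direction.
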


We first give some upper and lower bounds on $\lambda_1(G)$ and $|\lambda_n(G)|$ when $n$ is sufficiently large. We use known expressions for the eigenvalues of a join of two regular graphs~\cite[pg.19]{BH2012}.

\begin{lemma}\cite{BH2012}\label{lem:joinreglemma}
Let $G$ and $H$ be regular graphs with degrees $k$ and $\ell$ respectively. Suppose that $|V(G)| = m$ and $|V(H)| = n$. Then, the characteristic polynomial of $G\vee H$ is $p_{G\vee H}(t) = ((t-k)(t-\ell)-mn)\frac{p_G(t)p_H(t)}{(t-k)(t-\ell)}$. In particular, if the eigenvalues of $G$ are $k = \lambda_1 \ge \ldots \ge \lambda_m$ and the eigenvalues of $H$ are $\ell = \mu_1 \ge \ldots \geq \mu_n$, then the eigenvalues of $G\vee H$ are $\{\lambda_i: 2\le i\le m\} \cup \{\mu_j: 2\le j\le n\} \cup \{x: (x-k)(x-\ell)-mn = 0\}$. 
\end{lemma}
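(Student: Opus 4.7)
The plan is to decompose $\mathbb{R}^{m+n}$ into two mutually orthogonal $A(G\vee H)$-invariant subspaces --- a two-dimensional ``symmetric'' subspace $U:=\operatorname{span}\{(\one_m,0),(0,\one_n)\}$ and its orthogonal complement $U^{\perp}$ --- and then read off the characteristic polynomial on each piece separately. Writing
$$A(G\vee H)=\begin{pmatrix} A(G) & J_{m\times n}\\ J_{n\times m} & A(H)\end{pmatrix},$$
the regularity hypotheses give $A(G)\one_m=k\one_m$, $A(H)\one_n=\ell\one_n$, $J_{m\times n}\one_n=n\one_m$, and $J_{n\times m}\one_m=m\one_n$. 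Hence $U$ is invariant, and the induced action on $U$, in the basis $\{(\one_m,0),(0,\one_n)\}$, is represented by the $2\times 2$ matrix $\begin{pmatrix}k & n\\ m & \ell\end{pmatrix}$, whose characteristic polynomial is $(t-k)(t-\ell)-mn$.

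Next I would verify invariance of $U^{\perp}$. A vector $(v,w)\in\mathbb{R}^{m+n}$ lies in $U^{\perp}$ iff $\one_m' v=0$ and $\one_n' w=0$. For any such pair, $J_{n\times m}v=\one_n(\one_m' v)=0$ and $J_{m\times n}w=0$, so the off-diagonal blocks kill $U^{\perp}$, and the induced action decouples as $A(G)$ restricted to $\{v:\one_m' v=0\}$ together with $A(H)$ restricted to $\{w:\one_n' w=0\}$. Because $\one_m$ is the eigenvector of $A(G)$ associated with the eigenvalue $k$ (the top eigenvalue by regularity), the restricted matrix on $\one_m^{\perp}$ has characteristic polynomial $p_G(t)/(t-k)$ with spectrum $\{\lambda_2,\ldots,\lambda_m\}$, and analogously for $H$.

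Multiplying the characteristic polynomials from $U$ and $U^{\perp}$ gives
$$p_{G\vee H}(t)=\bigl((t-k)(t-\ell)-mn\bigr)\cdot\frac{p_G(t)\,p_H(t)}{(t-k)(t-\ell)},$$
and the explicit list of eigenvalues follows immediately. The only minor bookkeeping point is to confirm that $p_G(t)/(t-k)$ is a genuine polynomial: this is automatic, since $(t-k)$ always divides $p_G(t)$ via the eigenvector $\one_m$, and in the disconnected case where $k$ has higher multiplicity the extra copies simply reappear among $\lambda_2,\ldots,\lambda_m$ (and similarly for $H$). No substantive obstacle arises; the whole statement reduces to this short linear-algebra decomposition together with computing the characteristic polynomial of the $2\times 2$ quotient matrix.
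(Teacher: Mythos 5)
Your proof is correct; the paper itself gives no argument for this lemma (it is quoted from \cite{BH2012}), and your invariant-subspace decomposition into $U=\operatorname{span}\{(\one_m,0),(0,\one_n)\}$ and $U^{\perp}$ is exactly the standard argument found in that reference, including the correct handling of the case where $k$ or $\ell$ has multiplicity greater than one. Nothing further is needed.
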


We will apply Lemma~\ref{lem:joinreglemma} to the graphs $(2K_1)\vee C_{n-2}$ and $K_2\vee C_{n-2}$. 

\begin{lemma}\label{lem:planarlambdan}
\[\sqrt{2n-4} - \frac{3}{2}-O\left(\frac{1}{\sqrt{n}}\right) \le |\lambda_n| \le \lambda_1 \le \sqrt{2n-4} +\frac{3}{2}+O\left(\frac{1}{\sqrt{n}}\right)  .\]
\end{lemma}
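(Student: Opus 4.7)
The plan is to prove the three inequalities separately. The middle inequality $|\lambda_n| \le \lambda_1$ is immediate from the Perron--Frobenius theorem applied to the non-negative matrix $A(G)$: its spectral radius is exactly $\lambda_1$, so $\lambda_1 \ge |\lambda_i|$ for every eigenvalue $\lambda_i$ of $A(G)$.

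For the upper bound on $\lambda_1$, I would invoke Theorem~\ref{thm:taittobin} to get $\lambda_1(G) \le \lambda_1(P_2 \vee P_{n-2})$. Since $P_2 = K_2$ and $P_{n-2}$ is a subgraph of $C_{n-2}$, we have $P_2 \vee P_{n-2} \subseteq K_2 \vee C_{n-2}$, so edge monotonicity of the spectral radius gives $\lambda_1(P_2 \vee P_{n-2}) \le \lambda_1(K_2 \vee C_{n-2})$; the comparison graph $K_2 \vee C_{n-2}$ need not be planar, but that is irrelevant for an upper bound. Since $K_2$ is $1$-regular on $2$ vertices and $C_{n-2}$ is $2$-regular on $n-2$ vertices, Lemma~\ref{lem:joinreglemma} applies. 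The non-principal eigenvalues of the two factors lie in $[-2,2]$, so for large $n$ the spectral radius of $K_2 \vee C_{n-2}$ equals the larger root of $(x-1)(x-2) - 2(n-2) = 0$, namely $\frac{3 + \sqrt{8n-15}}{2}$. Writing $\frac{\sqrt{8n-15}}{2} = \sqrt{2n - 15/4} = \sqrt{2n-4} + O(1/\sqrt{n})$ then yields $\lambda_1(G) \le \sqrt{2n-4} + \tfrac{3}{2} + O(1/\sqrt{n})$.

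For the lower bound on $|\lambda_n|$, I would exploit the extremality of $G$ by comparing with the planar graph $H := (2K_1) \vee C_{n-2}$, which is the bipyramid over $C_{n-2}$ and clearly planar. Applying Lemma~\ref{lem:joinreglemma} with $2K_1$ being $0$-regular on $2$ vertices and $C_{n-2}$ being $2$-regular on $n-2$ vertices, the principal eigenvalues of $H$ are the roots of $x(x-2) - 2(n-2) = 0$, i.e. $1 \pm \sqrt{2n-3}$, while the remaining eigenvalues lie in $[-2,2]$. Hence for large $n$, $\lambda_1(H) = 1 + \sqrt{2n-3}$ and $\lambda_n(H) = 1 - \sqrt{2n-3}$, so $S(H) = 2\sqrt{2n-3}$. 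Because $G$ has maximum spread among planar graphs on $n$ vertices, $S(G) \ge S(H)$, and consequently
\[
|\lambda_n(G)| = S(G) - \lambda_1(G) \ge 2\sqrt{2n-3} - \lambda_1(G).
\]
Substituting the upper bound on $\lambda_1(G)$ and using $\sqrt{2n-3} = \sqrt{2n-4} + O(1/\sqrt{n})$ produces the desired $|\lambda_n(G)| \ge \sqrt{2n-4} - \tfrac{3}{2} - O(1/\sqrt{n})$. I do not expect a serious obstacle; the only non-routine choice is picking the two comparison graphs $K_2 \vee C_{n-2}$ and $(2K_1) \vee C_{n-2}$, precisely so that both factors in each join are regular and Lemma~\ref{lem:joinreglemma} delivers closed-form expressions for the extremal eigenvalues.
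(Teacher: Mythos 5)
Your proposal is correct and follows essentially the same route as the paper: the upper bound comes from Theorem~\ref{thm:taittobin} together with $\lambda_1(P_2\vee P_{n-2})\le\lambda_1(K_2\vee C_{n-2})$ and Lemma~\ref{lem:joinreglemma}, and the lower bound comes from comparing $S(G)$ with the spread of an explicit planar graph. The only (immaterial) difference is your choice of comparison graph $(2K_1)\vee C_{n-2}$ with spread $2\sqrt{2n-3}$, where the paper uses $K_{2,n-2}$ with spread $2\sqrt{2n-4}$.
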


\begin{proof}
By Lemma~\ref{lem:joinreglemma}, $\lambda_1(K_2\vee C_{n-2})$ is the largest root of $(x-1)(x-2)-2(n-2)=0$, which is $\frac32 + \frac12 \sqrt{8n-15}$. Now
by Theorem~\ref{thm:taittobin}, we have $\lambda_1 \le \lambda_1(K_2\vee P_{n-2})\le \lambda_1(K_2\vee C_{n-2}) = \frac32 + \frac12 \sqrt{8n-15}$.
Note that $\frac32 + \frac{1}{2}\sqrt{8n-15} = \frac32 +\sqrt{2n-4} +O\left(\frac{1}{\sqrt{n}}\right)$. Hence, $-\lambda_n \le \lambda_1 \leq \frac32+\sqrt{2n-4} + O\left(\frac{1}{\sqrt{n}}\right)$. Since $G$ is the planar graph with maximal spread and $\lambda_1(K_{2,n-2}) = -\lambda_n(K_{2,n-2}) = \sqrt{2(n-2)}$, we have that 
$$\lambda_1 - \lambda_n \ge S(K_{2, n-2}) =\sqrt{2(n-2)}-(-\sqrt{2(n-2)}) = 2\sqrt{2n-4}.$$ Hence, $-\lambda_n \ge \sqrt{2n-4} - \frac32 - O\left(\frac{1}{\sqrt{n}}\right)$. 
\end{proof}

For the rest of this section, let ${\bf x}$ and ${\bf z}$ be the eigenvectors of $A(G)$ corresponding to the eigenvalues $\lambda_1$ and $\lambda_n$ respectively. For convenience, let ${\bf x}$ and ${\bf z}$ be indexed by the vertices of $G$. By the Perron-Frobenius theorem, we may assume that all entries of ${\bf x}$ are positive. We also assume that $\cx$ and $\cz$ are normalized so that the maximum absolute values of the entries of $\cx$ and $\cz$ are equal to $1$, and so
there are vertices $u_0$ and $w_0$ with ${\bf x}_{u_0} = |\cz_{w_0}| = 1$. 

Let $V_+=\{v\colon {\bf z}_v> 0\}$, $V_0=\{v\colon {\bf z}_v= 0\}$,
and $V_-=\{v\colon {\bf z}_v < 0\}$. 
Since $\cz$ is a non-zero vector, at least one of $V_{+}$ and $V_{-}$ is non-empty. By considering the eigen-equations of $\lambda_n \sum_{v\in V_{+}} \cz_v$ or $\lambda_n \sum_{v\in V_{-}} \cz_v$, both $V_{+}$ and $V_{-}$ are non-empty.
For any vertex subset $S$, we define the \textit{volume} of $S$, denoted by $\vol(S)$, as 
$\vol(S)= \sum_{v\in S} |\cz_v|$. 
In the following lemmas, we use the bounds of $\lambda_n$ to deduce some information on $V_{+}$, $V_{-}$ and $V_0$.

\begin{lemma}\label{lem:V0}
$|V_0| \leq \lp\frac32+o(1)\rp\sqrt{2n-4}$. 
\end{lemma}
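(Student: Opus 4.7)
The plan is to exhibit $\lambda_n$ as an eigenvalue of $A(G - V_0)$ via the eigen-equation, then bound $|\lambda_n|$ from above using the Tait--Tobin theorem applied to $G - V_0$ and combine with the lower bound from Lemma~\ref{lem:planarlambdan}.

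The key observation is that at every $v \in V_0$, the eigen-equation $\lambda_n \cz_v = \sum_{u \sim v} \cz_u$ combined with $\cz_v = 0$ yields $\sum_{u \sim v} \cz_u = 0$. Setting $\tilde{\cz} := \cz|_{V \setminus V_0}$, for every $v \in V \setminus V_0$,
\[
  (A(G - V_0)\tilde{\cz})_v \;=\; \sum_{u \sim v,\, u \notin V_0} \cz_u \;=\; \sum_{u \sim v} \cz_u \;=\; \lambda_n \cz_v,
\]
so $\tilde{\cz}$ is an eigenvector of $A(G - V_0)$ with eigenvalue $\lambda_n$. By Cauchy interlacing, $\lambda_n(G - V_0) \ge \lambda_n(G)$, and since $\lambda_n(G)$ is itself an eigenvalue of $A(G - V_0)$, we conclude $\lambda_n(G - V_0) = \lambda_n(G)$. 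Because $G - V_0$ is planar on $n - |V_0|$ vertices, Theorem~\ref{thm:taittobin} together with Lemma~\ref{lem:joinreglemma} gives
\[
  |\lambda_n(G)| \;=\; |\lambda_n(G - V_0)| \;\le\; \lambda_1(G - V_0) \;\le\; \tfrac{3}{2} + \sqrt{2(n - |V_0|) - \tfrac{15}{4}}.
\]
Combining with $|\lambda_n(G)| \ge \sqrt{2n-4} - \tfrac{3}{2} - O(1/\sqrt{n})$ from Lemma~\ref{lem:planarlambdan}, and using $\sqrt{a} - \sqrt{b} = (a-b)/(\sqrt{a}+\sqrt{b})$ together with $|V_0| = o(n)$, yields a bound on $|V_0|$.

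The main obstacle is extracting the sharp constant $\tfrac{3}{2}$ claimed in the lemma: the direct telescoping of the two $\pm \tfrac{3}{2}$ terms above only gives $|V_0| \le (3 + o(1))\sqrt{2n-4}$, off by a factor of two. Closing this gap likely requires either a sharper upper bound on $|\lambda_n(G - V_0)|$ (exploiting the fact that $K_{2, m-2}$ maximizes $|\lambda_n|$ among bipartite planar graphs on $m$ vertices, and arguing that $G - V_0$ is close to bipartite in the relevant sense) or a sharper lower bound on $|\lambda_n(G)|$ (using $S(G) \ge 2\sqrt{2n-4}$ together with a tighter control of $\lambda_1(G)$ coming from extremality). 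Either refinement replaces the constant $3$ in $\sqrt{2n-4} - \sqrt{2(n-|V_0|) - 4} \le 3 + o(1)$ with $\tfrac{3}{2} + o(1)$, which immediately produces the claimed bound.
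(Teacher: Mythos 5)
Your opening observation is correct and nice: since $\cz$ vanishes on $V_0$, the restriction $\tilde\cz$ of $\cz$ to $V\setminus V_0$ satisfies $(A(G-V_0)\tilde\cz)_v=\sum_{u\sim v}\cz_u=\lambda_n\cz_v$ for every $v\notin V_0$, and $\tilde\cz\neq 0$ because $V_+$ is nonempty, so $\lambda_n(G)$ really is an eigenvalue of the planar graph $G-V_0$ on $n-|V_0|$ vertices. But the proof as written does not establish the lemma: bounding $|\lambda_n(G)|$ by $\lambda_1(G-V_0)\le \frac32+\sqrt{2(n-|V_0|)-\frac{15}{4}}$ and subtracting the lower bound $\sqrt{2n-4}-\frac32-o(1)$ pays the additive $\frac32$ twice, and you correctly compute that this only yields $|V_0|\le(3+o(1))\sqrt{2n-4}$ --- a constant twice the one claimed. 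The final paragraph, where the factor of $2$ would have to be recovered, is explicitly speculative (``likely requires either \dots or \dots''), so this is a genuine gap, not a complete alternative proof. The clean way to close it within your framework is not the bipartite-approximation heuristic you sketch but the sharp planar bound of Yuan and Shu \cite{Yuan-Shu1999} (already in the paper's bibliography): every planar graph $H$ on $m$ vertices has $\lambda_{\min}(H)\ge -\sqrt{2m-4}$. Applied to $G-V_0$ this gives $|\lambda_n(G)|\le\sqrt{2(n-|V_0|)-4}$ with no additive loss, and then $\sqrt{2n-4}-\sqrt{2(n-|V_0|)-4}\le\frac32+o(1)$ yields exactly $|V_0|\le(\frac32+o(1))\sqrt{2n-4}$.

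For comparison, the paper avoids external spectral extremal theorems entirely: it iterates the eigen-equation twice on $V_+$ to get $|\lambda_n|^2\,\vol(V_+)\le \max_{y\in V_+}|E(N(y)\cap V_-,V_+)|\cdot\vol(V_+)\le(2(n-|V_0|)-4)\vol(V_+)$, using only that a bipartite planar graph on $m$ vertices has at most $2m-4$ edges. Comparing $|\lambda_n|^2\le 2(n-|V_0|)-4$ with $|\lambda_n|^2\ge(\sqrt{2n-4}-\frac32-o(1))^2=2n-4-3\sqrt{2n-4}+O(1)$ gives $2|V_0|\le 3\sqrt{2n-4}+O(1)$ directly; squaring is what makes the $\frac32$ appear only once. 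This same second-order inequality is reused in Lemma~\ref{lem:V+}, which is an additional reason the paper sets it up this way.
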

\begin{proof}
    For any $v\in V_+$,
\begin{equation}\label{eq:zv}
  |\lambda_n|\cz_v = -\lambda_n \cz_v =-\sum_{u\in N(v)} \cz_u \leq  -\sum_{u\in N(v)\cap V_-} \cz_u =  \sum_{u\in N(v)\cap V_-} |\cz_u|.
\end{equation}
Similarly for any $u\in V_-$, we have
\begin{equation}\label{eq:zu}
     |\lambda_n| |\cz_u| \leq  \sum_{v\in N(u)\cap V_+} |\cz_v|.
\end{equation}
Summing over all $v\in V_+$ of Equation \eqref{eq:zv} and multiplying by $|\lambda_n|$, we get
\begin{align} \label{eq:sumzv}
    |\lambda_n|^2 \sum_{v\in V_+}|\cz_v| &\leq \sum_{v\in V_+}  \sum_{u\in N(v)\cap V_-} |\lambda_n| |\cz_u| \nonumber\\
    &\leq \sum_{v\in V_+}  \sum_{u\in N(v)\cap V_-} \sum_{y\in N(u)\cap V_+} |\cz_y| \nonumber\\
    &= \sum_{y\in V_+} |\cz_y| \cdot \sum_{u\in N(y)\cap V_-}|N(u)\cap V_+| \nonumber\\
    &= \sum_{y\in V_+} |\cz_y|  |E(N(y)\cap V_-, V_+)| \nonumber\\
    &\leq \sum_{y\in V_+} |\cz_y| (2 (|N(y)\cap V_-|+ |V_+|)-4).
\end{align}
In the last step, we use the fact that a bipartite planar graph on $m$ vertices can have at most $2m-4$ edges.
We use the trivial bound 
\begin{equation}\label{eq:yV1}
2 (|N(y)\cap V_-|+ |V_+|)-4 \leq 2 (|V_-|+ |V_+|) -4 = 2(n-|V_0|) -4.
\end{equation}
It then follows from \eqref{eq:sumzv} and \eqref{eq:yV1} that
\begin{equation}\label{eq:eqv0}
     |\lambda_n|^2 \vol(V_+) \leq   (2(n-|V_0|) -4)\vol(V_+).
\end{equation}
Applying the lower bound of $|\lambda_n|\geq \sqrt{2n-4}-\frac{3}{2}-O(\frac{1}{\sqrt{n}})$, and simplifying \eqref{eq:eqv0}, we then obtain that
  $|V_0| \leq \lp\frac32+o(1)\rp\sqrt{2n-4}$. This completes the proof of the lemma.
\end{proof}

Since $|V_-|+|V_+| = n -|V_0| = n - O(\sqrt{n})$, without loss of generality, we can assume $|V_-|> \frac{n}{2} - O(\sqrt{n})>20 \sqrt{2n-4}$. The following lemma bounds the volume of $V_{+}$.

\begin{lemma}\label{lem:V+}
Let $V_+'=\{v\in V_+\colon |N(v)\cap V_-|\geq |V_-| - 5\sqrt{2n-4}\}$ and $V_+''=V_+\setminus V_+'$. 
Then we have
\begin{equation}\label{eq:V+2prime}
    \vol(V''_+)  \leq\left( \frac{3}{7}+  O\left(\frac{1}{\sqrt{n}}\right)\right) \vol(V'_+),
\end{equation} and thus 
$\vol(V_+) \leq 
     \lp \frac{10}{7}+  O\left(\frac{1}{\sqrt{n}}\right)\rp \vol(V'_+)
     \label{eq:V+}$.
\end{lemma}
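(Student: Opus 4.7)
The plan is to repeat the two-step eigenvalue iteration from the proof of Lemma~\ref{lem:V0}, but this time to split the outer sum according to whether $y \in V_+'$ or $y \in V_+''$, and to exploit the definition of $V_+''$ to save an amount of order $\sqrt{n}$ in the planar edge bound. The second inequality $\vol(V_+) \le (\tfrac{10}{7} + O(1/\sqrt{n})) \vol(V_+')$ then follows immediately from the first by writing $\vol(V_+) = \vol(V_+') + \vol(V_+'')$, so the real content is \eqref{eq:V+2prime}.

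Concretely, the first step is to reproduce the chain of inequalities in Lemma~\ref{lem:V0} up to
$$|\lambda_n|^2 \vol(V_+) \le \sum_{y\in V_+} |\cz_y| \cdot |E(N(y)\cap V_-, V_+)|,$$
and apply the bipartite planar bound $|E(N(y)\cap V_-, V_+)| \le 2(|N(y)\cap V_-| + |V_+|) - 4$. For $y\in V_+'$ this is at most $2n-4$, while for $y\in V_+''$ the defining condition $|N(y)\cap V_-| < |V_-| - 5\sqrt{2n-4}$ tightens the bound to $2n-4-10\sqrt{2n-4}$. Substituting both bounds and regrouping gives
$$10\sqrt{2n-4}\cdot \vol(V_+'') \le \bigl(2n-4-|\lambda_n|^2\bigr)\bigl(\vol(V_+') + \vol(V_+'')\bigr).$$
The second step is to control the prefactor $2n-4-|\lambda_n|^2$ by squaring Lemma~\ref{lem:planarlambdan}: this yields $|\lambda_n|^2 \ge 2n-4 - 3\sqrt{2n-4} - O(1)$, hence $2n-4-|\lambda_n|^2 \le 3\sqrt{2n-4} + O(1)$. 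Dividing the displayed inequality by $\sqrt{2n-4}$ and rearranging then produces $\vol(V_+'') \le (\tfrac{3}{7} + O(1/\sqrt{n})) \vol(V_+')$.

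The main obstacle is that the target ratio arises from a tight tension between two competing $\sqrt{n}$-terms: the deficit $3\sqrt{2n-4}$ in the squared eigenvalue bound, inherited from Theorem~\ref{thm:taittobin}, versus the savings $10\sqrt{2n-4}$ dictated by the threshold $5\sqrt{2n-4}$ in the definition of $V_+''$. The ratio $\tfrac{3}{10-3} = \tfrac{3}{7}$ emerges only if both scaling constants are known to the right precision, so the whole argument is pinned to the sharpness of Lemma~\ref{lem:planarlambdan}. A minor sanity check, for free, is that $V_+'$ must be nonempty for large $n$: if it were empty, the displayed inequality would force $|\lambda_n|^2 \le 2n-4 - 10\sqrt{2n-4}$, contradicting Lemma~\ref{lem:planarlambdan}.
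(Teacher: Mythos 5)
Your proposal is correct and follows essentially the same route as the paper: the same split of the sum from Lemma~\ref{lem:V0} over $V_+'$ versus $V_+''$, the same planar edge bounds $2n-4$ and $2n-4-10\sqrt{2n-4}$, and the same use of the lower bound on $|\lambda_n|$ to turn the $3$-versus-$10$ tension into the ratio $\tfrac{3}{7}$. The only cosmetic difference is that the paper first derives $\vol(V_+'')\le(\tfrac{3}{10}+O(1/\sqrt{n}))\vol(V_+)$ and then converts, whereas you rearrange directly; this is the same computation.
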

\begin{proof}
By Inequality \eqref{eq:sumzv} of Lemma \ref{lem:V0}, we have
\begin{equation}\label{eq:sumzv2}
    |\lambda_n|^2 \sum_{v\in V_+}|\cz_v| \leq \sum_{y\in V_+} |\cz_y| (2 (|N(y)\cap V_-|+ |V_+|)-4).
\end{equation}  
For $y\in V_{+}'$, we use the trivial bound
\begin{equation}\label{eq:yV1B}
2 (|N(y)\cap V_-|+ |V_+|)-4 \leq 2 (|V_-|+ |V_+|) -4 = 2(n-|V_0|) -4.
\end{equation}
For $y\in V''_+$, we use a better bound
\begin{equation}\label{eq:yV2}
    2 (|N(y)\cap V_-|+ |V_+|)-4 \leq 2 ((|V_{-}|-5\sqrt{2n-4})+ |V_+|) -4 \leq 2n-4 -10\sqrt{2n-4}.
\end{equation}
Plugging Equations \eqref{eq:yV1B} and \eqref{eq:yV2} into Equation \eqref{eq:sumzv2}, we get
\begin{align}
    |\lambda_n|^2 \sum_{v\in V_+}|\cz_v| &\leq  \sum_{y\in V_+} |\cz_y| (2 (|N(y)\cap V_-|+ |V_+|)-4) \nonumber\\
    &\leq \sum_{y\in V'_+} |\cz_y| (2n-4) + \sum_{y\in V''_+} |\cz_y| (2n-4 -10\sqrt{2n-4}) \nonumber \\
    &= (2n-4)\sum_{y\in V_+}|\cz_y| - 10\sqrt{2n-4}\sum_{y\in V''_+} |\cz_y|.\label{eq:sumzV2}
\end{align}
Now we apply again the lower bound of $|\lambda_n|\geq \sqrt{2n-4}-\frac{3}{2}-O(\frac{1}{\sqrt{n}})$. Simplifying Equation \eqref{eq:sumzV2}, we get
\begin{equation}
   \left(3\sqrt{2n-4} +  O\left(1\right)\right)  \sum_{v\in V_+}|\cz_v| \geq  10\sqrt{2n-4} \sum_{v\in V''_+}|\cz_v|.
\end{equation}
Thus, we have
\begin{equation}
    \vol(V''_+) \leq\left(\frac{3}{10} +  O\left(\frac{1}{\sqrt{n}}\right)\right) \vol(V_+).
\end{equation}
Equivalently,
\begin{equation}\label{eq:V''}
    \vol(V''_+) \leq\left( \frac{3}{7}+  O\left(\frac{1}{\sqrt{n}}\right)\right) \vol(V'_+).
\end{equation}
As a corollary, we have
\begin{equation}
    \vol(V_+) =\vol(V'_+) + \vol(V''_+)\leq 
     \lp \frac{10}{7}+  O\left(\frac{1}{\sqrt{n}}\right)\rp \vol(V'_+).
\end{equation}
\end{proof}

Using Lemma \ref{lem:V+}, we deduce some important information on the structure of the extremal graph.

\begin{lemma}\label{lem:z-deg}
We have
    \begin{enumerate}[(i)]
        \item There exist $v_1, v_2 \in V_+$ with $\min\{d(v_1), d(v_2)\} \geq n-5\sqrt{2n-4}$.
        \item $w_0 \in \{v_1,v_2\}$. 
        \item For all $v \in V(G)\backslash \{v_1, v_2\}$, $d(v)\leq 10\sqrt{2n-4}+4$.
        \item For all $v \in V(G)\backslash \{v_1, v_2\}$, $|\cz_v|=O(\frac{1}{\sqrt{n}}).$
        \item Assume $w_0=v_1$. Then $\cz_{v_2}\geq 1-O(\frac{1}{\sqrt{n}})$.
    \end{enumerate}
\end{lemma}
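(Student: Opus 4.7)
The plan is to combine Lemma~\ref{lem:V+} (which concentrates the $V_+$-mass on $V_+'$) with the planarity constraint that $G$ contains no $K_{3,3}$ as a subgraph, and then to bootstrap via the eigenvector equation $A(G)\cz=\lambda_n\cz$. The structural parts (i) and (iii) follow from the planar $K_{3,3}$-free argument; part (ii) is then immediate from (iv); and both (iv) and (v) are quantitative consequences of the eigenvector equation once (i)--(iii) are available.

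For (i) and (iii), I first argue $|V_+'|\leq 2$: if three vertices $a,b,c\in V_+'$ existed, each with $|N(\cdot)\cap V_-|\geq |V_-|-5\sqrt{2n-4}$, then by inclusion--exclusion their common neighborhood in $V_-$ has size at least $|V_-|-15\sqrt{2n-4}\geq 5\sqrt{2n-4}\geq 3$ (using $|V_-|\geq 20\sqrt{2n-4}$), producing a $K_{3,3}$ subgraph of $G$, contradicting planarity. The same inclusion--exclusion gives (iii): if some $v\notin\{v_1,v_2\}$ had $d(v)\geq 10\sqrt{2n-4}+5$, then applied to $\{v_1,v_2,v\}$ in the universe $V(G)\setminus\{v_1,v_2,v\}$ one obtains $d(v_1)+d(v_2)+d(v)-2\cdot 2-2(n-3)\geq 3$ common neighbors, again producing $K_{3,3}$. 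To finish (i) I still need $|V_+'|=2$ and the degree bound $d(v_i)\geq n-5\sqrt{2n-4}$. For the former, assuming $|V_+'|\leq 1$ yields $\vol(V_+)\leq 10/7$ via Lemma~\ref{lem:V+}; coupling this with $|\lambda_n|\vol(V_+)\leq \sum_{u\in V_-}|\cz_u|\cdot|N(u)\cap V_+|$, the planar bipartite edge bound, and $|\lambda_n|\geq \sqrt{2n-4}-O(1)$ forces a contradiction. For the latter, once (iii) is known, the bound $|V_+|=O(\sqrt{n})$ follows from a planar edge count combined with Lemma~\ref{lem:V0}, so $|V_-|\geq n-O(\sqrt{n})$ and hence $d(v_i)\geq |V_-|-5\sqrt{2n-4}\geq n-5\sqrt{2n-4}$.

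For (iv), the eigenvector equation together with Cauchy--Schwarz gives, for $v\notin\{v_1,v_2\}$,
\[
|\lambda_n||\cz_v|\leq |\cz_{v_1}|+|\cz_{v_2}|+\sqrt{d(v)}\cdot\Bigl(\sum_{u\neq v_1,v_2}\cz_u^2\Bigr)^{1/2}\leq 2+O(n^{1/4})\cdot\Bigl(\sum_{u\neq v_1,v_2}\cz_u^2\Bigr)^{1/2},
\]
using $d(v)=O(\sqrt{n})$ from (iii). Squaring and summing over $v\neq v_1,v_2$, combined with the planar edge count $\sum_{v\neq v_1,v_2}d(v)=O(n)$, yields a self-consistent bound $\sum_{u\neq v_1,v_2}\cz_u^2=O(1)$, which then gives $|\cz_v|=O(1/\sqrt{n})$ per vertex. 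Part (ii) is now immediate: if $w_0\notin\{v_1,v_2\}$, then $|\cz_{w_0}|=1$ contradicts (iv). For (v), assuming WLOG $w_0=v_1$, the eigenvector equation at $v_2$ reads $-|\lambda_n|\cz_{v_2}=\sum_{u\sim v_2}\cz_u$; the equation at $v_1$ gives $-|\lambda_n|=\sum_{u\sim v_1}\cz_u$, and since the symmetric difference $N(v_1)\triangle N(v_2)$ has size $O(\sqrt{n})$ with each entry of $\cz$ there of size $O(1/\sqrt{n})$ by (iv), subtracting the two equations yields $|\lambda_n|(1-\cz_{v_2})=O(1)$, i.e.\ $\cz_{v_2}\geq 1-O(1/\sqrt{n})$.

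The main obstacle is forcing $|V_+'|\geq 2$ above: Lemma~\ref{lem:V+} only provides an upper bound on $\vol(V_+)$, so ruling out the degenerate case of a single high-$V_-$-degree vertex requires propagating the mass of $\vol(V_-)$ through the eigenvector equation back to $V_+$ using the planar bipartite edge bound. A secondary difficulty is the bootstrap in (iv), which naively gives only $|\cz_v|=O(1)$ and requires a careful $L^2$ argument exploiting the total planar edge count instead of the per-vertex degree bound alone.
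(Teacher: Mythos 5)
Your skeleton (the $K_{3,3}$-free argument for the structural claims, the eigen-equation for the quantitative ones) matches the paper's, but two of your key quantitative steps do not close as written. First, your argument for $|V_+'|=2$: the chain $|\lambda_n|\vol(V_+)\le\sum_{u\in V_-}|\cz_u|\,|N(u)\cap V_+|$, followed by $|\cz_u|\le\vol(V_+)/|\lambda_n|$ and the bipartite planar edge bound, only yields $|\lambda_n|^2\vol(V_+)\le(2n-4)\vol(V_+)$, i.e.\ $|\lambda_n|^2\le 2n-4$ --- which is consistent, not contradictory. The paper's contradiction comes from anchoring the double application of the eigen-equation at the specific vertex $w_0$ with $\cz_{w_0}=1$: it first shows $\vol(V_+'')<1$ and $\vol(V_+)<3$, which force $w_0\in V_+'$, and only then derives $|\lambda_n|^2=|\lambda_n|^2\cz_{w_0}\le n+(2n-4)\vol(V_+'')\le(2-\tfrac17)n+O(\sqrt n)$, beating $2n-4$. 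Your ordering --- deducing (ii) from (iv) at the very end --- discards exactly the normalization $\cz_{w_0}=1$ that makes the count work; you correctly flag this as ``the main obstacle,'' but the fix you sketch is the same volume computation that fails. Second, your $L^2$ bootstrap for (iv) does not converge: squaring $|\lambda_n||\cz_v|\le 2+\sqrt{d(v)}\bigl(\sum_u\cz_u^2\bigr)^{1/2}$ and summing over $v$ produces on the right a term of order $\max_{v\ne v_1,v_2} d(v)\cdot\sum_u d(u)\cz_u^2=\Theta(n)\cdot S$ against $\lambda_n^2 S=\Theta(n)\cdot S$ on the left, with no favorable constant, so $S=O(1)$ cannot be extracted. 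The paper avoids this entirely via the $L^1$ bound $\vol(V_+)\le \tfrac{20}{7}+o(1)$ from Lemma~\ref{lem:V+}: for $u\in V_-$ one gets $|\lambda_n||\cz_u|\le\vol(V_+)=O(1)$ in one line, and for $v\in V_+''$ two applications of the eigen-equation plus the degree bound (iii) give $|\lambda_n|^2\cz_v=O(\sqrt n)$.

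A further soft spot: the degree bound $d(v_i)\ge n-5\sqrt{2n-4}$ in (i) requires $|V_-|\ge n-O(\sqrt n)$, and your justification --- that ``$|V_+|=O(\sqrt n)$ follows from a planar edge count combined with Lemma~\ref{lem:V0}'' --- does not follow: the planar edge count bounds $e(G)$ and Lemma~\ref{lem:V0} bounds $|V_0|$, but neither controls the number of vertices on which $\cz$ is positive, since those entries may be arbitrarily small and hence are not limited by any volume bound. (The paper is itself terse on this point, but you cannot cite those two facts as implying it.) Your part (iii) given (i), and your part (v), are correct and essentially identical to the paper's.
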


\begin{proof}
Observe that $|V'_+|\leq 2$. Otherwise, any three vertices in  $|V'_+|$ have  common neighbors of size at least $|V_-| - 15\sqrt{2n-4}\geq 5\sqrt{2n-4} >3 $.
Thus $G$ contains a subgraph $K_{3,3}$, contradicting that $G$ is planar.
Thus, we have that for sufficiently large $n$,
$$\vol(V_{+}') \leq |V'_+|\leq 2.$$
Hence by Lemma \ref{lem:V+}, we have 
$$\vol(V_{+}'')  \leq\left( \frac{3}{7}+  O\left(\frac{1}{\sqrt{n}}\right)\right)\cdot 2 <1.$$
This implies that $w_0\not\in V''_+$ since $|\cz_{w_0}|=1$. 
 Moreover $w_0 \notin V_{-}$, as otherwise 
$$\sqrt{2n-4}-\frac{3}{2}-O\lp\frac{1}{\sqrt{n}} \rp\leq |\lambda_n| = \lambda_n \cz_{w_0} \leq \dss_{u\in V_{+} \cap N(w_0)} \cz_u  \leq \vol(V_{+}) <3,$$
giving a contradiction.
Thus $w_0 \in V'_+$. In particular, $z_{w_0}=1$.

Now we show $V'_+$ has exactly two vertices. If not, assume $w_0$ is the only vertex in $V'_+$. We have
\begin{align*}
    |\lambda_n|^2 &=  |\lambda_n|^2 \cz_{w_0} \\
    &\leq |\lambda_n|  \sum_{u\in N(w_0)\cap V_-} |\cz_u|\\
    &\leq \sum_{u\in N(w_0)\cap V_-} \sum_{y\in N(u)\cap V_+} \cz_y\\
    &\leq n + (2n-4) \sum_{y\in V''_+} \cz_y\\
    &\leq n + (2n-4) \left( \frac{3}{7}+  O\left(\frac{1}{\sqrt{n}}\right)\right) \\
    &\leq (2-\frac{1}{7})n + O(\sqrt{n}),
\end{align*}
contradicting the lower bound of $|\lambda_n|$. Hence $|V_+'|=2$. Let $V_+' = \{v_1, v_2\}$.
Notice that $d(v)\leq 10\sqrt{2n-4}+4$ for any $v\not= v_1, v_2$. Otherwise $v, v_1, v_2$ have a common neighborhood of size at least $3$, contradicting that $G$ is $K_{3,3}$-free.

Now we will show that for any $v \notin \{v_1, v_2\}$, $|\cz_v| = O\left(\frac{1}{\sqrt{n}}\right)$.
For any $v\in V''_+$, we have
\begin{align*}
    |\lambda_n|^2 \cz_v &\leq |\lambda_n|  \sum_{u\in N(v)\cap V_-} |\cz_u|\\
      &\leq \sum_{u\in N(v)\cap V_-} \sum_{y\in N(u)\cap V_+} \cz_y\\
      &= \sum_{y\in V_+}\cz_y\cdot |N(v)\cap N(y)\cap V_-| \\
      &\leq \left( 10\sqrt{2n-4}+4 \right) \sum_{y\in V_+}\cz_y \\
      &\leq \left( 10\sqrt{2n-4}+4 \right) 2\left(\frac{10}{7} +  O\left(\frac{1}{\sqrt{n}}\right) \right).
      \end{align*}
Thus, $\cz_v= O\left(\frac{1}{\sqrt{n}}\right)$.

For $u\in V_-$, applying Equation \eqref{eq:zu}, we have
\begin{align*}
    |\lambda_n| \cz_u &\leq \sum_{v\in N(v)\cap V_+} \cz_v\\
    &\leq  \sum_{v\in V_+} \cz_v\\
    &\leq 2 \lp \frac{10}{7} +  O\left(\frac{1}{\sqrt{n}}\right)\rp.
\end{align*}
Therefore, $\cz_u= O\left(\frac{1}{\sqrt{n}}\right)$.

Finally, we estimate $\cz_{v_2}$. 
From the eigen-equations, we get
\begin{align}
    |\lambda_n| (\cz_w -\cz_{v_2}) & = -\sum_{u\in N(w)\setminus N(v_2)} \cz_u + \sum_{u\in N(v_2)\setminus N(w)} \cz_u \\
    &\leq \sum_{u\in (N(w)\setminus N(v_2))\cap V_-} |\cz_u| +  \sum_{u\in (N(v_2)\setminus N(w))\cap V_+} \cz_u\\
    &\leq \sum_{u\in (N(w)\setminus N(v_2))\cap V_-} |\cz_u|  + \sum_{u\in V''_+} \cz_u\\
    &\leq 10\sqrt{2n-4} \cdot O\left(\frac{1}{\sqrt{n}}\right) +  \frac{6}{7} +  O\left(\frac{1}{\sqrt{n}}\right) \\
    &= O(1).
\end{align}
Therefore, we have
$\cz_{v_2}\geq 1 - O\left(\frac{1}{\sqrt{n}}\right).$
\end{proof}

For $i\in \{0,1,2\}$, let $V_i = \{v \in V(G)\backslash \{v_1, v_2\}: N(v) \cap \{v_1, v_2\}= i\}$. 
We have the following lemma on the structure of $G$.
\begin{lemma}\label{lem:stucture_G}
We have the following properties.
\begin{enumerate}[(i)]
     \item $|V_2|\geq n - 10\sqrt{2n-4}.$
    \item For any $v\in V_0\cup V_1\cup V_2$, $|N(v)\cap V_2| \leq 2$.
    \item In $H= G[V_0\cup V_1\cup V_2]$, for any vertex $v \in V(H)$,
    $|N_{H}(N_{H}(v))\cap V_2|\leq 4$.
\end{enumerate}
\end{lemma}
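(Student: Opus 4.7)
The plan is to establish (i), (ii), (iii) in order, with (i) an immediate degree count, (ii) a direct $K_{3,3}$-subgraph observation, and (iii) a more delicate $K_{3,3}$-minor argument. For (i), I apply Lemma~\ref{lem:z-deg}(iii): since $d(v_i) \geq n - 5\sqrt{2n-4}$ for $i = 1, 2$, each $v_i$ has at most $5\sqrt{2n-4}-1$ non-neighbors in $V(G) \setminus \{v_i\}$. By definition every vertex of $V_0 \cup V_1$ is a non-neighbor of at least one of $v_1, v_2$, hence $|V_0 \cup V_1| \leq 10\sqrt{2n-4}-2$, and so $|V_2| = (n-2) - |V_0 \cup V_1| \geq n - 10\sqrt{2n-4}$.

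For (ii), I argue by contradiction. Suppose some $v \in V(H)$ has three distinct neighbors $u_1, u_2, u_3 \in V_2$. By the definition of $V_2$, each $u_i$ is adjacent to both $v_1$ and $v_2$; by assumption each $u_i$ is also adjacent to $v$. Since $v \notin \{v_1, v_2\}$ (as $v \in V(H)$) and $v \neq u_i$, the vertex sets $\{v_1, v_2, v\}$ and $\{u_1, u_2, u_3\}$ realize $K_{3,3}$ as a subgraph of $G$, contradicting planarity.

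For (iii), I bound the distance-$1$ and distance-$2$ contributions in $V_2$ separately. The distance-$1$ count is at most $2$ by applying (ii) to $v$ itself. For the distance-$2$ count, suppose for contradiction that three distinct vertices $c_1, c_2, c_3 \in V_2 \setminus (N_H(v) \cup \{v\})$ each have a length-$2$ path to $v$ in $H$; for each $i$, pick a connector $w_i \in N_H(v)$ with $w_i c_i \in E(H)$. Then the six branch sets $\{v_1\}, \{v_2\}, T := \{v\} \cup \{w_1, w_2, w_3\}$ on one side and $\{c_1\}, \{c_2\}, \{c_3\}$ on the other form a $K_{3,3}$-minor of $G$: the set $T$ is connected as a star at $v$; the branch sets are pairwise disjoint because every vertex of $T$ lies in $N_H(v) \cup \{v\}$ while each $c_j$ lies outside this set, and $v_1, v_2 \notin V(H)$; and the required cross-edges $v_\ell c_j$ (since $c_j \in V_2$) together with $w_j c_j \in E(H)$ supply all nine edges. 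This contradicts $G$ being planar, so at most two such distance-$2$ vertices exist, and combining with the distance-$1$ bound yields $|N_H(N_H(v)) \cap V_2| \leq 4$.

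The principal technical point lies in (iii), namely the $K_{3,3}$-minor construction; the delicate issue is ensuring pairwise disjointness of the branch sets, which is arranged by restricting the candidate $c_j$'s to lie outside $N_H(v) \cup \{v\}$, so that they cannot coincide with $v$ or with any of the connectors $w_i$. Once the branch sets are disjoint, connectedness of $T$ and the requisite cross-adjacencies are essentially automatic from the definitions of $V_2$ and of the $w_i$'s.
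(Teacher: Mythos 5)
Parts (i) and (ii) of your argument are correct and essentially identical to the paper's. For part (iii), your $K_{3,3}$-minor is in fact cleaner than the paper's: you contract the star $\{v\}\cup\{w_1,w_2,w_3\}$ into a single branch set and pair it with $\{v_1\},\{v_2\}$ against $\{c_1\},\{c_2\},\{c_3\}$, which, as you observe, does not require the connectors $w_i$ to be distinct. The paper instead uses (ii) and pigeonhole to extract three pairwise vertex-disjoint two-vertex paths $u_iw_i$ and takes $\{v\},\{v_1\},\{v_2\}$ versus the contracted paths $\{u_i,w_i\}$, which needs more care about disjointness. Your disjointness bookkeeping for the branch sets is handled correctly.

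There is, however, a counting gap at the end of (iii). The set $N_H(N_H(v))$ contains $v$ itself whenever $N_H(v)\neq\emptyset$, since $v\in N_H(u)$ for every $u\in N_H(v)$. If $v\in V_2$, this gives an element of $N_H(N_H(v))\cap V_2$ that falls into neither of your two buckets: it is not at distance $1$ from $v$, and you explicitly exclude it from the distance-$2$ candidates by restricting to $V_2\setminus(N_H(v)\cup\{v\})$. So what your argument actually establishes is $|N_H(N_H(v))\cap V_2|\le 2+2+1=5$ rather than $\le 4$. To recover the stated constant you would need an additional case analysis when $v\in V_2$ (for instance, showing that two vertices of $N_H(v)\cap V_2$ lying in $N_H(N_H(v))$ together with two distance-$2$ vertices already force a $K_{3,3}$ minor), or else restate the bound as $5$ --- which would cost nothing downstream, since Lemma \ref{lem:x} only uses that this quantity is $O(1)$. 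As written, though, the claimed bound of $4$ is not fully established.
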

\begin{proof}
By Lemma \ref{lem:z-deg}, $\min\{d(v_1), d(v_2)\}\geq n-5\sqrt{2n-4}$. It follows that $|V_2|\geq n-10\sqrt{2n-4}$. For any $v\in V_0 \cup V_1\cup V_2$, $v$ has at most two neighbors in $V_2$, otherwise, $v, v_1, v_2$ and three of their common neighbors would form a $K_{3,3}$ in $G$.

Now for any $v \in V(G[V_0\cup V_1\cup V_2])$, we claim that $|N_{H}(N_{H}(v))\cap V_2|\leq 4$. Indeed, suppose not, then by (ii) and the Pigeonhole principle, there exist three vertex-disjoint $2$-vertex paths $u_1 w_1, u_2 w_2, u_3 w_3$ in $H$ such that $v$ is adjacent to $u_1, u_2, u_3$, and $w_1, w_2, w_3 \in N(N(v)) \cap V_2$. We then have a $K_{3,3}$ minor in $G$, contradicting that $G$ is planar.
\end{proof}

Using Lemma \ref{lem:stucture_G}, we can obtain bounds on the entries of $\cx$. 
\begin{lemma}\label{lem:x}
Let $u_0$ be the vertex such that $\cx_{u_0} = 1$.
\begin{enumerate}[(i)]
        \item $u_0 \in \{v_1, v_2\}$.
        \item $\min\{\cx_{v_1}, \cx_{v_2}\} \geq 1 - O(\frac{1}{\sqrt{n}})$.
        \item For any other vertex $v \notin \{v_1, v_2\}$, $\cx_v  = O(\frac{1}{\sqrt{n}})$.
    \end{enumerate}
\end{lemma}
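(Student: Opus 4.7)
My plan is to establish (iii) first, derive (i) as an immediate corollary, and finally obtain (ii) by a Rayleigh-type calculation at the centers $v_1,v_2$. Throughout, I will use heavily the structural results already in hand: Lemma \ref{lem:planarlambdan} gives $\lambda_1=\sqrt{2n-4}+O(1)$; Lemma \ref{lem:z-deg}(iii) gives the degree bound $d(v)\le 10\sqrt{2n-4}+4$ for $v\notin\{v_1,v_2\}$; Lemma \ref{lem:stucture_G} gives $|V_2|\ge n-10\sqrt{2n-4}$, $|N(v)\cap V_2|\le 2$, and $|N_H(N_H(v))\cap V_2|\le 4$. I will also use the planarity-based fact $\|\cx\|_1\le 2|E(G)|/\lambda_1\le (6n-12)/\lambda_1=O(\sqrt{n})$, obtained by summing the eigen-equation $\lambda_1\cx_v=\sum_{u\sim v}\cx_u$ over all $v$.

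For part (iii), let $s=\max_{v\neq v_1,v_2}\cx_v$, attained at some vertex $v^\ast$. The plan is to apply the squared eigen-equation
\[
\lambda_1^2\cx_{v^\ast}=\sum_w |N(v^\ast)\cap N(w)|\,\cx_w
\]
and bound the right-hand side piecewise. The contribution from $w\in\{v_1,v_2\}$ is at most $2d(v^\ast)=O(\sqrt n)$. For $w\in V_2$, every such $w$ shares exactly $|N(v^\ast)\cap\{v_1,v_2\}|\le 2$ common neighbors with $v^\ast$ coming from $\{v_1,v_2\}$, giving total $\le 2\sum_{w\in V_2}\cx_w\le 2\lambda_1\cx_{v_1}=O(\sqrt n)$; by Lemma \ref{lem:stucture_G}(iii) at most four $w\in V_2$ contribute additional common neighbors in $H$, each contributing at most $d(v^\ast)=O(\sqrt n)$, for a further $O(\sqrt n)$. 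The hardest piece is $w\in V_0\cup V_1$: here I plan to use the $K_{3,3}$-free argument, which forces $|N(v^\ast)\cap N(w)\cap V_2|\le 2$, together with the bound $|V_0\cup V_1|\le 10\sqrt{2n-4}$ and $\|\cx\|_1=O(\sqrt n)$, yielding a bound of $O(\sqrt n)$ for this piece as well. Combining gives $\lambda_1^2\cx_{v^\ast}\le O(\sqrt n)$, and since $\lambda_1^2\ge 2n-O(\sqrt n)$, we obtain $s=O(1/\sqrt n)$.

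For part (i), because $\cx_{u_0}=1$ while (iii) forces $\cx_v=O(1/\sqrt n)\ll 1$ for every $v\notin\{v_1,v_2\}$ once $n$ is sufficiently large, the argmax $u_0$ must lie in $\{v_1,v_2\}$. For part (ii), WLOG $u_0=v_1$, so $\cx_{v_1}=1$. The eigen-equation at each $u\in V_2$ gives $\lambda_1\cx_u\ge\cx_{v_1}+\cx_{v_2}=1+\cx_{v_2}$, and since $V_2\subseteq N(v_2)$ we have $\lambda_1\cx_{v_2}\ge\sum_{u\in V_2}\cx_u\ge |V_2|(1+\cx_{v_2})/\lambda_1$. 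Rearranging yields $\cx_{v_2}\ge |V_2|/(\lambda_1^2-|V_2|)$, and plugging in $\lambda_1^2\le 2n+O(\sqrt n)$ and $|V_2|\ge n-10\sqrt{2n-4}$ gives $\cx_{v_2}\ge 1-O(1/\sqrt n)$ as required.

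The main obstacle is the $V_0\cup V_1$ piece of step (iii): the coarse bound $|V_0\cup V_1|\cdot d(v^\ast)=O(n)$ is too loose by a factor of $\sqrt n$, and closing this gap seems to require either a bootstrap (first show $s=O(1)$, then improve using the squared equation) or a refined planarity/common-neighbor count that extracts an extra $\sqrt n$ saving. If the direct bound does not close, an alternative is to first prove (ii) conditionally (assuming the two centers carry most of the mass) and then deduce (iii) by subtracting off these contributions from the identity $\lambda_1\cx_v=\sum_{u\sim v}\cx_u$.
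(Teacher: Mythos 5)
Your overall strategy for (iii) --- squaring the eigen-equation at a vertex $v\notin\{v_1,v_2\}$ and bounding the contributions from $\{v_1,v_2\}$, $V_2$, and $V_0\cup V_1$ separately --- is the same as the paper's, and your treatment of the first two pieces is sound. But the proof is not complete: as you yourself flag, the $V_0\cup V_1$ piece is left open, and that is precisely the step carrying the content of the lemma. The missing ingredient is a planarity edge count, not a bootstrap. Write the contribution of $V_0\cup V_1$ as $\sum_{s\in N(v)\cap(V_0\cup V_1)}\sum_{t\in N(s)}\cx_t$, i.e., expand the eigen-equation once more at each such $s$. The terms with $t\in\{v_1,v_2\}$ or $t\in V_2$ contribute $O(|V_0\cup V_1|)=O(\sqrt{n})$ using $\cx_t\le 1$, $|N(s)\cap V_2|\le 2$, and Lemma \ref{lem:stucture_G}. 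The remaining double sum $\sum_{s\in N(v)\cap(V_0\cup V_1)}\sum_{t\in N(s)\cap(V_0\cup V_1)}\cx_t$ is at most $2|E(G[V_0\cup V_1])|\le 2\bigl(3|V_0\cup V_1|-6\bigr)=O(\sqrt{n})$, since each ordered adjacent pair in $V_0\cup V_1$ is counted at most once, each $\cx_t\le 1$, and $|V_0\cup V_1|\le 10\sqrt{2n-4}$ by Lemma \ref{lem:stucture_G}(i). In your common-neighbor formulation this is the identity $\sum_{w\in V_0\cup V_1}|N(v^\ast)\cap N(w)\cap(V_0\cup V_1)|=\sum_{t\in N(v^\ast)\cap(V_0\cup V_1)}|N(t)\cap(V_0\cup V_1)|\le 2|E(G[V_0\cup V_1])|$, which supplies exactly the extra $\sqrt{n}$ saving you were missing; no maximality of $v^\ast$ and no bootstrapping are needed, as the trivial bound $\cx_t\le 1$ suffices everywhere.

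Your derivations of (i) and (ii) are correct. For (ii) you take a genuinely different route from the paper: you pump the eigen-equation through $V_2$ to obtain $\cx_{v_2}\ge |V_2|/(\lambda_1^2-|V_2|)\ge 1-O(1/\sqrt{n})$, using the upper bound on $\lambda_1$ from Lemma \ref{lem:planarlambdan} and $|V_2|\ge n-10\sqrt{2n-4}$, whereas the paper bounds $|\cx_{v_1}-\cx_{v_2}|\le \frac{1}{\lambda_1-1}\sum_{v\in V_1}\cx_v=O(1/\sqrt{n})$ using part (iii) and $|V_1|=O(\sqrt{n})$. Your version has the mild advantage of not depending on (iii); both are valid.
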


\begin{proof}
Let prove (iii) first.
For any vertex $v \notin \{v_1, v_2\}$, we have
\begin{align*}
    \lambda_1^2 \cx_v &= \lambda_1 \sum_{s\in N(v)}\cx_s \\
    &= \lambda_1 \left(\sum_{s\in N(v)\cap V_2}\cx_s + \sum_{s\in N(v)\cap \{v_1, v_2\}} \cx_s + \sum_{s\in N(v)\cap (V_0\cup V_1)} \cx_s     \right)\\ 
    &\leq 4\lambda_1 +  \sum_{s\in N(v)\cap (V_0\cup V_1)} \lambda_1 \cx_s\\
      &=  4\lambda_1 + \sum_{s\in N(v)\cap (V_0\cup V_1)} \sum_{t\in N(s)}\cx_t\\
       &=  4\lambda_1 + \sum_{s\in N(v)\cap (V_0\cup V_1)} \left(\sum_{t\in N(s)\cap \{v_1,v_2\}}\cx_t +
       \sum_{t\in N(s)\cap V_2}\cx_t +\sum_{t\in N(s)\cap (V_0\cup V_1)}\cx_t 
       \right)\\
    &\leq   4\lambda_1 +  4 |N(v)\cap V_0\cup V_1)| + 4 + \sum_{s\in N(v)\cap (V_0\cup V_1)} \sum_{t\in N(s)\cap (V_0\cup V_1)}\cx_t\\
    &\leq  4\lambda_1 +  4|V_0\cup V_1| + 4 +   2 |E(G[V_0\cup V_1])|\\
     &\leq  4\lambda_1 + 4 |V_0\cup V_1| + 4 +   2( 3|V_0\cup V_1|-6)\\
     &\leq  4\lambda_1 + 10 |V_0\cup V_1|\\
    &= O(\sqrt{n}). 
\end{align*}
We conclude that
$$\cx_v = O\left( \frac{1}{\sqrt{n}}\right).$$
Thus, $u$ must be one of $v_1$ or $v_2$.

If $v_1v_2$ is not an edge of $G$, then we have
\begin{align*}
    \lambda_1|\cx_{v_1}-\cx_{v_2}| &\leq \sum_{v\in V_1}\cx_v \\
    &\leq |V_1|\cdot O\left( \frac{1}{\sqrt{n}}\right)\\
    &= O(1).
\end{align*}
If $v_1v_2$ is an edge of $G$, we have
\begin{align*}
    (\lambda_1-1)|\cx_{v_1}-\cx_{v_2}| &\leq \sum_{v\in V_1}\cx_v \\
    &\leq |V_1|\cdot O\left( \frac{1}{\sqrt{n}}\right)\\
    &= O(1).
\end{align*}
In both cases, we have
$$ |\cx_{v_1}-\cx_{v_2}| =   O\left( \frac{1}{\sqrt{n}}\right).$$  
It follows that $\min\{\cx_{v_1}, \cx_{v_2}\} \geq 1 - O(\frac{1}{\sqrt{n}})$.
\end{proof}

In the next lemma, we show that the extremal planar graph attaining the maximum spread must contain $K_{2,n-2}$ as a subgraph.

\begin{lemma}\label{lem:degn-2}
    Let $G$ be a graph obtaining the maximum spread among all $n$-vertex planar graphs. Then there exist two vertices $v_1, v_2$ in $G$ such that each of $v_1, v_2$ is adjacent to all vertices in $V\backslash \{v_1, v_2\}$. 
\end{lemma}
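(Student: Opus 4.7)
The plan is to argue by contradiction: suppose $V_0 \cup V_1 \neq \emptyset$, and pick a vertex $v \in V_0 \cup V_1$. I will construct a planar graph $G'$ on the vertex set $V(G)$ with $S(G') > S(G)$, contradicting maximality. Define $G'$ from $G$ by (i) deleting every edge of $G$ incident to $v$, (ii) adding the two edges $v v_1$ and $v v_2$, and (iii) if necessary to preserve planarity, deleting one chord $a b$ between two cyclically adjacent vertices of $V_2$.

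For planarity of $G'$: since $G - v$ contains $K_{2,V_2}$ as a subgraph with $|V_2| \ge n - 10\sqrt{2n-4}$, any planar embedding of $G-v$ induces a cyclic order on $V_2$, and the $|V_2|$ quadrilateral faces $v_1 a_i v_2 a_{i+1}$ of the embedded $K_{2,V_2}$ each have both $v_1$ and $v_2$ on their boundary. By Lemma~\ref{lem:stucture_G}, $|V_0 \cup V_1| = O(\sqrt{n})$ and every vertex has at most two neighbors in $V_2$, so at most $O(\sqrt{n})$ of these quadrilaterals are obstructed (either by containing a vertex of $(V_0 \cup V_1) \setminus \{v\}$ in their interior, or by being split by a chord among $V_2$). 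For $n$ large, at least one unobstructed quadrilateral remains, so $v$ can be embedded there with arcs to $v_1, v_2$; the chord deletion in (iii) handles the corner case in which every quadrilateral is chord-split.

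To compare spreads, I use Rayleigh quotients with trial vectors $\tilde{\cx}, \tilde{\cz}$ built from the eigenvectors $\cx, \cz$ of $A(G)$: set $\tilde{\cx}_u = \cx_u$ and $\tilde{\cz}_u = \cz_u$ for all $u \ne v$, and take $\tilde{\cx}_v = (\cx_{v_1} + \cx_{v_2})/\lambda_1(G) > 0$ and $\tilde{\cz}_v = (\cz_{v_1} + \cz_{v_2})/\lambda_n(G) < 0$, the entries that $v$ would have were it a pure $V_2$-vertex with neighbors exactly $\{v_1, v_2\}$. A short calculation using the eigen-equations $\sum_{u \in N_G(v)} \alpha_u = \lambda \alpha_v$ for $\alpha \in \{\cx, \cz\}$ gives
\[
\tilde{\cx}^T A(G') \tilde{\cx} = \lambda_1 \bigl(\|\cx\|^2 - 2\cx_v^2 + 2\tilde{\cx}_v^2\bigr) - 2 \cx_a \cx_b, \qquad \|\tilde{\cx}\|^2 = \|\cx\|^2 - \cx_v^2 + \tilde{\cx}_v^2,
\]
and the analogous identity for $\cz$. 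Therefore, using that $\|\cx\|^2, \|\cz\|^2 = \Theta(1)$,
\[
\lambda_1(G') \ge \lambda_1 \cdot \frac{\|\cx\|^2 + 2(\tilde{\cx}_v^2 - \cx_v^2)}{\|\cx\|^2 + (\tilde{\cx}_v^2 - \cx_v^2)} - O(1/n), \quad \lambda_n(G') \le \lambda_n \cdot \frac{\|\cz\|^2 + 2(\tilde{\cz}_v^2 - \cz_v^2)}{\|\cz\|^2 + (\tilde{\cz}_v^2 - \cz_v^2)} + O(1/n).
\]
If $\tilde{\cx}_v^2 > \cx_v^2$ and $\tilde{\cz}_v^2 > \cz_v^2$, both fractions exceed $1$; since $\lambda_1 > 0$ and $\lambda_n < 0$, one obtains $\lambda_1(G') > \lambda_1 + \Omega(1/\sqrt{n})$ and $\lambda_n(G') < \lambda_n - \Omega(1/\sqrt{n})$, hence $S(G') > S(G)$, the desired contradiction.

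The principal obstacle is establishing the strict inequalities $\cx_v < \tilde{\cx}_v$ and $|\cz_v| < |\tilde{\cz}_v|$. By Lemmas~\ref{lem:x} and \ref{lem:z-deg}, $\cx_v, |\cz_v| = O(1/\sqrt{n})$, while the bounds $\cx_{v_1} + \cx_{v_2}, \cz_{v_1} + \cz_{v_2} \ge 2 - O(1/\sqrt{n})$ and $|\lambda_1|, |\lambda_n| \sim \sqrt{2n}$ from Lemma~\ref{lem:planarlambdan} give $\tilde{\cx}_v, |\tilde{\cz}_v| \sim \sqrt{2/n}$. The implicit constants in the $O(1/\sqrt{n})$ bounds on $\cx_v, |\cz_v|$ must therefore be tightened below $\sqrt{2}$ by a careful iteration of the eigen-equations, using the degree bound $d(v) \le 10\sqrt{2n-4} + 4$ (Lemma~\ref{lem:z-deg}) together with the structural restriction from Lemma~\ref{lem:stucture_G} that each $u \in N_G(v)$ has at most two neighbors in $V_2$. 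Once this refinement is in place, the trial-vector Rayleigh estimates outlined above deliver the desired contradiction.
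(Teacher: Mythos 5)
Your overall strategy is the paper's: assume $V_0\cup V_1\neq\emptyset$, rewire a vertex $v\in V_0\cup V_1$ so that its only neighbours become $v_1,v_2$, and show via Rayleigh quotients that the spread strictly increases. (Your planarity discussion is in fact more careful than the paper's one-line ``observe $G'$ is still planar,'' and the extra chord deletion costs only $O(1/n)$, which is harmless.) The problem is the step you yourself flag and then defer. Your test vectors give $v$ the new values $\tilde{\cx}_v=(\cx_{v_1}+\cx_{v_2})/\lambda_1\sim\sqrt{2/n}$ and $\tilde{\cz}_v=(\cz_{v_1}+\cz_{v_2})/\lambda_n$, so your argument produces a gain only if $\cx_v^2<\tilde{\cx}_v^2$ and $\cz_v^2<\tilde{\cz}_v^2$ with a constant-factor margin. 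The cited lemmas give only $\cx_v,|\cz_v|=O(1/\sqrt{n})$, and the constants they actually deliver are large: e.g.\ the proof of Lemma~\ref{lem:x} gives $\lambda_1^2\cx_v\le 4\lambda_1+10|V_0\cup V_1|$ with $|V_0\cup V_1|$ as large as $10\sqrt{2n-4}$, i.e.\ a constant on the order of $10^2$, nowhere near the required bound below $\sqrt2$. Worse, because you take an \emph{arbitrary} $v\in V_0\cup V_1$, its degree inside $V_0\cup V_1$ can itself be $\Theta(\sqrt{n})$, and then one iteration of the eigen-equation at $v$ gives only $\lambda_1\cx_v\le 1+\Theta(\sqrt{n})\cdot O(1/\sqrt n)=O(1)$, which need not be below $\lambda_1\tilde{\cx}_v=2-o(1)$. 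So the ``careful iteration'' you appeal to is not a formality; as written the proof is incomplete, and the inequality you need is not even clearly true for every choice of $v$.

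The gap is closable, but it needs an ingredient you omit and the paper includes: choose $v$ to be a vertex of $V_0\cup V_1$ with at most $5$ neighbours inside $V_0\cup V_1$ (possible since $G[V_0\cup V_1]$ is planar, hence $5$-degenerate). Combined with $|N(v)\cap V_2|\le 2$, the vertex $v$ then has at most $7$ neighbours outside $\{v_1,v_2\}$, each with eigenvector entry $O(1/\sqrt n)$, so $|\lambda\alpha_v|\le 1+o(1)<2-o(1)=|\lambda\tilde\alpha_v|$ for $\alpha\in\{\cx,\cz\}$, which is exactly the separation your Rayleigh computation requires. Alternatively, the paper sidesteps the entire issue by a different test vector: it keeps $|\alpha_v|$ unchanged (using $\cx$ itself for $\lambda_1$, and $\tilde{\cz}$ with $\tilde{\cz}_v=-|\cz_v|$ for $\lambda_n$), so the net change in the quadratic form is, up to sign, $2|\alpha_v|\bigl((\alpha_{v_1}+\alpha_{v_2})-\sum_{y}|\alpha_y|\bigr)$ with the sum over the at most $7$ deleted neighbours outside $\{v_1,v_2\}$; the hub contribution $2-o(1)$ then dominates with no constant-chasing at all. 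Either repair works, but one of them must be made explicit.
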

\begin{proof}
    Let $\cx$ and $\cz$ be the eigenvectors associated with $\lambda_1$ and $\lambda_n$ respectively. Assume that $\cx$ and $\cz$ are both normalized such that the largest entries of them in absolute value are $1$.
    By Lemma \ref{lem:z-deg}, there exist two vertices $v_1, v_2 \in V_+$ such that $\min\{d_{v_1}, d_{v_2}\} \geq n - 5\sqrt{2n-4}$. Recall that for $i\in \{0,1,2\}$, $V_i = \{v \in V(G)\backslash \{v_1, v_2\}: N(v) \cap \{v_1, v_2\}= i\}$. 
    
    It suffices to show that $V_0 \cup V_1$ is empty. Suppose otherwise that $V_0 \cup V_1$ is not empty. Since $V_0 \cup V_1$ induces a planar graph, there exists some vertex $v \in V_0 \cup V_1$ such that $|N(v)\cap (V_0 \cup V_1)| \leq 5$. Moreover, observe that $v$ has at most two neighbors in $V_2$, as otherwise $v, v_1, v_2$ and three of their common neighbors would form a $K_{3,3}$ in $G$.
    Let $G'$ be obtained from $G$ by removing all the edges of $G$ incident with $v$ and adding the edges $vv_1, vv_2$, so that $E(G') = E(G-v) \cup \{vv_1, vv_2\}$.  Observe $G'$ is still planar.

    We claim that $\lambda_n(G') < \lambda_n(G)$. Indeed, consider the vector $\tilde{\cz}$ such that $\tilde{\cz}_{u} = \cz_u$ for $u\neq v$ and $\tilde{\cz}_v = -|\cz_v|$. Then
    \begin{align*}
        \tilde{\cz}' A(G') \tilde{\cz} &\leq \cz' A(G) \cz + 2 \dss_{y\sim v} |\cz_y \cz_v| - 2|\cz_v| (\cz_{v_1} + \cz_{v_2})\\
                      & \leq \cz' A(G)\cz + 2 \cdot (2 + 5) \cdot O\left(\frac{1}{\sqrt{n}}\right)\cdot |\cz_v| -  \lp 1- O\left(\frac{1}{\sqrt{n}}\right)\rp |\cz_v|\\
                      & < \cz' A(G) \cz.
    \end{align*}   

   Similarly, we claim that $\lambda_1(G') > \lambda_1(G)$. Indeed,
     \begin{align*}
       \cx'\cx \lambda_n(G') &= \cx' A(G') \cx \\
                      &= \cx' A(G) \cx - 2 \dss_{y\sim v} \cx_y \cx_v + 2\cx_v (\cx_{v_1} + \cx_{v_2})\\
                      & \geq \cx' \cx \lambda_n(G) - 2 \cdot (2 + 5) \cdot O\left(\frac{1}{\sqrt{n}}\right)\cdot \cx_v +  \lp 1- O\left(\frac{1}{\sqrt{n}}\right)\rp \cx_v\\
                      & > \cx' \cx \lambda_n(G)
    \end{align*}
Hence we have $S(G') =\lambda_1(G') -\lambda_n(G') > \lambda_1(G) -\lambda_n(G) = S(G)$, giving a contradiction.
     
\end{proof}

\begin{corollary}\label{cor:planarstruct}
For sufficiently large $n$, the planar graph on $n$ vertices attaining the maximum spread must belong to one of the three families below.
\begin{enumerate}[(i)]
\item $G$ is a double wheel $(K_1 \cup K_1)\vee C_{n-2}$.
\item $G$ is $(K_1 \cup K_1)\vee T_{n-2}$, where $T_{n-2}$ is a linear forest on $n-2$ vertices.
\item $G$ is $K_2\vee T_{n-2}$, where $T_{n-2}$ is a linear forest on $n-2$ vertices.
\end{enumerate}
\end{corollary}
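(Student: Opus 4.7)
The plan is to combine Lemma \ref{lem:degn-2} (which provides two universal vertices $v_1, v_2$) with elementary planarity arguments to classify the remaining structure. Let $U = V(G) \setminus \{v_1, v_2\}$ and $H = G[U]$, so by Lemma \ref{lem:degn-2} the graph $G$ consists of $K_{2,n-2}$ with parts $\{v_1,v_2\}$ and $U$, the edge $v_1 v_2$ if present, and the edges of $H$; the task reduces to determining the possible shapes of $H$ and whether $v_1 v_2 \in E(G)$.

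First, I would show $\Delta(H) \leq 2$: if some $u \in U$ had three $H$-neighbors $u_1, u_2, u_3$, then the sets $\{v_1, v_2, u\}$ and $\{u_1, u_2, u_3\}$ would form a $K_{3,3}$ subgraph of $G$ (using that $v_1, v_2$ are adjacent to all of $U$), contradicting planarity. Hence $H$ is a disjoint union of paths and cycles. Next, I would show that any cycle $C$ in $H$ must span all of $U$. Suppose otherwise and pick $u^* \in U \setminus V(C)$. In a plane embedding of $G$, the cycle $C$ is a Jordan curve. Since $v_1$ is adjacent to every vertex of $C$, the edges from $v_1$ to $V(C)$ partition the side of $C$ containing $v_1$ into $|C|$ faces, each bounded by $v_1$ together with two consecutive vertices of $C$; if $v_2$ lay on the same side it would fall in one such face and could reach at most two vertices of $C$, contradicting $|C| \geq 3$. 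Thus $v_1, v_2$ lie on opposite sides of $C$, but $u^*$ lies on some single side and is adjacent to both, forcing one of the edges $u^* v_1, u^* v_2$ to cross $C$, a contradiction. Therefore $H$ is either $C_{n-2}$ or a linear forest on $n-2$ vertices.

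To finish, I would split on whether $v_1 v_2 \in E(G)$. If $v_1 v_2 \notin E(G)$, then $G = (K_1 \cup K_1) \vee H$, yielding case (i) when $H = C_{n-2}$ and case (ii) when $H$ is a linear forest. If $v_1 v_2 \in E(G)$, then $G = K_2 \vee H$; when $H = C_{n-2}$ the graph $G$ has $1 + (n-2) + 2(n-2) = 3n-5$ edges, exceeding the planar bound $3n-6$, so this sub-case is impossible, and only case (iii) survives. The only delicate step is the Jordan-curve argument ruling out non-spanning cycles in $H$; all other steps are a direct $K_{3,3}$-subgraph check or a one-line edge count.
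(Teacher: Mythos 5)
Your proposal is correct and reaches the same conclusion, but it handles the key step --- ruling out a cycle in $H=G[U]$ that does not span $U$, and ruling out a spanning cycle when $v_1v_2$ is an edge --- by a different mechanism than the paper. The paper's proof is minor-theoretic throughout: after the identical $K_{3,3}$-subgraph argument giving $\Delta(H)\le 2$, it observes that $\{v_1,v_2\}$ together with any cycle $C$ yields a $K_2\vee C$ minor (either because $v_1v_2\in E(G)$ already, or by contracting an edge from $v_1$ to a vertex of $U\setminus V(C)$ when $C$ is non-spanning), and $K_2\vee C$ contains a $K_5$ minor. Your route instead proves the spanning-cycle dichotomy topologically --- the wheel $v_1+C$ forces $v_1$ and $v_2$ onto opposite sides of the Jordan curve $C$, so any $u^*\in U\setminus V(C)$ adjacent to both would force an edge crossing --- and then disposes of $K_2\vee C_{n-2}$ by the edge count $3n-5>3n-6$. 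Both arguments are sound; the paper's is shorter and stays entirely within the minor framework already invoked for planarity, while yours avoids minors (beyond the $K_{3,3}$ subgraph check) at the cost of a more delicate plane-embedding argument. One small point worth making explicit in your write-up: the Jordan-curve step does not use whether $v_1v_2\in E(G)$, which is what lets you treat the spanning-cycle case uniformly before splitting on that edge.
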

\begin{proof}
By Lemma~\ref{lem:degn-2}, for sufficiently large $n$, the maximum-spread planar graph $G$ on $n$ vertices has two vertices $v_1$ and $v_2$ which are adjacent to every vertex in $A = V(G)\setminus \{v_1, v_2\}$. If there is a vertex $u \in A$ with $|N(u) \cap A| \ge 3$, then $G$ has $K_{3, 3}$ as a subgraph. It follows that the $G[A]$ has maximum degree at most $2$ and hence is a disjoint union of paths and cycles. If $v_1 v_2 \in E(G)$, and $G[A]$ contains a cycle $C$, then $G[\{v_1, v_2\} \cup C]$ contains a $K_5$ minor, contradicting that $G$ is planar. Hence, $G[A]$ must be a linear forest on $n-2$ vertices, proving (iii). Now suppose $v_1 v_2 \notin E(G)$. If $G[A]$ is a cycle, then $G$ is the double wheel $(K_1 \cup K_1) \vee C_{n-2}$, proving (i).
Otherwise, if $G[A]$ contains a cycle $C$ which does not span all the vertices of $A$, then by contracting edges and deleting vertices in the subgraph induced by $A\setminus C$, we obtain that $G$ contains $K_2 \vee C$ as a minor, and therefore has $K_5$ as a minor, giving a contradiction. Hence, if $G[A]$ is not $C_{n-2}$, it must be a linear forest, proving (ii).
\end{proof}

We call $(K_1 \cup K_1)\vee T_{n-2}$ a {\em linear planar graph of the first kind} and  $K_2\vee T_{n-2}$ a {\em linear planar graph of the second kind}. 
By Corollary \ref{cor:planarstruct}, the maximum spread on planar graphs is achieved by either a double wheel, or a linear planar graph of the first kind, or a linear planar graph of the second kind. In the next few subsections, we will compute the maximum spread of graphs in these three families respectively, and then obtain the extremal graph attaining the maximum spread among all planar graphs. 

\subsection{Double wheel graph}
We first treat the first case of Corollary~\ref{cor:planarstruct}, the double wheel $(K_1 \cup K_1) \vee C_{n-2}$.
\begin{lemma}\label{lem:spreadwheel}
The spread of the double wheel graph $(K_1 \cup K_1) \vee C_{n-2}$ is \[\sqrt{8n-12} = 2\sqrt{2(n-2)}+\frac{1}{\sqrt{2(n-2)}} + O\left(\frac{1}{(2(n-2))^{\frac32}}\right).\]
\end{lemma}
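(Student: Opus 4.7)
The plan is to apply Lemma~\ref{lem:joinreglemma} directly to $G = 2K_1$ and $H = C_{n-2}$, since both are regular: $G$ is $0$-regular on $m = 2$ vertices with spectrum $\{0,0\}$, and $H$ is $2$-regular on $n-2$ vertices with spectrum $\{2\cos(2\pi j/(n-2)) : 0 \le j \le n-3\}$. By the lemma, the spectrum of $(K_1 \cup K_1) \vee C_{n-2}$ consists of the non-principal eigenvalues of each side, together with the two roots of
\[
(x - 0)(x - 2) - 2(n-2) = x^2 - 2x - 2(n-2) = 0,
\]
namely $x = 1 \pm \sqrt{2n - 3}$.

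Next I would identify $\lambda_1$ and $\lambda_n$. All non-principal eigenvalues of $2K_1$ equal $0$, and all non-principal eigenvalues of $C_{n-2}$ lie in $[-2, 2)$. For $n$ sufficiently large $\sqrt{2n-3} > 3$, so the largest root $1 + \sqrt{2n-3}$ strictly exceeds every eigenvalue coming from the regular parts, and the smallest root $1 - \sqrt{2n-3}$ is strictly less than $-2$. Therefore $\lambda_1 = 1 + \sqrt{2n-3}$ and $\lambda_n = 1 - \sqrt{2n-3}$, which yields the exact formula
\[
S\bigl((K_1 \cup K_1) \vee C_{n-2}\bigr) = \lambda_1 - \lambda_n = 2\sqrt{2n-3} = \sqrt{8n-12}.
\]

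Finally, the asymptotic form follows from a one-line Taylor expansion. Writing $m = 2(n-2)$ so that $8n-12 = 4m + 4$, one has
\[
\sqrt{8n-12} = 2\sqrt{m}\,\sqrt{1 + 1/m} = 2\sqrt{m} + \frac{1}{\sqrt{m}} + O\!\left(\frac{1}{m^{3/2}}\right),
\]
which is exactly the stated expansion. There is no real obstacle here: once Lemma~\ref{lem:joinreglemma} is invoked the argument is a short quadratic-root computation plus a standard $\sqrt{1+x}$ expansion.
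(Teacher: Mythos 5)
Your proof is correct and follows essentially the same route as the paper: apply Lemma~\ref{lem:joinreglemma} with $G=2K_1$ and $H=C_{n-2}$, solve the quadratic $x^2-2x-2(n-2)=0$ to get the extreme eigenvalues $1\pm\tfrac{1}{2}\sqrt{8n-12}$ (your $1\pm\sqrt{2n-3}$ is the same number), and finish with the standard $\sqrt{1+x}$ expansion. Your extra check that these two roots really are $\lambda_1$ and $\lambda_n$ (rather than some non-principal eigenvalue of $C_{n-2}$) is a small point the paper leaves implicit, but it does not change the argument.
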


\begin{proof}
By Lemma~\ref{lem:joinreglemma}, for $n$ sufficiently large, the largest and smallest eigenvalue of the graph $(K_1 \cup K_1) \vee C_{n-2}$ are $1+\frac{\sqrt{8n-12}}{2}$ and $1 -\frac{\sqrt{8n-12}}{2}$ respectively. Thus the spread is $\sqrt{8n-12}$.
We now give an asymptotic expansion for $\sqrt{8n-12}$ using the Taylor expansion for $(1+x)^{\frac12}$. We have
\begin{equation}
\begin{aligned}
    \sqrt{8n-12}&= \sqrt{(8n-16)+4}\\
    &=\sqrt{8n-16}\left(\sqrt{1+\frac{4}{8n-16}}\right)\\
    &=\sqrt{8n-16}\left(1+\frac12\left(\frac{4}{8n-16}\right)-\frac18\left(\frac{4}{8n-16}\right)^2+O\left(\frac{1}{(8n-16)^3}\right)\right)\\
    &=2\sqrt{2(n-2)}+\frac{1}{\sqrt{2(n-2)}}-\frac{1}{4(2(n-2))^{\frac32}}+O\left(\frac{1}{(8n-16)^{\frac52}}\right).
\end{aligned}
\label{eqn:wheelasymp}
\end{equation}
\end{proof}

\subsection{Linear planar graphs of the first kind}
Consider a linear planar graph of the first kind $G = (K_1 \cup K_1) \vee (P_{\ell_1} \cup P_{\ell_2} \cup \cdots P_{\ell_r})$. Let $u,w$ be the center vertices of $G$ and $v_1, \ldots, v_\ell$ be the vertices in order of a path component in $G - u - w$.  Note that there is a graph automorphism $\phi$ that maps $u$
to $w$. 
Let $\alpha$ be a normalized eigenvector (invariant under $\phi$) corresponding to an eigenvalue $\lambda$ of the adjacency matrix of $G$ so that $\alpha(u)=\alpha(w)=1$. 
 Let $A_\ell$ be the adjacency matrix of $P_\ell$, $x_i=\alpha(v_i)$ for $1\leq i \leq \ell$, and $\bx=(x_1,\ldots, x_\ell)'\in {\mathbb R}^{\ell}$.

Following along the same lines as the outerplanar case, we have the following lemmas and corollaries.
\begin{lemma} \label{l6}
    Let $G= (K_1 \cup K_1) \vee (P_{\ell_1} \cup P_{\ell_2} \cup \cdots \cup P_{\ell_r})$ be a linear planar graph on $n$ vertices with center vertices $u$ and $w$. Suppose $\lambda$ is an eigenvalue of $A(G)$ with $|\lambda|\geq 2$, and $\alpha$ is a normalized eigenvector of $A(G)$ corresponding to $\lambda$ such that $\alpha(u)=\alpha(w) = 1$. 
    Let $P_{\ell}$ be one of the path components of $G - u-w$ and let ${\bf x}$ and $A_{\ell}$ be defined as above.
    Then
\begin{equation}
    {\bf x}=2\sum_{k=0}^\infty \lambda^{-(k+1)} A_\ell^k \one.
\end{equation}
\end{lemma}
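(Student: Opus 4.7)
The plan is to mimic the proof of Lemma \ref{l1} almost verbatim, with one small modification reflecting the fact that there are now two center vertices instead of one. Each vertex $v_i$ on a path component $P_\ell$ of $G - u - w$ is adjacent to both $u$ and $w$, and by hypothesis $\alpha(u) = \alpha(w) = 1$. Consequently, when I restrict the eigen-equation $A(G)\alpha = \lambda\alpha$ to the coordinates $v_1,\ldots,v_\ell$, the contribution from outside the path doubles from $\mathbf{1}$ to $2\mathbf{1}$, giving
\[
A_\ell \mathbf{x} + 2\mathbf{1} = \lambda \mathbf{x}.
\]

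Next I would invert the operator $\lambda I - A_\ell$. Since $|\lambda| \geq 2 > 2\cos(\pi/(\ell+1)) = \lambda_1(A_\ell)$, the matrix $\lambda I - A_\ell$ is invertible, so
\[
\mathbf{x} = 2(\lambda I - A_\ell)^{-1}\mathbf{1} = 2\lambda^{-1}(I - \lambda^{-1} A_\ell)^{-1}\mathbf{1}.
\]
The Neumann series expansion converges under the same spectral bound, so I can expand
\[
\mathbf{x} = 2\lambda^{-1}\sum_{k=0}^\infty (\lambda^{-1} A_\ell)^k \mathbf{1} = 2\sum_{k=0}^\infty \lambda^{-(k+1)} A_\ell^k \mathbf{1},
\]
which is the desired identity.

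There is essentially no obstacle here: the argument is a one-paragraph calculation, structurally identical to Lemma \ref{l1}, and the only new feature is the factor of $2$ arising from having two center vertices. The spectral assumption $|\lambda| \geq 2$ is used in exactly the same place (to justify convergence of the Neumann series), and this is compatible with Lemma \ref{lem:planarlambdan} since the eigenvalues of interest $\lambda_1$ and $\lambda_n$ both have absolute value of order $\sqrt{n} \gg 2$ for large $n$.
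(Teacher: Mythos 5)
Your proof is correct and is exactly the argument the paper intends: the paper itself only remarks that Lemma \ref{l6} follows as in Lemma \ref{l1} with the eigen-equation $A_\ell\mathbf{x}+\one=\lambda\mathbf{x}$ replaced by $A_\ell\mathbf{x}+2\cdot\one=\lambda\mathbf{x}$, which is precisely what you carry out. The inversion of $\lambda I-A_\ell$ and the Neumann series convergence under $|\lambda|\geq 2>\lambda_1(A_\ell)$ match the paper's reasoning verbatim.
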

The proof is similar to the proof for Lemma \ref{l1}, but instead of $A_\ell \mathbf{x} + \one = \lambda \mathbf{x}$ in equation \ref{eq:c5}, we have $A_\ell \mathbf{x} + 2\cdot \one = \lambda \mathbf{x}$ in the case of two center vertices. The results below follow correspondingly.
\begin{corollary}\label{c2}
The following properties hold for any linear planar graph.
\begin{enumerate}
    \item If $\lambda\geq 2$, all entries of $\alpha$ are positive.
    \item If $\lambda\leq -2$,  all entries of $\alpha$ but the $u$-entry are negative.
\end{enumerate}
\end{corollary}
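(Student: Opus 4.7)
The plan is to port the argument of Corollary \ref{c1} almost verbatim, with Lemma \ref{l6} replacing Lemma \ref{l1}. Fix any path component $P_\ell = v_1 \cdots v_\ell$ of $G - u - w$. Lemma \ref{l6} supplies the convergent series
$$\mathbf{x} = 2\sum_{k=0}^\infty \lambda^{-(k+1)} A_\ell^k \mathbf{1},$$
and each entry of $A_\ell^k \mathbf{1}$ is a walk count in $P_\ell$, hence non-negative.

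For part (1) with $\lambda \geq 2$, every term is entry-wise non-negative and the $k=0$ term is strictly positive, so $\alpha(v_j) > 0$ for every $j$. Combined with the normalization $\alpha(u) = \alpha(w) = 1$ at the two center vertices, every entry of $\alpha$ is positive.

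For part (2) with $\lambda \leq -2$, the series alternates in sign. Exactly as in the proof of Corollary \ref{c1}(ii), the max-degree-$2$ bound inside $P_\ell$ together with $|\lambda| \geq 2$ forces $\{|\lambda|^{-(k+1)}(A_\ell^k \mathbf{1})_j\}_{k\geq 0}$ to be monotone non-increasing, so the alternating-series estimate gives
$$\alpha(v_j) = 2\sum_{k=0}^\infty \lambda^{-(k+1)}(A_\ell^k \mathbf{1})_j \;<\; \frac{2}{\lambda}(A_\ell^0 \mathbf{1})_j + \frac{2}{\lambda^2}(A_\ell \mathbf{1})_j \;\leq\; \frac{2}{\lambda} + \frac{4}{\lambda^2} \;\leq\; 0$$
whenever $\ell \geq 2$; the case $\ell = 1$ is immediate since only the $k=0$ term is nonzero and equals $2/\lambda < 0$. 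Applying this on each path component of $G-u-w$ forces $\alpha(v) < 0$ at every non-center vertex.

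The one point I would flag openly is a tension with the wording. The hypothesis fixes $\alpha(w) = 1 > 0$, so part (2) as literally worded — exempting only the $u$-entry — cannot hold at $w$. The only coherent reading, consistent with the $u \leftrightarrow w$ symmetry under the automorphism $\phi$ and with the exact outerplanar analogue Corollary \ref{c1}(ii), is that both center entries $\alpha(u)$ and $\alpha(w)$ are exempted, and that is exactly what the argument above proves. Beyond the factor-of-$2$ bookkeeping in the Taylor expansion coming from the two center vertices, I expect no new obstacles; the entire proof is a faithful translation of Corollary \ref{c1}.
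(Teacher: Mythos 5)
Your proposal is correct and matches the paper's intent exactly: the paper gives no separate proof of this corollary, stating only that it ``follows correspondingly'' from Lemma \ref{l6} by the same argument as Corollary \ref{c1}, which is precisely the translation (with the factor of $2$) that you carry out. Your observation that the statement should exempt the $w$-entry as well as the $u$-entry is a legitimate catch of a typo inherited from the one-center outerplanar version.
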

Consider a linear planar graph $G = H \vee (P_{\ell_1} \cup P_{\ell_2} \cup \cdots \cup P_{\ell_r})$ with $\ell_1, \ell_2 \geq 2$, and $H \in \{K_1\cup K_1, K_2\}$. Similar as in the case for outerplanar graphs, the merge operation replaces $G$ by $G' = H \vee (P_{\ell_1 + \ell_2 - 1} \cup P_{1} \cup \cdots \cup P_{\ell_r})$. Following along the same lines of the outerplanar case, we have that the merge operation increases the spread of a linear planar graph.
\begin{lemma} \label{lem:merge-planar}
  Let $G= H \vee (P_{\ell_1} \cup P_{\ell_2} \cup \cdots \cup P_{\ell_r})$ be a linear planar graph on $n\geq 8$ vertices with $\ell_1, \ell_2 \geq 2$ and $H \in \{K_1\cup K_1, K_2\}$. Let $G'$ be obtained from $G$ by applying a merge operation, i.e., $G'=H \vee (P_{\ell_1+\ell_2-1} \cup P_1\cup \cdots \cup P_{\ell_r})$  Then $S(G') > S(G)$. 
\end{lemma}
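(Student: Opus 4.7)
The plan is to mirror the outerplanar proofs of Lemmas \ref{l2} and \ref{l3}. I will separately establish $\lambda_1(G') > \lambda_1(G)$ and $\lambda_n(G') < \lambda_n(G)$; the conclusion $S(G') > S(G)$ follows immediately. The key structural difference from the outerplanar setting is the presence of two center vertices $u, w$, but the swap $\phi\colon u \leftrightarrow w$ is a graph automorphism, so the adjacency matrix decomposes according to the $\phi$-symmetric and $\phi$-antisymmetric subspaces. A direct calculation shows that the antisymmetric subspace is one-dimensional, spanned by the vector that is $1$ at $u$, $-1$ at $w$, and $0$ elsewhere; its sole eigenvalue is $0$ when $H = K_1 \cup K_1$ and $-1$ when $H = K_2$. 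Consequently, any eigenvector whose eigenvalue is neither $0$ nor $-1$ is automatically $\phi$-symmetric.

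For the $\lambda_1$ step: by Perron--Frobenius, the $\lambda_1$-eigenvector $\alpha$ has all entries positive, and since $\lambda_1 \notin \{0, -1\}$, it is $\phi$-symmetric, so we may normalize $\alpha(u) = \alpha(w) = 1$. Let $x_i$ (respectively $y_i$) be the entries of $\alpha$ on $P_{\ell_1}$ (respectively $P_{\ell_2}$); all are positive. Without loss of generality $x_1 \ge y_1$. Form a graph isomorphic to $G'$ by deleting the edge $v_1' v_2'$ and adding $v_1 v_2'$. The Rayleigh quotient \eqref{Rayleigh1} gives
\[
\lambda_1(G') \;\ge\; \frac{\alpha' A(G') \alpha}{\alpha' \alpha} \;=\; \frac{\alpha' A(G) \alpha + 2 y_2 (x_1 - y_1)}{\alpha' \alpha} \;\ge\; \lambda_1(G),
\]
and strict inequality holds because $\alpha$ cannot be a $\lambda_1(G')$-eigenvector by Lemma \ref{l6} applied to $G'$ (the merged path would force a different entry pattern).

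For the $\lambda_n$ step, I first show $\lambda_n(G) < -2$ for $n$ large. Using the test vector $\beta$ with $\beta(u) = \beta(w) = 1$ and $\beta(v) = -1/\sqrt{n-2}$ on the remaining vertices, a direct calculation from \eqref{Rayleighn} gives $\lambda_n(G) \le -\sqrt{2(n-2)} + O(1)$, which is below $-2$ for large $n$ in both kinds (the single possible edge $uw$ contributes only $O(1)$). Since $\lambda_n < -2 \notin \{0, -1\}$, the $\lambda_n$-eigenvector $\alpha$ is again $\phi$-symmetric, and we normalize $\alpha(u) = \alpha(w) = 1$. By Corollary \ref{c2}, all path-entries of $\alpha$ are negative. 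Without loss of generality $|x_1| \le |y_1|$, equivalently $x_1 \ge y_1$. Performing the identical edge swap,
\[
\lambda_n(G') \;\le\; \frac{\alpha' A(G') \alpha}{\alpha' \alpha} \;=\; \frac{\alpha' A(G) \alpha + 2 y_2 (x_1 - y_1)}{\alpha' \alpha} \;\le\; \lambda_n(G),
\]
since $y_2 < 0$ and $x_1 - y_1 \ge 0$. Strict inequality once again follows because $\alpha$ fails the $G'$-eigen-equation prescribed by Lemma \ref{l6}.

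The main obstacle is justifying the symmetric normalization $\alpha(u) = \alpha(w) = 1$ of the $\lambda_n$-eigenvector uniformly in both kinds of $H$; this is what the antisymmetric-subspace calculation is designed to handle. Once this is in place, Lemma \ref{l6} and Corollary \ref{c2} apply verbatim, and the edge-swap argument reduces to the outerplanar template of Lemmas \ref{l2} and \ref{l3}, yielding $S(G') = \lambda_1(G') - \lambda_n(G') > \lambda_1(G) - \lambda_n(G) = S(G)$.
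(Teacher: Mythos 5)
Your proof is correct and follows essentially the same route as the paper, which simply declares the argument ``nearly identical'' to Lemmas \ref{l2} and \ref{l3}: establish $\lambda_n(G)\leq -2$ with a test vector, use the sign information from Perron--Frobenius and Corollary \ref{c2}, and perform the same edge swap in the Rayleigh quotient for $\lambda_1$ and $\lambda_n$ separately. Your decomposition into $\phi$-symmetric and $\phi$-antisymmetric subspaces is a welcome extra detail, since it actually justifies the normalization $\alpha(u)=\alpha(w)=1$ that the paper merely asserts in the setup of Lemma \ref{l6}; the only blemish is that your particular test vector yields $\lambda_n\leq -\tfrac{4}{3}\sqrt{n-2}+O(1)$ rather than the stated $-\sqrt{2(n-2)}+O(1)$, which is still more than enough to conclude $\lambda_n<-2$ for large $n$.
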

The proofs are nearly identical to the proofs for Lemma \ref{l2} and \ref{l3}. We proceed to the main theorem for linear planar graphs of the first kind.
By Lemma \ref{lem:merge-planar}, we only need to consider the linear planar graph with only one non-trivial path. Let $G'_\ell = (K_1 \cup K_1) \vee (P_\ell \cap (n-2 - \ell)K_1)$. 

\begin{lemma}\label{lemma:spreadplanar1}
For sufficiently large $n$, the spread of $G'_\ell$ is given by
$$S(G'_\ell) = \lambda_1 - \lambda_n = 2\sqrt{2(n-2)} + \frac{2c'_2}{\sqrt{2(n-2)}} + \frac{2c'_4}{(2(n-2))^{\frac{3}{2}}} + \frac{2c'_6}{(2(n-2))^\frac{5}{2}} + O\left(\frac{1}{(2(n-2))^3}\right).$$
Here,
\begin{align*}
    c'_2 &= -\frac{3}{2}\left(\frac{l-2}{n-2}\right)^2 + 2\left(\frac{l-2}{n-2}\right),\\
    c'_4 &= -\frac{105}{8}\left(\frac{l-2}{n-2}\right)^4 + 35\left(\frac{l-2}{n-2}\right)^3 - 30\left(\frac{l-2}{n-2}\right)^2 + 2\left(\frac{l-2}{n-2}\right) + 2, \\
    c'_6 &= -\frac{3003}{16}\left(\frac{l-2}{n-2}\right)^6 + \frac{3003}{4}\left(\frac{l-2}{n-2}\right)^5 - 1155\left(\frac{l-2}{n-2}\right)^4 + 735\left(\frac{l-2}{n-2}\right)^3, \\
    &- 105\left(\frac{l-2}{n-2}\right)^2 - 28\left(\frac{l-2}{n-2}\right) - 12.
\end{align*}
\end{lemma}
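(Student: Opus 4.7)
The plan is to mirror the derivation used in the proof of Theorem~\ref{main}, but adapted to the setting of two (non-adjacent) center vertices $u,w$. First I would identify the eigenvector entries outside the unique nontrivial path. By the $u \leftrightarrow w$ automorphism of $G'_\ell$ we may normalize an eigenvector $\alpha$ corresponding to $\lambda \in \{\lambda_1,\lambda_n\}$ so that $\alpha(u)=\alpha(w)=1$. For an isolated vertex $v \in V(G'_\ell)\setminus\{u,w\}$ not lying on $P_\ell$, $v$ is adjacent exactly to $u$ and $w$, hence the eigen-equation at $v$ forces $\alpha(v)=2/\lambda$. For the vertices on $P_\ell$, Lemma~\ref{l6} directly gives $\mathbf{x}=2\sum_{k\ge 0}\lambda^{-(k+1)}A_\ell^k \mathbf{1}$.

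Next, I would write the eigen-equation at the center vertex $u$. Since $u$ is adjacent to every vertex of $V(G'_\ell)\setminus\{u,w\}$ (but not to $w$), one obtains
\begin{equation*}
    \lambda = (n-2-\ell)\frac{2}{\lambda} + \sum_{i=1}^\ell x_i
     = \frac{2(n-2-\ell)}{\lambda} + 2\sum_{k=0}^\infty \lambda^{-(k+1)}\mathbf{1}'A_\ell^k\mathbf{1}.
\end{equation*}
Multiplying through by $\lambda$ and absorbing the $k=0$ term ($\mathbf{1}'A_\ell^0\mathbf{1}=\ell$) into the constant part gives
\begin{equation*}
    \lambda^2 = 2(n-2) + 2\sum_{k=1}^\infty \lambda^{-k}\mathbf{1}'A_\ell^k\mathbf{1}.
\end{equation*}
The quantities $\mathbf{1}'A_\ell^k\mathbf{1}$ for $k=1,\dots,5$ have already been computed in the proof of Theorem~\ref{main}, and the tail is again $O(\ell/\lambda^6)$, so the equation for $\lambda$ is a perturbation of $\lambda^2 = 2(n-2)$ of exactly the same shape as equation \eqref{eigen_main}, only with $n-1$ replaced by $2(n-2)$ and with doubled coefficients on the lower-order terms.

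From here I would invoke Lemma~\ref{lem:laurent} to produce Laurent series expansions
\begin{equation*}
    \lambda_1 = \sqrt{2(n-2)} + c'_1 + \frac{c'_2}{\sqrt{2(n-2)}} + \cdots,
    \qquad
    \lambda_n = -\sqrt{2(n-2)} + c'_1 - \frac{c'_2}{\sqrt{2(n-2)}} + \cdots,
\end{equation*}
the sign pattern for $\lambda_n$ following from the symmetry $\lambda\mapsto -\lambda$ modulo replacing $(n-2-\ell)$ by itself (the equation is invariant up to sign flip of the odd-indexed terms). Plugging these ansätze into the displayed equation for $\lambda^2$ and matching coefficients order by order in powers of $(2(n-2))^{-1/2}$ yields explicit polynomial expressions for $c'_1,\dots,c'_6$ as polynomials in the ratio $\tfrac{\ell-2}{n-2}$. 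Subtracting gives $S(G'_\ell)=\lambda_1-\lambda_n = 2\sqrt{2(n-2)} + 2c'_2/\sqrt{2(n-2)} + 2c'_4/(2(n-2))^{3/2} + 2c'_6/(2(n-2))^{5/2} + O((n-2)^{-3})$, because the odd-indexed coefficients $c'_1,c'_3,c'_5$ cancel.

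The principal obstacle is not conceptual but computational: the coefficient-matching at orders up through $(n-2)^{-5/2}$ generates long polynomial identities, and the symbolic algebra has to be carried out carefully (e.g.\ in SageMath) to obtain the stated closed forms for $c'_2$, $c'_4$, and $c'_6$. Two minor points that require attention are (i) checking that the ``base'' term $2(n-2)$ is produced correctly when combining $2(n-2-\ell)$ with the $k=0$ contribution $2\ell$, and (ii) verifying that the infinite tail $\sum_{k\ge 6}\lambda^{-k}\mathbf{1}'A_\ell^k\mathbf{1}$ is indeed absorbed in the stated $O((2(n-2))^{-3})$ error term, which follows from the same bound $\mathbf{1}'A_\ell^k\mathbf{1}\le 2^k\ell$ used in the outerplanar case together with $|\lambda|\ge\sqrt{2(n-2)}-O(1)$.
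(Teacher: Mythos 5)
Your proposal follows essentially the same route as the paper's proof: the eigen-equation at an isolated vertex gives $\alpha(v)=2/\lambda$, the eigen-equation at a center vertex combined with Lemma~\ref{l6} and the walk counts $\one'A_\ell^k\one$ from Theorem~\ref{main} yields the perturbed equation $\lambda^2 = 2n-4+\frac{4\ell-4}{\lambda}+\cdots$, and Lemma~\ref{lem:laurent} plus SageMath coefficient matching produces the $c'_i$, with the odd-indexed terms cancelling in $\lambda_1-\lambda_n$. The derivation, including the tail bound $\one'A_\ell^k\one\le 2^k\ell$ and the sign-flip symmetry for the $\lambda_n$ expansion, matches the paper's argument.
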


\begin{proof}
 Let $\lambda$ be either $\lambda_1$ or $\lambda_n$. Let $\alpha$ be the normalized eigenvector associated with $\lambda$ such that $\alpha(u) = \alpha(w) = 1$, where $u,w$ are the center vertices in $G_\ell'$. Let $P_{\ell}$ be the unique non-trivial path in $G-u-w$ and $\mathbf{x} = (x_1, \ldots, x_\ell)$ be the vector of $\alpha$ restricted to the vertices of $P_\ell$. Let $v$ be a vertex in $G_\ell'$ that is not on $P_\ell$ and is not $u$ or $w$. The eigen-equation on $v$ gives that
$\lambda \alpha(v) = \sum_{y \sim v} \alpha(y) = \alpha(u) + \alpha(w) = 2$,
so $\alpha(v) = \frac{2}{\lambda}$. The eigen-equation at $u$ (and $w$) gives
\begin{equation} \label{e19}
    \lambda = \sum_{i = 1}^\ell x_i + (n - 2 - \ell) \frac{2}{\lambda}.
\end{equation}
Applying Lemma \ref{l6}, we get
\begin{align*}
    \sum_{i=1}^\ell x_i &= \one' \cdot \bx\\
    &= \one' \cdot 2\sum_{k=0}^\infty \lambda^{-(k+1)} A_\ell^k \one \\
    &= 2\sum_{k=0}^\infty \lambda^{-(k+1)} \one'A_\ell^k \one. 
\end{align*}
Plugging into Equation (\ref{e19}), we have
\begin{equation} \label{e20}
    \lambda = (n - 2 - \ell) \frac{2}{\lambda} + 2\sum_{k=0}^\infty \lambda^{-(k+1)} \one' A_\ell^k \one.
\end{equation}
By a similar argument in the proof of Theorem \ref{main}, we get
\begin{equation} \label{e21}
    \lambda^2 = 2n - 4 + \frac{4\ell - 4}{\lambda} + \frac{8\ell - 12}{\lambda^2} + \frac{16\ell - 32}{\lambda^3} + \frac{32\ell - 76}{\lambda^4} + \frac{64\ell - 176}{\lambda^5} + O\left(\frac{\ell}{\lambda^6}\right).
\end{equation}
This equation has two real roots which determines $\lambda_1$ and $\lambda_n$. $\lambda_1$ is near $\sqrt{2(n-2)}$ and $\lambda_n$ is near $-\sqrt{2(n-2)}$. By Lemma~\ref{lem:laurent} in the appendix, $\lambda$ has the following series expansion:
\begin{equation} \label{e22}
    \lambda_1 = \sqrt{2(n-2)} + c'_1 + \frac{c'_2}{\sqrt{2(n-2)}} + \frac{c'_3}{2(n-2)} + \frac{c'_4}{(2(n-2))^{\frac{3}{2}}} + \frac{c'_5}{(2(n-2))^2} + \frac{c'_6}{(2(n-2))^\frac{5}{2}} + O\left(\frac{1}{(2(n-2))^3}\right).
\end{equation}
Calculating this out in SageMath, we get

\begin{align*}
    c'_1 &= \frac{l-2}{n-2}, \\
    c'_2 &= -\frac{3}{2}\left(\frac{l-2}{n-2}\right)^2 + 2\left(\frac{l-2}{n-2}\right),\\
    c'_3 &= 4\left(\frac{l-2}{n-2}\right)^3 - 8\left(\frac{l-2}{n-2}\right)^2 + 4\left(\frac{l-2}{n-2}\right) + 2, \\
    c'_4 &= -\frac{105}{8}\left(\frac{l-2}{n-2}\right)^4 + 35\left(\frac{l-2}{n-2}\right)^3 - 30\left(\frac{l-2}{n-2}\right)^2 + 2\left(\frac{l-2}{n-2}\right) + 2, \\
    c'_5 &= 48\left(\frac{l-2}{n-2}\right)^5 -160\left(\frac{l-2}{n-2}\right)^4 + 192\left(\frac{l-2}{n-2}\right)^3 - 72\left(\frac{l-2}{n-2}\right)^2 - 8\left(\frac{l-2}{n-2}\right), \\
    c'_6 &= -\frac{3003}{16}\left(\frac{l-2}{n-2}\right)^6 + \frac{3003}{4}\left(\frac{l-2}{n-2}\right)^5 - 1155\left(\frac{l-2}{n-2}\right)^4 + 735\left(\frac{l-2}{n-2}\right)^3 \\
    &- 105\left(\frac{l-2}{n-2}\right)^2 - 28\left(\frac{l-2}{n-2}\right) - 12.
\end{align*}

Similarly, by Lemma~\ref{lem:laurent} in the appendix, we get the following series expansion of $\lambda_n$:
\begin{equation} \label{e23}
    \lambda_n = -\sqrt{2(n-2)} + c'_1 - \frac{c'_2}{\sqrt{2(n-2)}} + \frac{c'_3}{2(n-2)} - \frac{c'_4}{(2(n-2))^{\frac{3}{2}}} + \frac{c'_5}{(2(n-2))^2} - \frac{c'_6}{(2(n-2))^\frac{5}{2}} + O\left(\frac{1}{(2(n-2))^3}\right).
\end{equation}
Here, $c'_1,c'_2,c'_3,c'_4,c'_5,c'_6$ are the same as before. 
Observe that all $c'_i$s are bounded since they are polynomials of $\frac{\ell-2}{n-2}$, which is contained in the interval $[0,1]$. Thus, the spread of $G'_\ell$, denoted by $S(G'_\ell)$, can be expressed as:
  $$  S(G'_\ell) = \lambda_1 - \lambda_n = 2\sqrt{2(n-2)} + \frac{2c'_2}{\sqrt{2(n-2)}} + \frac{2c'_4}{(2(n-2))^{\frac{3}{2}}} + \frac{2c'_6}{(2(n-2))^\frac{5}{2}} + O\left(\frac{1}{(2(n-2))^3}\right).$$
\end{proof}

\subsection{Linear planar graphs of the second kind}
Consider the linear planar graphs of the second kind $G = K_2 \vee (P_{\ell_1} \cup P_{\ell_2} \cup \cdots P_{\ell_r})$. By Lemma \ref{lem:merge-planar}, the maximum spread among all linear planar graphs of the second kind can only be achieved by
$G''_\ell= K_2 \vee (P_{\ell} \cup (n-\ell-2) P_1)$.
Using the same method from Theorem~\ref{main} and Lemma~\ref{lemma:spreadplanar1}, we obtain that the spread of $G''_\ell$ is
\[S(G''_\ell) = 2\sqrt{2(n-2)} + \frac{2c''_2}{\sqrt{2(n-2)}} + O\left(\frac{1}{(2(n-2))^{\frac32}}\right),
\]
where $c''_2 = -\frac{3}{2}\left(\frac{l-2}{n-2}\right)^2 + \frac{3}{2}\left(\frac{l-2}{n-2}\right) + \frac{1}{8}$. 

\subsection{Proof of the main theorem}
\begin{lemma}\label{lem:firstkind}
   For sufficiently large $n$, a maximum-spread planar graph on $n$ vertices is a linear planar graph of the first kind.
\end{lemma}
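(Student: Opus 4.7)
The plan is to use Corollary \ref{cor:planarstruct} to reduce the problem to three candidate families---the double wheel $(K_1 \cup K_1) \vee C_{n-2}$, linear planar graphs of the first kind $G'_{\ell}$, and linear planar graphs of the second kind $G''_{\ell}$---and then compare the maximum spread attainable in each family at the level of the $\Theta(1/\sqrt{n})$ correction term. Since the leading term $2\sqrt{2(n-2)}$ is the same in all three families, the distinction must come entirely from the next-order coefficient.

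First, I would assemble the expansions already derived in the excerpt. By Lemma \ref{lem:spreadwheel}, the double wheel has spread $2\sqrt{2(n-2)} + \frac{1}{\sqrt{2(n-2)}} + O(1/(n-2)^{3/2})$. By Lemma \ref{lemma:spreadplanar1}, the first-kind spread equals $2\sqrt{2(n-2)} + \frac{2c'_2}{\sqrt{2(n-2)}} + O(1/(n-2)^{3/2})$ with $c'_2 = -\frac{3}{2}t^2 + 2t$ and $t = (\ell-2)/(n-2) \in [0,1]$, while the second-kind spread satisfies an analogous formula with $c''_2 = -\frac{3}{2}t^2 + \frac{3}{2}t + \frac{1}{8}$. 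A short calculus step gives $\max_{t \in [0,1]} c'_2 = 2/3$ attained at $t = 2/3$, and $\max_{t \in [0,1]} c''_2 = 1/2$ attained at $t = 1/2$. Choosing $\ell$ to be the integer closest to $2 + \tfrac{2}{3}(n-2)$ yields $|t - 2/3| = O(1/n)$, and since $c'_2$ is quadratic in $t$, this forces $c'_2 \geq 2/3 - O(1/n^2)$. Hence some $G'_{\ell}$ has spread at least $2\sqrt{2(n-2)} + \frac{4/3}{\sqrt{2(n-2)}} - O(1/n^{5/2})$.

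The final comparison is direct: the first-kind family achieves a $1/\sqrt{n}$-coefficient of $4/3$, whereas the double wheel has coefficient exactly $1$ and the best second-kind graph has coefficient at most $2 c''_2 = 1$. Since $4/3 > 1$ strictly, the gap between the first kind and either competitor is $\Theta(1/\sqrt{n})$, which dominates the $O(1/n^{3/2})$ error terms once $n$ is large enough, ruling out the double wheel and the second kind as extremal. The main point to watch, which I expect to be the only non-trivial obstacle, is that the asymptotic expansions in Lemma \ref{lemma:spreadplanar1} and its second-kind analogue are uniform in $\ell$. However, because the coefficients $c'_i$ and $c''_i$ are bounded polynomials in $t \in [0,1]$, the implicit constants in the error terms are uniform over all admissible $\ell$, so the leading-order comparison suffices to conclude.
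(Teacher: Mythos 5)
Your proposal is correct and follows essentially the same route as the paper: both reduce via Corollary \ref{cor:planarstruct} to the three families and compare the coefficient of the $1/\sqrt{2(n-2)}$ term, namely $2\max c'_2 = 4/3$ for the first kind against $1$ for both the double wheel and the second kind (since $\max c''_2 = 1/2$), with the $\Theta(1/\sqrt{n})$ gap absorbing the lower-order errors. Your explicit remark about uniformity of the error terms in $\ell$ is a point the paper leaves implicit (it follows from the $c'_i$ being bounded polynomials of $(\ell-2)/(n-2)$ on $[0,1]$), but the argument is otherwise identical.
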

\begin{proof}
Let $G_{\ell}'$ and $G_{\ell}''$ denote the linear planar graph of the first kind and second kind, respectively, such that the non-center vertices induce a linear forest with  a unique non-trivial path $P_{\ell}$.
Let $\ell_0 = \lceil \frac{2n - 2}{3} \rceil$ be the target maximum point of $S(G'_\ell)$. We have
\begin{equation}
    S(G'_{\ell_0}) = 2\sqrt{2(n-2)} + \frac{4}{3\sqrt{2(n-2)}} + O\left(\frac{1}{(2(n-2))^{3/2}}\right).
\end{equation}
\label{eqn:fspread}
For linear planar graphs of the second kind, observe that $c''_2\leq \frac{1}{2}$. Thus for sufficiently large $n$ and for any $\ell \in [n]$,
\begin{equation}\label{eqn:sfspread}
S(G''_\ell) \leq  2\sqrt{2(n-2)} +\frac{1}{\sqrt{2(n-2)}} + O\left(\frac{1}{(2(n-2))^{\frac32}}\right) < S(G'_{\ell_o}).
\end{equation}
Hence for sufficiently large $n$, the maximum spread of linear planar graphs of the second kind is less than
the maximum spread of linear planar graphs of the first kind.
Similarly, for the double wheel graph, by Lemma~\ref{lem:spreadwheel}, we have
\[S((K_1 \cup K_1)\vee C_{n-2}) = 2\sqrt{2(n-2)} + \frac{1}{\sqrt{2(n-2)}} + O\left(\frac{1}{(2(n-2))^{\frac32}}\right) < S(G_{\ell_0}').\]
This completes the proof of the lemma.
\end{proof}
We proceed with the proof of the main theorem for planar graphs.
\begin{proof}[Proof of Theorem~\ref{mainplanar}]
By Lemma~\ref{lem:firstkind}, the maximum-spread planar graph is a linear planar graph of the first kind with at most one non-trivial path.
Recall that the spread equation $S(G'_\ell)$ for linear planar graphs of the first kind is given by Theorem~\ref{lemma:spreadplanar1}.
Since
\begin{align*}
    c'_2(\ell) &= -\frac{3}{2}\left(\frac{\ell-2}{n-2}\right)^2 + 2\left(\frac{\ell-2}{n-2}\right) \\
    &= \frac{2}{3} - \frac{3}{2}\left(\frac{\ell-2}{n-2} - \frac{2}{3}\right)^2 \\
    &\leq \frac{2}{3},
\end{align*}
the function $c'_2$ reaches the maximum at $\ell_1 = \frac{2(n-2)}{3} + 2 = \frac{2n - 1}{3} + 1$. 
Let $\ell_0 = \lceil \frac{2n - 2}{3} \rceil$ be the target maximum point of $f(\ell)$. 

We claim that there is a constant $C$ such that all possible maximal points must be contained in the interval $(\ell_1 - C\sqrt{2(n-2)}, \ell_1 + C\sqrt{2(n-2)})$. 
Otherwise, for any $\ell$ not in this interval, we have
$$c'_2(\ell)\leq \frac{2}{3}-\frac{3C^2}{2(n-1)}.$$
This implies
\[
    S(G'_\ell) \leq 2\sqrt{2(n-2)} + 2\frac{\frac{2}{3}-\frac{3C^2}{2(n-1)}}{\sqrt{2(n-2)}} + O\left(\frac{1}{(2(n-2))^{3/2}}\right) < S(G'_{\ell_0}).
\]
Here, we choose $C$ big enough  such that
\[
    -\frac{3C^2}{\sqrt{2}(n-2)^{3/2}} + O\left(\frac{1}{(2(n-2))^{3/2}}\right) < 0,
\]
which proves the claim. \\
From now on, we can assume $\ell \in (\ell_1 - C\sqrt{2(n-2)}, \ell_1 + C\sqrt{2(n-2)})$. Next, we compute $S(G'_{\ell+1}) - S(G'_{\ell}).$ We have
\begin{align*}
    c'_2(\ell + 1) - c'_2(\ell) &= -\frac{3}{2}\left(\frac{2\ell - 3}{(n - 2)^2}\right) + 2\left(\frac{1}{n - 2}\right) = \frac{2(n - 2) - \frac{3}{2}(2\ell - 3)}{(n - 2)^2}, \\
    c'_4(\ell + 1) - c'_4(\ell) &= -\frac{105}{8}\left(\frac{4\ell^3 - 18\ell^2 + 28\ell - 15}{(n-2)^4}\right) + 35\left(\frac{3\ell^2 - 9\ell + 7}{(n-2)^3}\right) - 30\left(\frac{2\ell - 3}{(n-2)^2}\right) + 2\left(\frac{1}{n-2}\right), \\
    c'_6(\ell + 1) - c'_6(\ell) &= O\left(\frac{1}{n-2}\right).
\end{align*}
Plugging in $\ell = \ell_1 \cdot \left(1 + O\left(\frac{1}{\sqrt{2(n-2)}}\right)\right)$ into $c'_4(\ell + 1) - c'_4(\ell)$, we have
\begin{align*}
    c'_4(\ell + 1) - c'_4(\ell) &= \frac{1}{n - 2} \left( -\frac{105}{8} \cdot 4 \cdot \left( \frac{2}{3} \right)^3 + 35 \cdot 3 \left(\frac{2}{3}\right)^2 - 30 \cdot 2 \cdot \left(\frac{2}{3}\right) + 2 + O\left( \frac{1}{\sqrt{2(n-2)}} \right) \right)\\
    &= -\frac{62}{9(n-2)} + O\left(\frac{1}{\sqrt{2}(n-2)^{3/2}}\right).
\end{align*}
Therefore, we have
\begin{align*}
    S(G'_{\ell + 1}) - S(G'_\ell) &= \frac{2(c'_2(\ell + 1) - c'_2(\ell))}{\sqrt{2(n-2)}} + \frac{2(c'_4(\ell + 1) - c'_4(\ell))}{(2(n-2))^{\frac{3}{2}}} + \frac{2(c'_6(\ell + 1) - c'_6(\ell))}{(2(n-2))^\frac{5}{2}} + O\left(\frac{1}{(2(n-2))^3}\right) \\
    &= \frac{4(n - 2) - 3(2\ell - 3)}{\sqrt{2}(n - 2)^{5/2}} -\frac{62}{9\sqrt{2}(n-2)^{5/2}} + O\left( \frac{1}{(n-2)^3}\right) \\
    &= \frac{4n - 6\ell - \frac{53}{9}}{\sqrt{2}((n-2))^{5/2}} + O\left( \frac{1}{(n-2)^3}\right).
\end{align*}
When $\ell \geq \ell_0$, we have
\[
    4n - 6\ell - \frac{53}{9} \leq 4n - 6\ell_0 - \frac{53}{9} \leq 4n - 6 \cdot \frac{2n - 2}{3} - \frac{53}{9} = -\frac{17}{9} < 0.
\]
It follows that $S(G'_{\ell + 1}) - S(G'_\ell) < 0$. \\
When $\ell \leq \ell_0 - 1$, we have
\[
    4n - 6\ell - \frac{53}{9} \geq 4n - 6(\ell_0 - 1) - \frac{53}{9} \geq 4n - 6\cdot\left( \frac{2n - 2}{3} - \frac{1}{3} \right) - \frac{53}{9} = \frac{1}{9} > 0.
\]
It follows that $S(G'_{\ell + 1}) - S(G'_\ell) > 0$. 
This shows that $\ell_0$ is the unique maximal point for $S(G'_\ell)$. This completes the proof of the theorem.
\end{proof}

\section{Appendix}
Note that Equations \eqref{eigen_main} and \eqref{e21} are quite similar. We have the following lemma on the existence of Laurent series of their solutions. Here we let $N=n-1$ for  Equation  \eqref{eigen_main} and $N=2n-4$ for Equation \eqref{e21}.
\begin{lemma}\label{lem:laurent}
Suppose that $\lambda$ satisfies the equation
\begin{equation} \label{eq:A1}
\lambda^2= N +\sum_{i=1}^\infty \frac{a_i}{\lambda^i},
\end{equation}
where $a_i=p_i \ell +q_i$'s are a linear function of $\ell$ for some constants $p_i$ and $q_i$ for each $i \in \mathbb{N}$.
Then $\lambda$ has the following Laurent series expansion in term of $\frac{1}{\sqrt{N}}$ near $\sqrt{N}$:
\begin{equation}
\lambda=\sqrt{N} +\sum_{i=1}^\infty c_i N^{-(i-1)/2},
\end{equation}
 and the following Laurent series expansion in term of $\frac{1}{\sqrt{N}}$ near $-\sqrt{N}$:
 \begin{equation}
\lambda=-\sqrt{N} +\sum_{i=1}^\infty (-1)^{i-1} c_i N^{-(i-1)/2},
\end{equation}
where the $c_i$'s are polynomials of $\frac{\ell}{N}$ of degree at most $i$.
\end{lemma}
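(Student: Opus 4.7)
The plan is a change of variables that reduces the equation to implicit-function-theorem form, followed by an inductive matching of powers of $s := 1/\sqrt{N}$, and a symmetry argument for the second branch.

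\textbf{Step 1 (change of variables).} Set $s = N^{-1/2}$ and $r = \ell/N$, and substitute $\lambda = \mu/s$, so that the sought expansion $\lambda = \sqrt{N} + \sum_{i\geq 1} c_i(r) N^{-(i-1)/2}$ corresponds to a power series $\mu(s;r) = 1 + \sum_{i\geq 1} c_i(r) s^i$ in $s$ with $\mu(0;r)=1$. Using $a_i = p_i\ell + q_i = p_i r/s^2 + q_i$ and multiplying \eqref{eq:A1} by $s^2$ yields the equivalent equation
\[
\mu^2 \;=\; 1 + \sum_{i=1}^\infty \bigl(p_i r\, s^i + q_i\, s^{i+2}\bigr)\mu^{-i}.
\]

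\textbf{Step 2 (existence of the branch).} View the equation above as $F(\mu,s;r)=0$. At $s=0$ it reduces to $\mu^2=1$, with simple root $\mu=1$ and $\partial_\mu F(1,0;r)=2\neq 0$. In both intended applications the coefficients satisfy $|a_i| = O(2^i\ell)$, so the right-hand side is a convergent analytic series in $(\mu^{-1},s)$ for $|\lambda|$ large, and the analytic implicit function theorem produces a unique analytic branch $\mu(s;r) = 1 + \sum_{i\geq 1}c_i(r)s^i$ for small $s$. Matching coefficients of $s^i$ determines each $c_i(r)$ uniquely.

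\textbf{Step 3 (degree bound).} I would prove by strong induction on $k$ that $c_k(r)$ is a polynomial in $r$ of degree at most $k$. Extracting the coefficient of $s^k$ from both sides gives
\[
2c_k + \sum_{\substack{i+j=k\\ i,j\geq 1}} c_i c_j \;=\; p_k\, r + \sum_{i=1}^{k-1} p_i r \cdot [s^{k-i}]\mu^{-i} + \sum_{i=1}^{k-2} q_i \cdot [s^{k-i-2}]\mu^{-i},
\]
where $[s^j]\mu^{-i}$ denotes the coefficient of $s^j$ in the formal expansion of $\mu^{-i}$. Expanding $\mu^{-i} = (1+\sum_{l\geq 1}c_l s^l)^{-i}$ shows that $[s^j]\mu^{-i}$ is a universal polynomial whose monomials have the form $c_{j_1}\cdots c_{j_m}$ with $j_1+\cdots+j_m = j$; by the inductive hypothesis each monomial has $r$-degree at most $\sum j_l = j$. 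Hence the $p_i$-contribution for $i\leq k$ has $r$-degree at most $(k-i)+1 \leq k$, the $q_i$-contribution has $r$-degree at most $k-i-2\leq k-3$, and the convolution $\sum c_i c_j$ on the left has $r$-degree at most $k$. Solving for $c_k$ yields a polynomial in $r$ of degree at most $k$, completing the induction.

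\textbf{Step 4 (the branch near $-\sqrt{N}$).} The equation in Step 1 is invariant under the joint substitution $(s,\mu)\mapsto(-s,-\mu)$: replacing $s$ by $-s$ multiplies the $i$th summand on the right by $(-1)^i$, and replacing $\mu$ by $-\mu$ multiplies it by another $(-1)^i$ via $\mu^{-i}$. Therefore $\nu(s;r) := -\mu(-s;r)$ is also an analytic solution, with $\nu(0;r)=-1$. Reading off coefficients,
\[
\nu(s) = -\mu(-s) = -1 + \sum_{i\geq 1}(-1)^{i-1} c_i(r)\, s^i,
\]
so that $\lambda = \nu/s = -\sqrt{N} + \sum_{i\geq 1}(-1)^{i-1}c_i(r)\, N^{-(i-1)/2}$, which is precisely the claimed expansion near $-\sqrt{N}$.

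The main obstacle is the degree bookkeeping in Step 3, since $[s^j]\mu^{-i}$ is an arbitrarily complicated polynomial in $c_1,\ldots,c_j$. The crucial point is the grading ``$c_{j_l}$ has $r$-degree at most $j_l$,'' together with the constraint $\sum j_l = j$ inherited from the formal inverse of $\mu$, which is exactly what prevents the degree from blowing up past $k$. The symmetry argument in Step 4 then sidesteps any repeated computation, provided one verifies the $(s,\mu)\mapsto(-s,-\mu)$ invariance carefully.
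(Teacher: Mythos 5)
Your proposal is correct, and it reaches the lemma by a genuinely different route from the paper. The paper also substitutes $x=1/\lambda$, $z=1/\sqrt{N}$, but then inverts the relation $z=x/\phi(x)$ with $\phi(x)=\sqrt{1-\sum_i a_i x^{i+2}}$ explicitly via the Lagrange--B\"urmann formula, takes the reciprocal of the resulting series, and obtains the degree bound by tracking which monomials $a_{i_1}\cdots a_{i_s}$ can appear at order $N^{-(j-1)/2}$: the weight constraint $j\ge 2s+\sum_t i_t$ forces a factor $(\ell/N)^s$ to occur only in $c_j$ with $j\ge s$. You instead get existence from the implicit function theorem applied to $F(\mu,s)=0$ at the simple root $(\mu,s)=(1,0)$, and prove the degree bound by strong induction on the recurrence $2c_k+\sum_{i+j=k}c_ic_j=p_kr+\sum_{i<k}p_ir\,[s^{k-i}]\mu^{-i}+\sum_i q_i[s^{k-i-2}]\mu^{-i}$, using the grading $\deg_r c_{j_l}\le j_l$ together with $\sum_l j_l=j$ for the monomials of $[s^j]\mu^{-i}$; this is cleaner and more self-contained, avoiding Lagrange inversion entirely, at the cost of being less explicit about the shape of the coefficients. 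Your symmetry step is the same idea as the paper's (the paper uses $z\mapsto -z$ with $x$ fixed, which is equivalent to your $(s,\mu)\mapsto(-s,-\mu)$). Two minor remarks: your $c_1=p_1r/2$ differs from the paper's reported $c_1=(\ell-1)/(n-1)$ by a term of order $N^{-1}$, which is merely a different (equally valid) regrouping of the non-unique two-parameter expansion, not an error; and your appeal to analyticity in Step 2 needs the growth bound $p_i,q_i=O(2^i)$, which you correctly note holds in both applications (a purely formal power-series argument, which is all Step 3 actually uses, would also suffice and would match the paper's level of rigor).
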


\begin{proof}
Let $x=\frac{1}{\lambda}$ and $z^2=\frac{1}{N}$. Equation \eqref{eq:A1} can be rewritten as
\begin{equation} \label{eq:A2}
\frac{1}{z^2}= \frac{1}{x^2} - \sum_{i=1}^\infty a_ix^i.
\end{equation}
Thus
\begin{align*}
z &= \pm \left(\frac{1}{x^2} - \sum_{i=1}^\infty a_ix^i\right)^{-\frac{1}{2}} \\
&= \pm x \left(1 - \sum_{i=1}^\infty a_ix^{i+2}\right)^{-\frac{1}{2}} \\
&=\pm \frac{x}{\phi(x)}.
\end{align*}
Here $\phi(x)= \sqrt{1 - \sum_{i=1}^\infty a_ix^{i+2}}$. We have the following Taylor expansion of $\phi(x)$.
\begin{align}
  \phi(x) &=   \sqrt{1 - \sum_{i=1}^\infty a_ix^{i+2}} \\
  &=1+\sum_{k=1}^\infty \binom{\frac12}{k} \left(-\sum_{i=1}^\infty a_ix^{i+2}\right)^k   \label{eq:ai}\\
  &=1+ \sum_{j=3}^\infty b_j x^j, \label{eq:bi}
\end{align}
where $b_j$ are multi-variable polynomials of the $a_i$'s.

Consider the case $z>0$ first. Let $x=g(z)=\sum_{i=1}d_iz^i$ be the Taylor series of the inverse function in the equation
$z=\frac{x}{\phi(x)}$.
The Lagrange-B\"urmann formula states
$$d_n=[z^n]g(z)=\frac{1}{n}[x^{n-1}]\phi(x)^n.$$
Here $[x^r]$ is an operator which extracts the coefficient of $x^r$ in the Taylor series of a function of $x$. Note that $d_0=0$ and $d_1=1$.

Now taking the reciprocal of both sides, we have
\begin{align}
\lambda&=\frac{1}{x}\\
&=\frac{1}{z}\left(1+\sum_{i=1}^\infty d_{i+1}z^{i}\right)^{-1} \label{eq:di}\\
&=\frac{1}{z}\left(1+ \sum_{j=1}^\infty(-1)^j\left( \sum_{i=1}^\infty d_{i+1}z^i\right)^j\right)\\
&= \frac{1}{z}\left(1+  \sum_{j=1}^\infty f_j z^j\right) \label{eq:fj1} \\
&=\sqrt{N} +\sum_{i=1}^\infty f_i N^{-(i-1)/2},  \label{eq:fj}
\end{align}
where in \eqref{eq:fj}, the $f_i$s are multi-variable polynomials of the $d_i$s; in \eqref{eq:di} the $d_i$s are  multi-variable polynomials of the $b_i$s;
and the $b_i$s are multi-variable polynomials of the $a_i$s. Hence, the $f_i$s are multi-variable polynomials of the $a_i$s. 
Consider a typical monomial $a_{i_1}a_{i_2}\cdots a_{i_s}$ (with $a_1\leq a_2\leq \cdots \leq a_s$). 
By \eqref{eq:ai} and \eqref{eq:bi}, the monomial $a_{i_1}a_{i_2}\cdots a_{i_s}$ appears in $f_jz^j$ of \eqref{eq:fj1} only if
$$j\geq \sum_{t=1}^s (a_{i_t}+2) = 2s + \sum_{t=1}^s i_t$$
since $b_1=b_2=0$.
We have
\begin{equation}\label{eq:monomial-terms}
    a_{i_1}a_{i_2}\cdots a_{i_s} N^{-(j-1)/2}=
N^{-(j-1-2s)/2}
\prod_{t=1}^s \frac{a_{i_t}}{N}
=N^{-(j-1-2s)/2}\prod_{t=1}^s(p_{i_t} \frac{\ell}{N}+\frac{q_{i_t}}{N}).
\end{equation}

Expanding and grouping all items in $f_jN^{-(j-1)/2}$
in the Laurent series of $\lambda$, we can rewrite it as
\begin{equation}
    \lambda= \sqrt{N}+ \sum_{i=1}^\infty c_iN^{-(i-1)/2} 
\end{equation}
where $c_i$ is a polynomial of $\frac{\ell}{N}$. 
Note that in \eqref{eq:monomial-terms}, since $(j-1-2s)\geq \sum_{t=1}^s i_t-1\geq s-1$, a term $\left(\frac{\ell}{N}\right)^s$ can only appear
in $c_j$ for $j\geq s$. Thus, the degree of $c_i$ is at most $i$. 

By the symmetry between $z$ and $-z$ in \eqref{eq:A2}, we get the Laurent series expansion in term of $\frac{1}{\sqrt{N}}$ near $-\sqrt{N}$:
\begin{align*}
    \lambda&= \frac{1}{-z}\left(1+  \sum_{j=1}^\infty c_j (-z)^j\right)\\
    &=-\sqrt{N} +\sum_{i=1}^\infty (-1)^{i-1} c_i N^{-(i-1)/2}.
\end{align*}

\end{proof}
\end{document}